\DeclareSymbolFont{cyrletters}{OT2}{wncyr}{m}{n}
\numberwithin{equation}{section} \numberwithin{figure}{section}
\DeclareMathOperator{\Gal}{Gal} 
\DeclareMathOperator{\Spec}{Spec}
 \DeclareMathOperator{\re}{Re}
\DeclareMathOperator{\vol}{vol} 
 \DeclareMathOperator{\Val}{Val}
\DeclareMathOperator{\Br}{Br} 
\DeclareMathOperator{\inv}{inv} \DeclareMathOperator{\res}{\partial}
\DeclareMathOperator{\Norm}{N} \DeclareMathOperator{\Frob}{Frob}
 \DeclareMathOperator{\dens}{dens}
\DeclareMathOperator{\HH}{H}
\DeclareMathOperator{\lcm}{lcm}
\DeclareMathOperator{\Li}{Li}
\DeclareSymbolFont{cyrletters}{OT2}{wncyr}{m}{n}
\DeclareMathSymbol{\Sha}{\mathalpha}{cyrletters}{"58}
\DeclareMathSymbol{\Be}{\mathalpha}{cyrletters}{"42}
\newcommand{\OO}{\mathcal{O}}
\newcommand{\xx}{\mathbf{x}}
\newcommand{\yy}{\mathbf{y}}
\renewcommand{\1}{\mathbf{1}}
\newcommand\FF{\mathbb{F}}
\newcommand\PP{\mathbb{P}}
\newcommand\ZZ{\mathbb{Z}}
\newcommand\NN{\mathbb{N}}
\newcommand\QQ{\mathbb{Q}}
\newcommand\RR{\mathbb{R}}
\newcommand\CC{\mathbb{C}}
\newcommand\GG{\mathbb{G}}
\newcommand\Gm{\GG_\mathrm{m}}
\newcommand{\Adele}{\mathbf{A}}
\newcommand{\br}{\mathscr{B}}
\newcommand{\fK}{\mathfrak{K}}
\newcommand{\bv}{\boldsymbol{v}}
\newcommand{\eps}{\varepsilon}
\newcommand{\fp}{\mathfrak{p}}
\newcommand{\fq}{\mathfrak{q}}
\newcommand{\fa}{\mathfrak{a}}
\newcommand{\bx}{\mathbf{x}}
\newcommand{\by}{\mathbf{y}}
\newcommand{\ba}{\boldsymbol{a}}
\newcommand{\Mod}[1]{\;(\operatorname{mod}\,#1)}
\newtheorem{lemma}{Lemma}
\newtheorem{theorem}[lemma]{Theorem}
\newtheorem{proposition}[lemma]{Proposition}
\newtheorem{corollary}[lemma]{Corollary}
\theoremstyle{definition}
\newtheorem{example}[lemma]{Example}
\newtheorem{definition}[lemma]{Definition}
\newtheorem{remark}[lemma]{Remark}
\numberwithin{lemma}{section}
\newcommand{\dan}[1]{{\color{blue} \sf $\clubsuit\clubsuit\clubsuit$ Dan: [#1]}}
\begin{document}

\title[Frobenian multiplicative functions and rational points]{Frobenian multiplicative functions and rational points in fibrations}

\author{Daniel Loughran}
\address{
Department of Mathematical Sciences \\
University of Bath \\
Claverton Down \
Bath\\ 
BA2 7AY\\
UK.}

\author{\sc Lilian Matthiesen}
\address{Lilian Matthiesen \\
KTH\\
Department of Mathematics\\
Lindstedtsv\"{a}gen 25\\
10044 Stockholm\\
Sweden}
\email{lilian.matthiesen@math.kth.se}

\subjclass[2010]
{14G05 (primary), 
14D10, 
11N37. 
(secondary)}

\begin{abstract}
	We consider the problem of counting the number of varieties in a family over $\QQ$ 
	with a rational point. We obtain lower bounds for this counting problem for  some families over $\PP^1$, even if the Hasse principle fails.
	We also obtain sharp results for some multinorm equations and 
	for specialisations of certain Brauer group elements on higher dimensional
	projective spaces, where we answer some cases of a question of Serre. 
	Our techniques come from arithmetic geometry and additive combinatorics.
\end{abstract}

\maketitle

\thispagestyle{empty}

\tableofcontents

\section{Introduction} \label{sec:intro}

\subsection{Rational points in fibrations} \label{sec:P1}

Let $V$ be a smooth projective variety over $\QQ$ equipped with a dominant morphism $\pi:V \to \PP^n$.
In this paper we are interested in the function
\begin{equation} \label{def:N}
	N(\pi,B) =\#\{x \in \PP^n(\QQ): H(x) \leq B, x \in \pi(V(\QQ))\}
\end{equation}
which counts the number of varieties in the family which have a rational point.
Here $H$ denotes the usual naive height on projective space, defined via
$$H(x_0:\cdots:x_n) = \max\{|x_0|,\ldots, |x_n|\},$$
whenever $(x_0,\ldots,x_n)$ is a primitive integer vector.
Such counting functions have been studied by numerous authors in recent times \cite{Bha14b,BBL15,Lou13, LS16, Ser90,Sof16}.
In \cite{LS16}, upper bounds were obtained for the closely related function
\begin{equation} \label{def:N_loc}
	N_{\mathrm{loc}}(\pi,B) =\#\{x \in \PP^n(\QQ): H(x) \leq B, x \in \pi(V(\Adele_\QQ))\}
\end{equation}
which counts the number of varieties in the family which are everywhere locally soluble. (Here $\Adele_\QQ$ denotes the adeles of $\QQ$.)
These upper bounds take the shape
\begin{equation} \label{eqn:LS}
	N_{\mathrm{loc}}(\pi,B) \ll \frac{B^{n+1}}{(\log B)^{\Delta(\pi)}}
\end{equation}
for a certain factor $\Delta(\pi)$. This generalised work of Serre \cite{Ser90}, which applied to the special case where the generic fibre of $\pi$ is a conic.

Both Serre \cite{Ser90} and the authors of \cite{LS16} asked whether this upper bound is sharp, under the necessary assumption that the set being counted is non-empty. In fact, since the singular locus forms a Zariski closed subset of $\PP^n$, for the bound \eqref{eqn:LS} to be sharp there must be a \emph{smooth} fibre which is everywhere locally soluble.

In this paper we prove numerous results answering this in the affirmative. To state our most general results, we require various conditions on the singular fibres of $\pi$ in terms of the Galois action on the irreducible components. This terminology (split/non-split/pseudo-split/non-pseudo-split fibre), as well as the definition of $\Delta(\pi)$, is recalled in \S\ref{sec:notation}.
Our first result is the following.

\begin{theorem} \label{thm:Serre}
	Let $V$ be a smooth projective variety over $\QQ$ with a morphism $\pi:V \to \PP^1$
	whose generic fibre is geometrically integral.
	Assume that each fibre of $\pi$ contains an irreducible component of multiplicity $1$ and that each
	non-pseudo-split fibre of $\pi$ lies over a rational point.
	Assume that there is a fibre of $\pi$ over some rational point which is smooth and everywhere locally soluble.
	Then
	$$N_{\mathrm{loc}}(\pi,B) \asymp \frac{B^2}{(\log B)^{\Delta(\pi)}}.$$
\end{theorem}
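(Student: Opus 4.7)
The upper bound $N_{\mathrm{loc}}(\pi,B) \ll B^2/(\log B)^{\Delta(\pi)}$ is already supplied by \eqref{eqn:LS}, so the task is to prove the matching lower bound. Fix a rational point $x_0 \in \PP^1(\QQ)$ with smooth, everywhere locally soluble fibre, and an integral model of $\pi$ over $\ZZ$. Parametrising rational points as $x=(a:b)$ with coprime $(a,b) \in \ZZ^2$ and $\max(|a|,|b|) \leq B$, my plan is to restrict $(a,b)$ to a small $p$-adic neighbourhood of $x_0$ at each of the finitely many ``bad'' primes (the archimedean place, primes of bad reduction, and primes dividing the discriminants of the linear forms below). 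By Hensel's lemma applied to a smooth $\QQ_p$-point of $V_{x_0}$, local solubility at every bad place is then preserved for all $(a,b)$ in the chosen adelic region $\mathcal R$, leaving only the good primes to control.

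At a good prime $p$, the Lang--Weil estimate together with Hensel's lemma (applied to a split irreducible component of multiplicity one in the fibre at $\bar x$) guarantees $V_x(\QQ_p) \neq \emptyset$ unless $\bar x$ coincides modulo $p$ with the reduction of a singular point of $\pi$ whose fibre is non-pseudo-split. By hypothesis all such singular points are rational, say $P_1,\ldots,P_r$, and each is cut out by a primitive integer linear form $L_i(a,b)$. Thus the only non-trivial local-solubility constraints arise at primes $p \mid L_i(a,b)$, and a standard analysis of models near a non-pseudo-split fibre (in the spirit of \cite{LS16}) shows that $V_x(\QQ_p) \neq \emptyset$ at such $p$ iff $\Frob_p$ lies in a non-empty union of conjugacy classes $H_i \subset \Gal(K_i/\QQ)$ inside a fixed finite Galois extension $K_i/\QQ$, of relative density $\delta_i = |H_i|/[K_i:\QQ] \in (0,1)$.

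The problem is thereby reduced to producing a lower bound of the shape $B^2/(\log B)^{\sum_i(1-\delta_i)}$ for
\[
S(B) \;=\; \sum_{\substack{(a,b) \in \mathcal R \cap \ZZ^2 \\ \gcd(a,b)=1}} \prod_{i=1}^{r} g_i\bigl(L_i(a,b)\bigr),
\]
where each $g_i$ is a non-negative Frobenian multiplicative function with $g_i(p)=\mathbf{1}_{\Frob_p \in H_i}$. A matching lower bound for such correlation sums of Frobenian multiplicative functions on linear forms is the principal analytic content of the paper, developed by combining a Green--Tao-style transference with nilsequence equidistribution in the spirit of Browning--Matthiesen. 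Accepting this input, the asymptotic $S(B) \gg B^2/(\log B)^{\sum_i(1-\delta_i)}$ follows, and a direct bookkeeping with the definition of $\Delta(\pi)$ from \S\ref{sec:notation} identifies $\sum_i(1-\delta_i)$ with $\Delta(\pi)$, since pseudo-split singular fibres contribute zero.

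The main obstacle is precisely this sharp lower bound for $S(B)$. Upper bounds of the right order follow from a routine Selberg sieve comparison with $r$-fold divisor moments, but the matching lower bound demands the full additive-combinatorial machinery, since for small $\delta_i$ the function $g_i$ is supported on a sparse set of primes, and the several linear forms can correlate in non-trivial ways. Handling the coprimality condition and the restriction to the sparse adelic region $\mathcal R$ (as opposed to a density-$1$ region) add further layers of technical difficulty, and require the Frobenian framework to be robust under twisting by archimedean and bad-prime constraints.
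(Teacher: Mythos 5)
Your proposal follows essentially the same route as the paper: detector functions $\varpi_\theta$ built from the frobenian condition that $\Frob_p$ fixes a multiplicity-one component, local conditions pinned down near a fixed smooth everywhere-locally-soluble fibre, reduction to a correlation sum of frobenian multiplicative functions over linear forms, and then the analytic input (Theorem \ref{thm:frob} via Theorem \ref{thm:frob_applications}) to produce the asymptotic. One small overstatement to flag: you claim $V_x(\QQ_p)\neq\emptyset$ \emph{iff} $\Frob_p \in H_i$ at primes $p \mid L_i(a,b)$, but the model-theoretic argument only yields the implication $\Frob_p \in H_i \Rightarrow V_x(\QQ_p)\neq\emptyset$ (together with the observation that a pseudo-split $\FF_p$-scheme is split, so primes not dividing any $L_\theta$ are automatically unconstrained); this one-sided implication is all the lower bound needs, and the paper indeed only proves that direction.
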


Note that we make no geometric assumptions on the smooth members of the family. In this generality it is the first result of its kind in the literature which gives sharp lower bounds when $\Delta(\pi) > 0$. The case $\Delta(\pi) = 0$ is proved in \cite[Thm.~1.3]{LS16}, and all other results in the literature with $\Delta(\pi) > 0$ concern special classes of varieties, e.g.~conics or norm forms.

Let us give some special cases highlighting our result.
If the smooth fibres of $\pi$ satisfy the Hasse principle (i.e.~have a rational point as soon as they have an adelic point), then we deduce results about the counting function \eqref{def:N}.
As conics satisfy the Hasse principle, this gives the following.
(Note that for conic bundles, a fibre is pseudo-split if and only if it is split.)

\begin{corollary} \label{cor:conics}
	Let $\pi:V \to \PP^1$ be a non-singular conic bundle all of whose non-split fibres
	lie over rational points. 	Assume that there is a smooth fibre
	with a rational point.
	Then
	$$N(\pi,B) \asymp \frac{B^2}{(\log B)^{\Delta(\pi)}}.$$
\end{corollary}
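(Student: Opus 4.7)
The plan is to deduce the corollary directly from Theorem~\ref{thm:Serre} by invoking the Hasse principle for conics. The work is in checking that the hypotheses of Theorem~\ref{thm:Serre} are met and in comparing the counts $N(\pi,B)$ and $N_{\mathrm{loc}}(\pi,B)$.

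First I would verify the hypotheses of Theorem~\ref{thm:Serre} for $\pi$. The generic fibre is a smooth conic, hence geometrically integral. Because $V$ is smooth and $\pi$ is a conic bundle, the only singular fibre type that can appear is a pair of distinct lines meeting transversally; in particular every fibre contains an irreducible component of multiplicity $1$ (double line fibres, which would violate this, are incompatible with $V$ being smooth). For a conic bundle a fibre is pseudo-split precisely when it is split, so the assumption that each non-split fibre lies over a rational point is exactly the non-pseudo-split hypothesis. Finally, a smooth fibre with a rational point is automatically everywhere locally soluble, which gives the last hypothesis.

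Next I would apply Theorem~\ref{thm:Serre} to obtain
$$N_{\mathrm{loc}}(\pi,B) \asymp \frac{B^2}{(\log B)^{\Delta(\pi)}}.$$
The upper bound $N(\pi,B) \leq N_{\mathrm{loc}}(\pi,B)$ is immediate, since a fibre with a $\QQ$-point automatically has an adelic point. For the matching lower bound I would use the Hasse principle for smooth conics: for every $x \in \PP^1(\QQ)$ such that $V_x$ is smooth, one has $V_x(\QQ) \neq \emptyset$ if and only if $V_x(\Adele_\QQ) \neq \emptyset$. The singular locus of $\pi$ consists of finitely many points of $\PP^1$, so at most $O(1)$ of the points counted by $N_{\mathrm{loc}}(\pi,B)$ correspond to singular fibres. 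Combining these observations,
$$N(\pi,B) \geq N_{\mathrm{loc}}(\pi,B) - O(1) \gg \frac{B^2}{(\log B)^{\Delta(\pi)}}.$$

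Since all the substantive analytic and geometric content sits inside Theorem~\ref{thm:Serre}, there is no serious obstacle here; the only point requiring minor care is to rule out double line fibres and to observe that pseudo-split coincides with split in the conic bundle setting, both of which follow immediately from the geometry of smooth conic bundles over a curve.
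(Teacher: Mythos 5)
Your proof is correct and is essentially the argument the paper has in mind (the paper leaves it as a one-line remark: apply Theorem~\ref{thm:Serre}, note that pseudo-split and split coincide for conic bundles, and invoke the Hasse principle for conics). Your extra care in ruling out double-line fibres, noting that the singular locus contributes only $O(1)$, and checking each hypothesis explicitly is all sound and matches the intended deduction.
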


This answers many new cases of Serre's question posed in \cite{Ser90}. 	
This result was previously only known when there are at most $3$ non-split fibres, as a special case of  \cite{Sof16}. 
As an explicit example, we have the following.

\begin{example} \label{ex:explicit_conics}
	Let $a \in \QQ^*\setminus \QQ^{*2}$, let $L_1,\ldots, L_r \in \ZZ[t]$ be linear polynomials
	whose homogenisations are pairwise linearly independent
	and let
	\begin{equation} \label{eqn:norm}
		W: \quad x^2 - ay^2 = L_1(t)\cdots L_r(t) z^2 \subset \mathbb{P}^{2} \times \mathbb{A}^1
	\end{equation}
	equipped with the projection $\pi: W \to \mathbb{A}^1$.
	Assume that there is some $t \in \QQ$ with $L_1(t)\cdots L_r(t) \neq 0$
	such that the conic in \eqref{eqn:norm} has  a rational point.
	Applying Corollary~\ref{cor:conics} to a smooth compactification of $W$
	yields
	$$N(\pi,B) \asymp 
	\begin{cases}
		\frac{B^2}{(\log B)^{(r+1)/2}}, & \text{if $r$ is odd}, \\
		\frac{B^2}{(\log B)^{r/2}}, & \text{if $r$ is even}. \\
	\end{cases}
	$$		
\end{example}

\subsection{Multiple fibres} \label{sec:multiple_intro}

Theorem \ref{thm:Serre} contains the technical geometric assumption that each fibre of $\pi$ contains an 
irreducible component of multiplicity $1$. It turns out that this assumption is \emph{necessary} for
the conclusion to hold.

It was shown in \cite{CTSSD97} that as soon as there are many
double fibres, only finitely many fibres have a rational point. It seems to have not been noticed before
that the following stronger result in fact holds.

\begin{theorem} \label{thm:double}
	Let $V$ be a smooth projective variety over a number field $k$
	equipped with a morphism $\pi:V \to \PP^1$
	whose generic fibre is geometrically integral.
	Assume that $\pi$ has at least $6$ double fibres over $\bar{k}$.
	Then the set
	$$\{x \in \PP^1(k): x \in \pi(V(k_v)) \text{ for all } v \notin S\}$$
	is finite for any finite set of places $S$  of $k$.
\end{theorem}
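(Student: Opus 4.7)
The plan is to reduce the statement to Faltings' theorem applied to a single hyperelliptic curve of genus $2$. Let $a_1,\dots,a_6\in\PP^1(\bar k)$ be distinct points whose fibres satisfy $\pi^{-1}(a_i)=2D_i$ scheme-theoretically, with $D_i$ a prime divisor on $V_{\bar k}$. First I would replace $k$ by a finite extension $K$ over which every $a_i$ and every $D_i$ is defined; this is harmless, since a $k$-point is a $K$-point and $V_t(k_v)\neq\emptyset$ implies $V_t(K_w)\neq\emptyset$ for each $w\mid v$, so finiteness over $K$ (with the enlarged set of bad places $S_K$) implies finiteness over $k$. After spreading out and enlarging $S$ to a finite set $T$, I would arrange that each $D_i$ admits a smooth integral model over $\OO_v$ with $D_i(\FF_v)\neq\emptyset$ for every $v\notin T$ (the latter by Lang--Weil, using that $D_i$ may be assumed geometrically integral).

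The central local step will be the following claim: for every $v\notin T$ and every $t\in k$ with $V_t(k_v)\neq\emptyset$, the valuation $v(t-a_i)$ is even. If $v(t-a_i)=0$ this is immediate. Otherwise $t$ reduces to $a_i$ modulo $v$, so by properness any $k_v$-point of $V_t$ extends to an $\OO_v$-point whose reduction lies in $\pi^{-1}(a_i)(\FF_v)=D_i(\FF_v)$. At such a smooth reduction $\bar P$, the regularity of $V$ together with $\pi^{-1}(a_i)=2D_i$ yields a local equation
$$t-a_i=f^{2}u,$$
where $f$ is a local equation for $D_i$ at $\bar P$ and $u$ is a unit. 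Evaluating at the $k_v$-point gives $v(t-a_i)=2v(f)$, which is even.

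Granted this, the class of $t-a_i$ in $k^{*}/k^{*2}$ lies in the kernel of the residue map $k^{*}/k^{*2}\to\bigoplus_{v\notin T}\ZZ/2\ZZ$, which is finite by finiteness of the $T$-class group and Dirichlet's unit theorem. Hence $t-a_i=\alpha_i y_i^{2}$ for some $y_i\in k^{*}$ and $\alpha_i$ belonging to a finite subset of $k^{*}$. Setting $Y=y_1\cdots y_6$ and $\beta=\prod_i\alpha_i^{-1}$, one obtains
$$Y^{2}=\beta\prod_{i=1}^{6}(t-a_i),$$
with $\beta$ ranging over a finite set. For each such $\beta$ this is a smooth hyperelliptic curve $C_\beta/k$ of genus $2$ (the $a_i$ being distinct), so by Faltings' theorem $C_\beta(k)$ is finite. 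Projecting to the $t$-coordinate and taking the union over $\beta$ gives the required finite set. I expect the main obstacle to be the local step: verifying that $k_v$-points of $V_t$ reduce to \emph{smooth} $\FF_v$-points of the integral model of $D_i$ for all $v\notin T$ requires a careful choice of model, and it is here that all of the hypotheses (smoothness of $V$, geometric integrality of the generic fibre, and the reduced-irreducible structure of $D_i$ after base change) have to cooperate uniformly in $v$.
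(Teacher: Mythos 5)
Your proof is essentially correct and reaches the same endgame as the paper's (Faltings on genus-$2$ hyperelliptic curves), but via a genuinely more elementary route. The paper packages the key step in the language of descent: writing $f(x)=\prod_{i=1}^6 (x-a_i)$, for each $\alpha\in k^*/k^{*2}$ it forms the double cover $C^\alpha: a y^2 = f(x)$ and observes (citing \cite[Rem.~2.1.1]{CTSSD97}) that the pullback $\mathcal{T}_\alpha := V\times_{\PP^1}C^\alpha\to V$ is a $\ZZ/2\ZZ$-torsor precisely because of the six double fibres; it then invokes \cite[Prop.~5.3.2]{Sko01} to conclude that $\mathcal{T}_\alpha(k^S)\neq\emptyset$ for only finitely many $\alpha$. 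You instead unwind what lies behind these black boxes: the torsor property corresponds exactly to your local computation $t-a_i = f^2u$ at a point of a good reduction model, and the finiteness of relevant $\alpha$ comes from the finiteness of the kernel of $k^*/k^{*2}\to\bigoplus_{v\notin T}\ZZ/2\ZZ$, i.e.\ the finiteness of $\mathcal{O}_T^*/\mathcal{O}_T^{*2}$ and of the $T$-class group. Your version is more transparent and self-contained; the paper's is shorter and systematically reusable because it delegates the bookkeeping to the general theory of descent.

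There is, however, a small but real gap in your local parity claim as stated. You claim that $v(t-a_i)$ is even for every $v\notin T$, handling $v(t-a_i)=0$ and $v(t-a_i)>0$, but the case $v(t-a_i)<0$ is silently omitted. This occurs exactly when $v(t)<0$, in which case $v(t-a_i)=v(t)$ for every $i$ (after ensuring $v(a_i)\ge 0$ for $v\notin T$), and there is no reason for $v(t)$ to be even: the reduction of $t$ is then $\infty$, which need not be one of the double fibres and carries no such constraint. Your individual conclusions $t-a_i=\alpha_i y_i^2$ with $\alpha_i$ in a fixed finite set are therefore not available. The fix is to work with the product $g(t)=\prod_{i=1}^6(t-a_i)$ rather than with each factor: when $v(t)\ge 0$ your local step gives evenness factor by factor, while when $v(t)<0$ one has $v(g(t))=6\,v(t)$, which is even because the degree $6$ is even. (This is, at bottom, the same reason the double cover $C^\alpha\to\PP^1$ is unramified above $\infty$, and why the hypothesis of at least six double fibres is the right one.) With this correction you land directly on a finite set of twists $Y^2=\beta\, g(t)$ of the genus-$2$ curve, and the rest of your argument goes through. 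One further minor remark: what you actually need at the reduction $\bar P$ is that the integral model is regular there and that the relation $\pi^*a_i=2\mathcal{D}_i$ persists over $\mathcal{O}_v$; you do not need $\bar P$ to be a smooth point of the reduced fibre $D_i\otimes\FF_v$, nor do you need $D_i(\FF_v)\neq\emptyset$, so the Lang--Weil input can be dropped.
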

This shows   there are only finitely many fibres which are everywhere locally soluble, and hence the conclusion of Theorem \ref{thm:Serre}  does not hold in this case.

\subsection{Controlling failures of the Hasse principle}
Theorem \ref{thm:Serre} counts the number of varieties in a family which are everywhere locally soluble. This gives results for rational points in families if the fibres satisfy the Hasse principle. However, in general the Hasse principle can fail and it is a great challenge to control failures of the Hasse principle in families. We are able to obtain results here providing that the Brauer--Manin obstruction controls such failures.

\begin{theorem} \label{thm:RP}
	Let $V$ be a smooth projective variety over $\QQ$ equipped with a morphism $\pi:V \to \PP^1$
	whose generic fibre is geometrically integral and rationally connected.
	Assume that each non-split fibre of $\pi$ lies over a rational point
	and that the Brauer--Manin obstruction is the only one to the Hasse principle for the fibres of $\pi$. If $V(\QQ) \neq \emptyset$ then
	$$N(\pi,B) \gg \frac{B^2}{(\log B)^{\omega(\pi)}}, \quad \mbox{for some } \omega(\pi)>0.$$
\end{theorem}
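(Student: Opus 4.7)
The plan is to upgrade the locally-soluble count underlying Theorem~\ref{thm:Serre} to an actual rational-points bound by adding a sieve that enforces the vanishing of the Brauer--Manin obstruction on each fibre. Since the generic fibre is smooth and rationally connected, the quotient $\Br(V_\eta)/\Br(\QQ)$ is finite, so the obstruction on the smooth fibres is controlled by a finite set of classes $\alpha_1,\dots,\alpha_m \in \Br(V)$ (obtained after shrinking to a suitable open subset). By the hypothesis that Brauer--Manin is the only obstruction to the Hasse principle on the fibres, a smooth fibre $V_x$ above $x \in \PP^1(\QQ)$ has a rational point exactly when $V_x(\Adele_\QQ)$ admits an element orthogonal to all the $\alpha_i$ under the Brauer--Manin pairing.

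The second step is to translate this combined condition into one governed by a Frobenian multiplicative function on the primitive integer coordinates of $x$. At a prime $p$ of good reduction for $\pi$ and the $\alpha_i$, the admissible local values of $x$ together with a compatible choice of fibre point are cut out by the local invariants $\inv_p\alpha_i$, and these admissible values depend only on the Frobenius class of $p$ in a suitable Galois extension of $\QQ$; this is precisely the framework of Frobenian multiplicative functions developed elsewhere in the paper. Multiplied with the local solubility indicator already used for Theorem~\ref{thm:Serre}, one obtains a Frobenian multiplicative function $g$ with $g(x)=1$ forcing $V_x(\QQ)\neq\emptyset$ at smooth fibres. Applying the paper's lower-bound result for sums of such functions over $\PP^1$ then yields
\[
	N(\pi,B) \;\gg\; \sum_{\substack{x\in\PP^1(\QQ)\\ H(x)\leq B}} g(x) \;\gg\; \frac{B^2}{(\log B)^{\omega(\pi)}},
\]
with $\omega(\pi)>0$ the combined Frobenian defect contributed by the singular fibres and by the extra characters coming from $\alpha_1,\ldots,\alpha_m$.

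The main obstacle is to prove positivity of the implied constant, i.e.~that the Frobenian function $g$ has strictly positive mean density. Equivalently, one must show that the set of $x$ for which all local solubility and Brauer--Manin orthogonality conditions can be simultaneously met is non-empty at every place. This is where the assumption $V(\QQ)\neq\emptyset$ is decisive: the image under $\pi$ of a rational point of $V$ provides a base point $x_0\in\PP^1(\QQ)$ at which every local invariant is compatible by construction, and weak approximation on $\PP^1$ then spreads this seed to a positive-density open set of admissible adelic conditions. Checking that the archimedean and bad-reduction places interact coherently with the Frobenian encoding at the remaining primes, and that the resulting mean density is strictly positive rather than merely non-negative, is the technical crux of the proof.
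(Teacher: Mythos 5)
Your proposal captures the right first move---use the finiteness of $\Br V_\eta/\Br \QQ$ (which holds since the generic fibre is rationally connected) to reduce the Brauer--Manin condition to a finite set of classes $\alpha_1,\ldots,\alpha_m$---but there is a genuine gap in the claim that the combined ``locally soluble and Brauer--Manin orthogonal'' condition can be encoded by a single Frobenian multiplicative function on the coordinates of $x$. The Brauer--Manin pairing is a \emph{global reciprocity constraint}: one asks whether there exists an adelic point $(P_v)$ with $\sum_v \inv_v \alpha_i(P_v)=0$, and the summands at different places are not independent of one another. A Frobenian multiplicative function only encodes conditions that are imposed independently prime by prime, so you cannot directly absorb the reciprocity sum into it. The closest one can come is to impose the strictly stronger condition that $\alpha_i(P_p)=0$ at every place, which is equivalent to local solubility of the Brauer--Severi fibrations $Y_{\alpha_i}\to V_U$; this is indeed what the paper does (it forms $\psi: Y\to V$ and tries to apply Theorem~\ref{thm:Serre} to $\pi\circ\psi$). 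But at this point a new obstacle appears that your proposal does not address: the non-split fibres of $\pi\circ\psi$ generally \emph{do not lie over rational points of $\PP^1$}, so the hypotheses of Theorem~\ref{thm:Serre} (and of your ``local solubility indicator'') are violated at those fibres.

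The paper's resolution of this, following Harpaz--Wittenberg, is precisely the content that is missing from your sketch. One applies the detector functions only for the non-split fibres of $\pi$ itself (which are over rational points), and for each new non-split fibre of $\pi\circ\psi$ over a closed point $\theta_i$ of higher degree one chooses an auxiliary prime $p_i$ completely split in a splitting field $K_i$, where the local invariants of $\alpha_j$ can be \emph{adjusted} to cancel whatever nonzero contributions arose at the primes $p\in\Omega_{x,i}$. This requires choosing the base rational point $P\in V(\QQ)$ very carefully---not an arbitrary one, but one obtained via weak weak approximation on $V$ (using \cite[Thm.~9.28]{HW16}) so that $\pi(P)$ is $p_i$-adically close to $\theta_i$ for each $i$. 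Your statement that ``the image under $\pi$ of a rational point of $V$ provides a base point $\ldots$ and weak approximation on $\PP^1$ then spreads this seed'' conflates the trivial weak approximation on $\PP^1$ with the genuinely substantive weak weak approximation on $V$ needed here. Finally, the conclusion also requires passing outside a thin set (via Serre's theorem on thin sets) so that the chosen generators $\alpha_j$ actually surject onto $\Br V_x/\Br\QQ$ for the $x$ being counted. Without the auxiliary prime mechanism and the control of new non-split fibres, the proposed argument would fail: the Frobenian function $g$ you describe does not exist as stated, and the reduction to Theorem~\ref{thm:frob} cannot be carried out directly.
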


We prove this result by combining our techniques with the method of Harpaz and Wittenberg \cite{HW16} on the fibration method. Theorem \ref{thm:RP} and its proof may be viewed as a quantitative version of the results from \cite{HW16}. Recall that the Brauer--Manin obstruction is the only one to the Hasse principle for torsors under tori \cite{San81}. In particular, Theorem \ref{thm:RP} applies to the following multinorm equations.

\begin{corollary}
	Let $E=E_1 \times \dots \times E_s$ be a product of number fields,
	let $a_1,\dots,a_r \in \NN$, and let $L_1,\ldots, L_r \in \ZZ[x_1,\ldots,x_n]$ be linear polynomials
	whose homogenisations are pairwise linearly independent. Let $V$ be a smooth projective compactification of the variety
	$$
		W: \quad \Norm_{E/\QQ}(t) = L_1(x)^{a_1}\cdots L_r(x)^{a_r} \subset \mathbb{A}^{[E:\QQ]} \times \mathbb{A}^1,
	$$
	equipped with the projection $\pi:V \to \PP^1$ coming from the $x$-coordinate. Assume that $V(\QQ) \neq \emptyset$. Then
	$$N(\pi,B) \gg \frac{B^{2}}{(\log B)^{\omega(\pi)}}, \quad \mbox{for some } \omega(\pi) > 0.$$
\end{corollary}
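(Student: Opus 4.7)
The plan is to verify each hypothesis of Theorem~\ref{thm:RP} for the fibration $\pi\colon V \to \PP^1$; the conclusion is then immediate. Apart from the assumption $V(\QQ)\neq\emptyset$, which is stated, what remains to check is: (i) the generic fibre of $\pi$ is geometrically integral and rationally connected, (ii) each non-split fibre lies over a rational point, and (iii) the Brauer--Manin obstruction is the only one to the Hasse principle for the smooth fibres.

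For (i), set $c = L_1^{a_1}\cdots L_r^{a_r} \in \QQ(\PP^1)^*$. The generic fibre of $W \to \mathbb{A}^1$ is the affine variety cut out by $\Norm_{E/\QQ}(t) = c$ over $\QQ(\PP^1)$, which is an \'etale torsor under the multinorm torus $T = \ker(\Norm_{E/\QQ}\colon \Res_{E/\QQ}\Gm \to \Gm)$. As any torsor under a connected smooth algebraic group becomes trivial over an algebraic closure, the generic fibre becomes isomorphic to $T_{\overline{\QQ(\PP^1)}}$, a split torus, which is smooth, geometrically integral and geometrically rational. Refining $V$ by resolution of singularities if necessary, the generic fibre of $\pi\colon V \to \PP^1$ is a smooth projective compactification of this torsor, and in particular is geometrically integral and rationally connected.

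For (ii), the singular fibres of $\pi$ can only occur over the zeros of the $L_i$ in $\mathbb{A}^1$ together with the point at infinity of $\PP^1$; all of these are rational points, so every non-split fibre lies over a rational base point. For (iii), every smooth fibre of $\pi$ over a rational base point is a principal homogeneous space under $T$, and by Sansuc's theorem \cite{San81} the Brauer--Manin obstruction is the only obstruction to the Hasse principle for smooth projective compactifications of such torsors. Combined with the hypothesis $V(\QQ)\neq\emptyset$, all assumptions of Theorem~\ref{thm:RP} are in force, so the required lower bound follows with some $\omega(\pi)>0$.

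The main obstacle I expect to face is a careful execution of (i): one must verify that the generic fibre is geometrically integral (which follows from the torsor description, not merely from irreducibility of a single polynomial equation) and that rational connectedness is preserved under passing to a smooth projective compactification of the total space. The points (ii) and (iii) are essentially formal, once the torsor structure under $T$ and the rationality of the zeros of the $L_i$ are recognised.
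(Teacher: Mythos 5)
Your proposal is correct and matches the paper's own (very brief) argument: the paper derives this corollary from Theorem~\ref{thm:RP} precisely by invoking Sansuc's theorem \cite{San81} that the Brauer--Manin obstruction is the only one for smooth compactifications of torsors under tori, exactly as you do. You have simply spelled out the verification of the hypotheses (torsor structure under the multinorm torus, geometric integrality and rational connectedness of the generic fibre, rationality of the points below the non-split fibres) that the paper leaves implicit.
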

Let us compare the assumptions in Theorem \ref{thm:Serre} and Theorem \ref{thm:RP}. Theorem~\ref{thm:RP} has the  condition that the generic fibre is rationally connected. We actually prove a more general version (Theorem \ref{thm:RP2}) which only poses cohomological conditions on the generic fibre; but in this more general result one obviously also needs to stipulate that there is an irreducible component of multiplicity $1$ in each fibre, which is automatic for families of rationally connected varieties \cite{GHS03} 

One subtle difference is that Theorem \ref{thm:Serre}  assumes that all \emph{non-pseudo-split} fibres lie over rational points, whereas  Theorem \ref{thm:RP} requires all \emph{non-split} fibres to lie over rational points. The latter condition is stronger in general.

\begin{example}
A difficult case, currently out of reach of \cite{HW16}, is
$$(x_1^2 - ax_2^2)(y_1^2 - by_2^2)(z_1^2 - abz_2^2) = f(t),$$
where $f$ is an irreducible polynomial of large degree and none of $a,b,ab$ is a square in the residue field of $f$. Here every fibre is pseudo-split, so that $\Delta(\pi) = 0$, and moreover every rational fibre is even everywhere locally soluble \cite[Prop.~5.1]{CT14}. Despite this, the fibres can still fail the Hasse principle, and it is not known whether the Brauer--Manin obstruction is the only one for the total space. This example is covered by Theorem \ref{thm:Serre} but not by Theorem~\ref{thm:RP}. 
\end{example}

\begin{remark}
	The proof of Theorem \ref{thm:RP} gives an explicit value for $\omega(\pi)$
	(cf.~Remark \ref{rem:admissible}), which we doubt is sharp
	in general. Improving this would require significant new ideas 
	on the version of the fibration method developed in \cite{HW16}.
\end{remark}

\subsection{Frobenian multiplicative functions}
We prove Theorem \ref{thm:Serre} by finding sufficient conditions for a fibre to be everywhere locally
soluble. These conditions allow us to reduce to studying the average orders
of certain multiplicative functions evaluated at linear forms. The multiplicative functions which arise this way are built out of data coming from the splitting behaviour of primes in number fields and are closely related to Serre's notion of \emph{frobenian functions} \cite[\S3.3]{Ser12}.

In this paper we introduce a class of multiplicative functions, called \emph{frobenian multiplicative functions}, which includes the above.
The divisor function, the (normalised) sums of two squares function and the indicator function
for sums of two squares are frobenian multiplicative functions. Other examples are  indicator functions for numbers
all of whose prime factors are congruent to $ a \bmod q$, for some fixed $a,q$, and the reduction modulo $q$ of the $n$th coefficient of a Hecke eigenform \cite[\S 3.4.3]{Ser12} (suitably considered as a complex number). Definitions and further details on this class, including the definition of the mean value $m(\rho)$, can be found in \S\ref{sec:frob}, see specifically Definitions \ref{def:frob}, \ref{def:e-weak_frobenian}, and \ref{def:class_F}.
Our main analytic result concerning these is the following.

\begin{theorem} \label{thm:frob}
	Let $\rho_1,\ldots, \rho_r$ be real-valued non-negative frobenian multiplicative functions with $m(\rho_j) \neq 0$, 
	and extend each function $\rho_j$ to all of $\ZZ$ by setting $\rho_j(-m)=0$ for all $m \geq 0$.
	Let $L_1(x_0,\ldots,x_n),\ldots, L_r(x_0,\ldots,x_n) \in \ZZ[x_0,\ldots,x_n]$ be
	linear polynomials whose non-constant parts are pairwise linearly independent. 
	Let $\mathfrak{K} \subset [-1,1]^{n+1}$ be convex subset of positive measure and $\boldsymbol{a} \in \RR^{n+1}$. Then there exists $C_{\mathfrak{K}, \boldsymbol{\rho}, \mathbf{L}} \geq 0$ such that
	$$\sum_{\xx \in (B\mathfrak{K} + \boldsymbol{a}) \cap \ZZ^{n+1}} \rho_1(L_1(\xx)) \cdots \rho_r(L_r(\xx))
	= ( C_{\mathfrak{K}, \boldsymbol{\rho}, \mathbf{L}} + o(1))
	B^{n+1}\prod_{j=1}^r(\log B)^{m(\rho_j)-1} ,$$
	as $B \to \infty$. Moreover, we have $C_{\mathfrak{K}, \boldsymbol{\rho}, \mathbf{L}} > 0$ 
	if and only if there exists $\xx \in \ZZ^{n+1}$ 
	with $\rho_1(L_1(\xx)) \cdots \rho_r(L_r(\xx)) > 0$
	and $\yy \in \fK$ with $L_j(\yy) > L_j(\boldsymbol{0})$ for all $1 \leq j \leq r$.
\end{theorem}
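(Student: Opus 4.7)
The plan is to combine Selberg--Delange asymptotics for individual frobenian multiplicative functions with the linear-forms-in-primes machinery of Green--Tao--Ziegler, as adapted to non-negative multiplicative functions of divisor type in earlier work of the second author on multiplicative functions in linear configurations. As a preliminary reduction, since $\rho_j(-m)=0$ for every $m\geq 0$, only those $\xx$ with $L_j(\xx)>0$ for all $j$ contribute, so one restricts $\fK$ to the region where this positivity holds; the archimedean factor of the eventual leading constant is then transparently the volume of $\{\yy \in \fK : L_j(\yy) > L_j(\boldsymbol{0}), \, 1 \le j \le r\}$, matching the stated archimedean criterion for positivity.

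For the analytic input, the frobenian structure of each $\rho_j$ supplies a Dirichlet series factorisation of the form $\sum_m \rho_j(m) m^{-s} = \zeta(s)^{m(\rho_j)} G_j(s)$, with $G_j$ holomorphic and non-vanishing in a standard zero-free region to the left of $\re(s)=1$. Selberg--Delange then yields the one-variable asymptotic $\sum_{m \le x} \rho_j(m) \asymp x(\log x)^{m(\rho_j)-1}$ together with Siegel--Walfisz-type uniformity across arithmetic progressions; this is the ingredient the higher-dimensional argument consumes on the major arcs.

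To pass to the multivariable setting, decompose each function as $\rho_j = \rho_j^{\sharp} + \rho_j^{\flat}$, where $\rho_j^{\sharp}$ is a nilsequence/major-arc piece and $\rho_j^{\flat}$ has small Gowers $U^s$ norm on arithmetic progressions, for $s$ large enough to control the configuration of linear forms. Equidistribution of the $L_j$ on nilmanifolds evaluates the fully structured product $\prod_j \rho_j^{\sharp}(L_j(\xx))$ and produces the asymptotic, with the explicit constant $C_{\fK,\boldsymbol{\rho},\mathbf{L}}$ appearing as an Euler product $c_\infty \prod_p c_p$ of archimedean and $p$-adic densities. Any cross-term containing at least one $\rho_j^{\flat}$ factor is negligible via the generalised von Neumann inequality, adapted as in Matthiesen's work to handle functions only majorised by divisor-type weights rather than bounded functions.

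The principal technical obstacle is the Gowers uniformity bound for $\rho_j^{\flat}$: one must show that after subtracting the pretentious component (built from Dirichlet characters tied to the Galois group governing $\rho_j$), the remainder has power-saving correlation with every polynomial nilsequence on each progression. This is a frobenian generalisation of the corresponding bounds known for the (normalised) divisor function and for the indicator of sums of two squares; I expect it to follow by marrying the multiplicative structure to a nilpotent analogue of Selberg--Delange, but the bookkeeping of characters tied to the underlying Galois data is the delicate part. Finally, the positivity claim for $C_{\fK,\boldsymbol{\rho},\mathbf{L}}$ splits into independent checks: $c_\infty>0$ is exactly the stated $\yy$-condition, whereas a Hensel-type argument applied to the local data defining each $\rho_j$ converts the existence of a single global $\xx \in \ZZ^{n+1}$ with $\prod_j \rho_j(L_j(\xx)) > 0$ into the simultaneous positivity of every $c_p$.
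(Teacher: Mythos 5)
Your high-level plan---feed Selberg--Delange-style one-variable asymptotics for each $\rho_j$ into the Green--Tao transference machinery as adapted by Matthiesen to divisor-bounded multiplicative functions---is indeed the route the paper takes (it quotes \cite[Thm.~1.3]{Mat16} directly after verifying the $\rho_j$ lie in Matthiesen's class $\mathcal{F}$). But there is a genuine error in the part of your argument that handles the leading constant and its positivity, and it is not a small one.

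You assert that the constant factors as an Euler product $c_\infty\prod_p c_p$, and deduce positivity by an archimedean check together with a Hensel-type verification that each $c_p>0$. Both claims are false in this generality, and the paper is explicitly organised around the fact that they fail. The actual main term coming out of \cite{Mat16} is of the form
\[
\sum_{\substack{w_1,\dots,w_r\\ p\mid w_i\Rightarrow p\mid \widetilde W}}
\sum_{A_1,\dots,A_r\in(\ZZ/\widetilde W\ZZ)^*}
\Big(\prod_j \rho_j(w_j)\,S_{\rho_j}(N;\widetilde W,A_j)\Big)\,
\beta_{\boldsymbol L}(w_1A_1,\dots,w_rA_r),
\]
and the mean values $S_{\rho_j}(N;\widetilde W,A_j)$ are asymptotically proportional to a character sum $\sum_{\chi\in\mathcal{E}_{\rho_j}}\bar\chi(A_j)$ over the Dirichlet characters for which $m(\rho_j\chi)=m(\rho_j)$. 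These character sums entangle the residues $A_j$ modulo the conductors $q_{\rho_j}$ across all primes dividing $q_{\rho_j}$ simultaneously, and they prevent the constant from factorising over primes (the paper states this explicitly in Remark~\ref{rem:leading-constant}). Remark~\ref{rem:const_BM} goes further and exhibits instances where every local density is positive and yet $C_{\fK,\boldsymbol\rho,\mathbf{L}}=0$, the vanishing being forced by a Brauer--Manin obstruction on an auxiliary variety. So a local-to-global deduction of the kind you propose cannot close the argument.

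The paper's positivity argument is different in kind. Given a single witness $\xx\in\ZZ^{n+1}$ with $\prod_j\rho_j(L_j(\xx))>0$, it sets $w_j^*=L_j(\xx)$ and uses the ``lifting lemma'' (Lemma~\ref{lem:lifting}) to lower-bound the mean values $S_{\rho_j}(N;\widetilde W,A_j)$ when $A_j\equiv 1\pmod{q_{\rho_j}}$; the witness then forces one specific summand $\beta_{P(B_0)}(1,\dots,1)\prod_j C_1(\rho_j)$ in the (non-factorisable) expression for the constant to be strictly positive, while all other summands are nonnegative because each $\rho_j$ is. The existence of the witness is consumed \emph{globally}, as a single solution of a simultaneous system of congruences, not prime by prime; this is the conceptual gap you would need to fill, and no amount of Hensel lifting at individual primes will substitute for it because the character sums do not decouple. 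One smaller point: your claim of a non-vanishing zero-free region for $G_j$ is more than is available or needed; Proposition~\ref{prop:frob_zeta} only gives holomorphy of $G$ in a Vinogradov-type region with polynomial growth, plus non-vanishing of the single value at $s=1$, and that suffices for Selberg--Delange.
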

\begin{remark}
 The first of the two conditions for positivity of $C_{\mathfrak{K}, \boldsymbol{\rho}, \mathbf{L}}$ 
 is clearly necessary.
 The second condition ensures that the linear parts of the polynomials $L_j$ are 
 simultaneously positive at some point in $\mathfrak{K}$, which is necessary in order
 for the polynomials $L_j$ to be simultaneously positive on a positive proportion of 
 lattice points in $B\mathfrak{K} + \boldsymbol{a}$ for all sufficiently large $B$.
\end{remark}

The condition $m(\rho) \neq 0$ on the means is clearly necessary; this rules out trivial cases such as the multiplicative function $\rho$ with  $\rho(n) = 1$ if and only if $n=1$.

\begin{remark}[] \label{rem:const}
	An explicit expression for the leading constant in Theorem \ref{thm:frob}
	can be found in Remark \ref{rem:leading-constant}.
	It is given as an alternating sum of Euler products, which is important for the following reason:
	
	There can naturally be \emph{local obstructions} to the positivity of this constant.
	E.g.~if $\rho$ is a frobenian multiplicative function with $\rho(2^n) = 0$
	then we have $\rho(x)\rho(y)\rho(x + y) = 0$ for all integers $x,y$. Here there is an obstruction at $2$.
		
	But there can also be \emph{global obstructions} to positivity of the leading constant,
	which are not explained by any local obstructions. This is reflected in the fact
	that the leading constant is not an Euler product in general.
	An explicit example of this kind can be found in Remark~\ref{rem:const_BM}. (This comes from a Brauer--Manin obstruction to the Hasse principle on some auxiliary variety.)
	
	The fact that the leading constant is not an Euler product means that we can obtain asymptotic results even in situations when weak approximation fails, which is unusual for this type of result.
\end{remark}

Theorem \ref{thm:frob} is proved using tools from additive combinatorics.
In particular, we build on new results about multiplicative functions from Matthiesen \cite{Mat14, Mat16}.
Crucially, the correct order lower bound in Theorem \ref{thm:Serre} as well as the asymptotic result in Theorem \ref{thm:frob} require the full strength of these new results on multiplicative functions from \cite{Mat14, Mat16}. 
In particular, our results presented here are out of reach from the additive combinatorics methods established or used in
\cite{GT-linearprimes, BMS14, HWS, BM17, M-squarefree, HW16}.

\subsection{Higher-dimensional bases} 
There are two main difficulties in generalising Theorem \ref{thm:Serre} to families of varieties over $\PP^n$ with $n > 1$. Firstly, Theorem \ref{thm:frob} takes care of codimension $1$ behaviour, but in general there could be higher codimension behaviour to deal with. To overcome this one would need
to combine our techniques with some suitable version of the sieve of Ekedahl (see e.g.~\cite{Eke91},\cite[\S3]{Bha14}, \cite[\S3]{BBL15}).
The second problem
is that frobenian multiplication functions are built out of data concerning number fields, whereas
over higher-dimensional bases one would also need to deal with finitely generated extensions of $\QQ$.

We can overcome these problems in two cases: for  Serre's problem
\cite{Ser90} regarding specialisations of Brauer group elements and for certain multinorm equations.

\subsubsection{Specialisations of Brauer group elements}
Let $U$ be a smooth variety over $\QQ$ equipped with a height function $H$ and let $\br \subset \Br U$ be a finite subgroup.
Then Serre \cite{Ser90} proposed to study the \emph{zero-locus}
$$U(\QQ)_\br = \{ x \in U(\QQ): b(x) = 0 \in \Br \QQ \,\, \forall \, b \in \br\}$$
of $\br$, as well as the associated counting function
$$N(U,\br,B) = \# \{ x \in U(\QQ)_\br: H(x) \leq B\}.$$
This  problem  may be interpreted
more geometrically via families of Brauer-Severi varieties (e.g.~families of conics as we have already
met in \S\ref{sec:P1}), but one obtains a cleaner framework by working
with Brauer group elements directly. Serre achieved upper bounds for the counting problem in  the special case
where $U \subset \PP^n_\QQ$ and $|\br| = 2$, and asked whether his bounds were sharp. (Serre's upper
bounds were subsequently generalised in \cite[\S 5.3]{LS16} to arbitrary finite $\br \subset \Br U$).

Here the first issue mentioned above, regarding higher-codimension behaviour, does  not occur as Grothendieck's
purity theorem implies that only codimension $1$ is relevant. The second issue, regarding finitely
generated extensions of $\QQ$, disappears if one only considers algebraic Brauer group elements, i.e.~those
Brauer group elements which trivialise after a finite extension of $\QQ$.

Our result for Brauer groups is the following. (We recall in \S\ref{sec:Brauer_basic} various facts about Brauer groups, including the residue map $\partial_D$ appearing
in Theorem \ref{thm:Brauer}.)

\begin{theorem} \label{thm:Brauer}
	Let $U \subset \PP^n$ be a Zariski open subset which is the complement 
	of finitely many hyperplanes. Let $\br \subset \Br_1 U$ be a finite subset such that $U(\QQ)_\br \neq \emptyset$.
	Then as $B \to \infty$ we have
	$$N(U,\br,B) \asymp \frac{B^{n+1}}{(\log B)^{\Delta(\br)}},
	\quad \text{where }
	\Delta(\br) = \sum_{D \in (\PP^n)^{(1)}} \left(1 - \frac{1}{|\langle \partial_D \br \rangle|}\right).$$
\end{theorem}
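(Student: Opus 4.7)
The upper bound $N(U,\br,B) \ll B^{n+1}/(\log B)^{\Delta(\br)}$ was established in \cite[\S 5.3]{LS16} as a generalisation of Serre's sieve, so the new content is the matching lower bound. The plan is to translate the condition $x \in U(\QQ)_\br$ into a product of local Chebotarev conditions and then apply Theorem \ref{thm:frob} to the resulting frobenian sum.

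First, a residue analysis. Since $U$ is the complement of finitely many hyperplanes in $\PP^n$, Grothendieck's purity theorem shows that the only $D \in (\PP^n)^{(1)}$ along which $\br$ has nontrivial residue are among these hyperplanes. Because $\br \subset \Br_1 U$ is algebraic, each residue $\partial_D b \in H^1(\kappa(D),\QQ/\ZZ)$ factors through a character of $\Gal(\bar\QQ/\QQ)$; let $K_D/\QQ$ denote the abelian extension cut out by $\langle \partial_D \br\rangle$, so that $[K_D:\QQ] = |\langle \partial_D \br\rangle|$. A direct tame-symbol computation shows: there is a finite set $S$ of bad primes such that, for any primitive $x \in \ZZ^{n+1}$ and any $p \notin S$ with $p \| L_D(x)$ and $x \bmod p$ lying on no hyperplane other than $D$, one has $\inv_p(b(x)) = 0$ for all $b \in \br$ if and only if $p$ splits completely in $K_D$.

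For each hyperplane $D$ with defining linear form $L_D \in \ZZ[x_0,\ldots,x_n]$ we introduce the multiplicative function $\rho_D$ supported on positive squarefree integers coprime to $S$, with $\rho_D(p)=1$ iff $p$ splits completely in $K_D$; this is frobenian with mean $m(\rho_D) = 1/|\langle \partial_D \br\rangle|$. Fix $x_0 \in U(\QQ)_\br$ and open neighbourhoods $\mathcal{U}_v \subset \QQ_v^{n+1}$ of $x_0$ for each $v \in S \cup \{\infty\}$ on which the local invariants all vanish by continuity. Translating the congruences at $p \in S$ into a shift $\ba$ and choosing a convex $\fK$ inside the archimedean neighbourhood on which all $L_D$ are strictly positive, every primitive $x \in (B\fK+\ba)\cap \ZZ^{n+1}$ with $\prod_D \rho_D(L_D(x)) > 0$ satisfies $\inv_v(b(x))=0$ at every place, hence $b(x)=0$ for every $b \in \br$ by the Hasse--Brauer--Noether reciprocity, so $x \in U(\QQ)_\br$. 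Theorem \ref{thm:frob} then gives
$$\sum_{x \in (B\fK + \ba) \cap \ZZ^{n+1}} \prod_D \rho_D(L_D(x)) \asymp B^{n+1} \prod_D (\log B)^{m(\rho_D)-1} = \frac{B^{n+1}}{(\log B)^{\Delta(\br)}},$$
provided the leading constant is positive. The geometric positivity condition of Theorem \ref{thm:frob} is arranged by the choice of $\fK$, while the arithmetic one reduces to exhibiting an integer vector on which each $L_D$ takes a value that splits completely in $K_D$. Such a vector is produced by Chebotarev (to find primes splitting in each $K_D$) combined with strong approximation (to realise them as the values of the $L_D$, using the pairwise linear independence of their homogeneous parts).

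The main technical obstacle is the residue computation in the second paragraph, together with verification that the configurations we have discarded --- squarefull contributions $p^2 \| L_D(x)$, primes $p$ where $x \bmod p$ simultaneously kills several of the $L_D$, and the bookkeeping around primitivity, signs, and the fixed $S$-adic congruences --- contribute only $o(B^{n+1}/(\log B)^{\Delta(\br)})$ and thus leave the matching bounds intact. Once these reductions are in place, the analytic input of Theorem \ref{thm:frob} supplies both the correct asymptotic order of magnitude and, through its positivity criterion combined with the hypothesis $U(\QQ)_\br \neq \emptyset$, the positivity of the leading constant needed for the lower bound.
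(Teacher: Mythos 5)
Your overall strategy --- reading off the ramification of $\br$ along the boundary hyperplanes, turning the condition $\inv_p(b(x))=0$ into a frobenian condition on the prime factors of $L_D(x)$, and feeding the resulting product of frobenian multiplicative functions into Theorem~\ref{thm:frob} --- is exactly the route the paper takes (via Propositions~\ref{prop:Brauer_integral} and~\ref{prop:frob_residue}, followed by Theorem~\ref{thm:frob_applications}). But your particular choice of detector function breaks the argument at the positivity step.

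You take $\rho_D$ to be supported on \emph{squarefree integers coprime to $S$}. Compare this with the paper's $\varpi_D$: it is \emph{completely multiplicative} and, crucially, $\varpi_D(p)=1$ for all $p\in S$, so that $\varpi_D$ imposes no condition whatsoever at the bad primes. The difference matters. You separately impose $S$-adic neighbourhood conditions $|x_i/x_0 - y_i/y_0|_p<\delta$ for $p\in S$, encoded into the shift $\ba$. Now there is no reason why $L_D(\by)$ should be coprime to $S$; indeed $S$ contains the ramified primes of $K_D$ and the primes of bad reduction, over none of which you have control, and if some $p_0\in S$ divides $L_D(\by)$ then (for $\delta$ small enough, as it must be) \emph{every} $\bx$ in the $p_0$-adic neighbourhood of $\by$ has $p_0\mid L_D(\bx)$, forcing $\rho_D(L_D(\bx))=0$ identically on the summation domain. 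The lower bound then reads $N(U,\br,B)\geq 0$, which is vacuous. The paper escapes this by enlarging $S$ so that every prime divisor of $L_D(\by)$ lies in $S$ and then declaring $\varpi_D\equiv 1$ on $S$-primes; with that convention $\prod_D\varpi_D(L_D(\by))=1$ automatically, which is what furnishes the ``starting point'' required by the positivity criterion. Your Chebotarev-plus-strong-approximation construction of a starting vector does not repair this: that vector need not lie in the lattice coset $\ba \bmod M$ dictated by the $S$-adic conditions, and the positivity criterion of Theorem~\ref{thm:frob} applied with the shifted linear forms $M L_j(\cdot)+L_j(\ba)$ requires a solution compatible with that coset (see the proof of Theorem~\ref{thm:frob_applications}). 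You need the given $\by\in U(\QQ)_\br$ to be the solution, and for that the detector must accept it.

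A second, lesser issue: because your $\rho_D$ is supported on squarefree integers, it is not completely multiplicative, so the M\"obius step that the paper uses to peel off the primitivity condition (the identity $\rho(kL(\bx))=\rho(k)\rho(L(\bx))$ in the proof of Theorem~\ref{thm:frob_applications}) is unavailable. Your remark that ``the bookkeeping around primitivity\dots contributes only $o(B^{n+1}/(\log B)^{\Delta(\br)})$'' is also not correct as stated --- the non-primitive vectors contribute a positive proportion, not an error term --- although this does not sink the lower bound since one only needs $\gg$. Replacing your $\rho_D$ with the paper's completely multiplicative $\varpi_D$ repairs both problems at once, and the rest of your outline (purity, factoring residues through $\Gal(\bar\QQ/\QQ)$, local constancy at $S\cup\{\infty\}$, reciprocity, and the application of the frobenian correlation estimate) matches the paper's proof.
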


Using the relationship between quaternion algebras and conics, Theorem \ref{thm:Brauer}
gives the following result.

\begin{corollary} \label{cor:conics_P^n}
	Let $\pi:V \to \PP^n$ be a non-singular conic bundle over $\QQ$ with a smooth
	fibre which contains a rational point.
	Assume that $\pi$ admits a rational section
	over a finite extension of $\QQ$ and that $\pi$ is smooth outside the complement of finitely many
	hyperplanes in $\PP^n$.
	Then
	$$N(\pi,B) \asymp \frac{B^{n+1}}{(\log B)^{\Delta(\pi)}}.$$
\end{corollary}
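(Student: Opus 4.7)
The plan is to reduce Corollary \ref{cor:conics_P^n} to Theorem \ref{thm:Brauer} by encoding the conic bundle as a single Brauer class. Let $U\subset\PP^n$ denote the smooth locus of $\pi$, which by hypothesis is the complement of finitely many hyperplanes. The smooth conic bundle $\pi^{-1}(U)\to U$ determines a class $\alpha\in\Br(U)[2]$, via the classifying map $H^1(U,\PGL_2)\to H^2(U,\mu_2)$, or equivalently by extending the quaternion algebra given by the smooth generic fibre $\pi^{-1}(\eta)\in\Br(\QQ(\PP^n))$ to $\Br(U)$. The existence of a rational section of $\pi$ after base change to a finite extension $k'/\QQ$ forces $\alpha|_{U_{k'}}=0$, so $\alpha\in\Br_1(U)$, matching the hypotheses of Theorem \ref{thm:Brauer} with $\br=\{\alpha\}$.

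Next I would match the counting problems. For any $x\in U(\QQ)$, functoriality of the Brauer group gives $\alpha(x)=[\pi^{-1}(x)]\in\Br(\QQ)[2]$, namely the quaternion class of the smooth conic $\pi^{-1}(x)$. By Hasse--Minkowski, a smooth conic over $\QQ$ has a rational point if and only if its Brauer class is trivial, so
$$\{x\in U(\QQ):x\in\pi(V(\QQ))\}=U(\QQ)_{\{\alpha\}}.$$
The $\QQ$-points of $\PP^n\setminus U$ lie on a union of hyperplanes and hence contribute only $O(B^n)$ to $N(\pi,B)$. The assumption that some smooth fibre contains a rational point translates to $U(\QQ)_{\{\alpha\}}\neq\emptyset$, so Theorem \ref{thm:Brauer} applies and yields
$$N(\pi,B)\;=\;N(U,\{\alpha\},B)+O(B^n)\;\asymp\;\frac{B^{n+1}}{(\log B)^{\Delta(\{\alpha\})}}.$$

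It remains to verify $\Delta(\{\alpha\})=\Delta(\pi)$. For each prime divisor $D\subset\PP^n$ the residue $\partial_D\alpha$ lies in the $2$-torsion of $H^1(\kappa(D),\QQ/\ZZ)\cong\kappa(D)^*/\kappa(D)^{*2}$ and encodes the splitting of $\pi^{-1}(D)$: if the geometric components of the fibre over the generic point of $D$ are individually defined over $\kappa(D)$ then $\partial_D\alpha=0$, whereas if they form a Galois-conjugate pair then $\partial_D\alpha$ represents the separating quadratic extension and $|\langle\partial_D\alpha\rangle|=2$. Summing the contributions $1-1/|\langle\partial_D\alpha\rangle|$ over $(\PP^n)^{(1)}$ therefore recovers the conic-bundle invariant $\Delta(\pi)$ defined in \S\ref{sec:notation}, since only the non-split fibres contribute. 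This final identification of invariants, matching an algebraic residue computation with the geometric Galois action on the components of singular fibres, is the most delicate point; everything else reduces to standard Brauer-theoretic translations of Hasse--Minkowski and an application of Theorem \ref{thm:Brauer}.
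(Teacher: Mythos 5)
Your proposal is correct and is essentially the paper's intended derivation, which the authors leave to the reader with the hint ``using the relationship between quaternion algebras and conics'': one identifies the conic bundle with the quaternion class $\alpha\in\Br U[2]$, uses the rational section over a finite extension to place $\alpha$ in $\Br_1 U$, invokes the fact that a smooth conic over a field $k$ has a $k$-point iff its class in $\Br k$ vanishes (this holds over any field and does not require Hasse--Minkowski, which is a minor misattribution on your part), and matches the residue $\partial_D\alpha$ with the Galois action on the two lines of the degenerate fibre over each codimension-one point to obtain $\Delta(\{\alpha\})=\Delta(\pi)$; the hyperplane locus contributes only $O(B^n)$. One could spell out slightly more why the smooth fibre with a rational point in the hypothesis is over a point of $U$ (so that $U(\QQ)_{\{\alpha\}}\neq\emptyset$ as Theorem~\ref{thm:Brauer} requires), and why a non-singular conic bundle has no double-line fibre over a codimension-one point (the total space would acquire a singularity), which is what makes your dichotomy ``smooth conic / pair of lines'' exhaustive, but these are standard and the argument is complete.
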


\subsubsection{Multinorm  equations}
We are also able to overcome the issues in higher dimension for some explicit families of 
multinorm equations.

\begin{theorem} \label{thm:norms}
	Let $E=E_1 \times \dots \times E_s$ be a product of number fields,
	let $a_1,\dots,a_r \in \NN$, and let $L_1,\ldots, L_r \in \ZZ[x_1,\ldots,x_n]$ be linear polynomials
	whose homogenisations are pairwise linearly independent. Let $V$ be a smooth projective model of the variety
	\begin{equation} \label{eqn:multinorm}
		W: \quad \Norm_{E/\QQ}(t) = L_1(\bx)^{a_1}\cdots L_r(\bx)^{a_r} \subset \mathbb{A}^{[E:\QQ]} \times \mathbb{A}^n,
	\end{equation}
	equipped with the projection $\pi:V \to \PP^n$ coming from the $\bx$-coordinate. Assume that $V$ has a smooth fibre which is everywhere locally soluble. Then
	$$N_{\mathrm{loc}}(\pi,B) \asymp \frac{B^{n+1}}{(\log B)^{\Delta(\pi)}}.$$
\end{theorem}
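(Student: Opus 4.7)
The upper bound on $N_{\mathrm{loc}}(\pi,B)$ is furnished by \eqref{eqn:LS}, so the task reduces to a matching lower bound. My plan is to translate the condition that a fiber $V_\bx$, for primitive $\bx \in \ZZ^{n+1}$, is everywhere locally soluble into a product of frobenian multiplicative conditions on $L_1(\bx),\ldots,L_r(\bx)$, and then invoke Theorem~\ref{thm:frob}. This route works in arbitrary dimension because the singular locus of $\pi$ is the union of hyperplanes $\{L_j = 0\}$, so no codimension $\geq 2$ phenomena appear and no finitely generated extensions of $\QQ$ intervene.

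For all but finitely many primes $p$ (namely those unramified in each $E_i$ and at which the $L_j$ have good reduction), local solubility of $V_\bx$ at $p$ is equivalent to $L_1(\bx)^{a_1}\cdots L_r(\bx)^{a_r}$ lying in the multinorm subgroup $\prod_{i=1}^s N_{E_{i,p}/\QQ_p}(E_{i,p}^\ast) \subset \QQ_p^\ast$. Since $p$ is unramified in each $E_i$, this subgroup consists of those $c \in \QQ_p^\ast$ with $v_p(c) \in d_p \ZZ$, where $d_p$ is the gcd of the residue degrees of places of $E_1,\ldots,E_s$ above $p$. For primitive $\bx$ outside the thin set where two of the $L_j(\bx)$ share a common prime factor $p > H(\bx)^\eps$, the condition at $p$ factorises across the indices $j$: it reads $a_j v_p(L_j(\bx)) \equiv 0 \pmod{d_p}$ for the unique $j$ with $p \mid L_j(\bx)$. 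I would therefore define frobenian multiplicative functions $\rho_j$ by $\rho_j(p^k) = 1$ if $k a_j \equiv 0 \pmod{d_p}$ and $0$ otherwise. Since $d_p$ is governed by the Frobenius conjugacy class in $\Gal(\tilde E/\QQ)$ for a suitable Galois closure $\tilde E$ of the compositum, each $\rho_j$ is indeed frobenian in the sense of \S\ref{sec:frob}.

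Archimedean solubility and solubility at the finitely many bad primes would be secured by restricting $\bx$ to an open box $B\mathfrak{K} + \boldsymbol{a}$ with $\boldsymbol{a}$ chosen in a congruence class modulo some integer $M$ supported at the bad primes, and $\mathfrak{K}$ chosen so that the linear parts of the $L_j$ are simultaneously positive on an open subset. The hypothesis that some smooth fiber of $\pi$ is everywhere locally soluble, combined with a standard approximation argument, provides such $\mathfrak{K}$ and $\boldsymbol{a}$, and simultaneously yields an integer point $\bx_0$ with $\rho_j(L_j(\bx_0)) > 0$ for all $j$. The exceptional set of $\bx$ with two $L_i(\bx), L_j(\bx)$ sharing a large common prime factor is controlled by an upper-bound sieve. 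In the complement, the indicator of everywhere local solubility factorises as $\prod_j \rho_j(L_j(\bx))$ times the indicator of a congruence mod $M$, and Theorem~\ref{thm:frob} produces the asymptotic
$$\sum_{\bx \in (B\mathfrak{K}+\boldsymbol{a}) \cap \ZZ^{n+1}} \prod_{j=1}^r \rho_j(L_j(\bx)) = (C + o(1))\, B^{n+1} \prod_{j=1}^r (\log B)^{m(\rho_j)-1}$$
with $C > 0$ by the positivity criterion in Theorem~\ref{thm:frob}.

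The main obstacle will be the computation identifying the resulting exponent $\sum_j (1 - m(\rho_j))$ with the geometric quantity $\Delta(\pi)$ defined in \S\ref{sec:notation}. This amounts to comparing, for each hyperplane $D_j = \{L_j = 0\} \subset \PP^n$, the Chebotarev density on the Frobenius classes $g \in \Gal(\tilde E/\QQ)$ for which $\rho_j$ is supported (equivalently, for which $d_p \mid a_j$ modulo the cycle structure) with the Galois-theoretic pseudo-split/non-pseudo-split dichotomy on the components of the fiber of $\pi$ above the generic point of $D_j$. Once this identity is established, the lower bound $N_{\mathrm{loc}}(\pi,B) \gg B^{n+1}/(\log B)^{\Delta(\pi)}$ follows, completing the proof.
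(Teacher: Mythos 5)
Your plan reproduces the broad strategy of the paper's proof (upper bound from \cite{LS16}, detector functions built from the local norm condition at unramified primes, Theorem~\ref{thm:frob} or~\ref{thm:frob_applications} for the asymptotic, identification of means with densities via a Chebotarev argument). However, there is a genuine gap: you have forgotten the hyperplane at infinity $D_0 = \{x_0 = 0\}$, and this breaks the argument. When you pass from the affine point $(x_1/x_0,\dots,x_n/x_0)$ to a primitive vector $\xx \in \ZZ^{n+1}$, the right-hand side of \eqref{eqn:multinorm} becomes $\prod_{j} \mathbf{L}_j(\xx)^{a_j} / x_0^{\sum_j a_j}$, where $\mathbf{L}_j$ denotes the homogenisation. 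The local norm condition at a prime $p$ therefore constrains $\sum_j a_j v_p(\mathbf{L}_j(\xx)) - (\sum_j a_j) v_p(x_0)$ modulo the local index $d_p$, not just $\sum_j a_j v_p(\mathbf{L}_j(\xx))$. In particular, at primes $p \mid x_0$ there is a genuine new condition you have dropped. Your claim that the singular locus is the union of the $\{L_j = 0\}$ is false after compactifying to $\PP^n$: the fibre over $D_0$ is generically non-pseudo-split, and $D_0$ contributes a term $1 - \delta_{D_0}(\pi)$ to $\Delta(\pi)$. Concretely, the set of $\xx$ with $\prod_{j=1}^r \rho_j(\mathbf{L}_j(\xx)) > 0$ is \emph{not} contained in $\pi(V(\Adele_\QQ))$, so your sum does not give a lower bound; had it done so, you would obtain $\gg B^{n+1}/(\log B)^{\sum_{j=1}^r(1-m(\rho_j))}$, which (whenever $\delta_{D_0}(\pi)<1$) is strictly larger than the upper bound $\ll B^{n+1}/(\log B)^{\Delta(\pi)}$, a contradiction.

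The fix, which is what the paper does, is to introduce an additional detector $\varpi_0(x_0)$ attached to $\mathbf{L}_0 := x_0$ with an exponent $a_0$ chosen in the appropriate residue class modulo $e = \gcd_i [E_i:\QQ]$, so that $\prod_{j=0}^r \mathbf{L}_j(\xx)^{a_j}$ differs from the actual right-hand side by an $e$-th power, hence by a global norm. Then $\prod_{j=0}^r \varpi_j(\mathbf{L}_j(\xx)) = 1$ really is a sufficient condition for local solubility at all large $p$, and the resulting exponent $\sum_{j=0}^r (1 - m(\varpi_j))$ matches $\Delta(\pi)$ via a Chebotarev calculation combined with \cite[Thm.~5.4]{LS16}. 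Two smaller remarks: (a) your $\rho_j$ (with $\rho_j(p^k)=1$ iff $d_p \mid k a_j$) are not completely multiplicative, so you cannot feed them into Theorem~\ref{thm:frob_applications} directly to handle the primitivity and local constraints; the paper's completely multiplicative $\varpi_j$ (which give the same means) avoid this. (b) The ``upper-bound sieve for the thin set where two $L_j$ share a large prime factor'' is unnecessary for a lower bound --- one simply uses the sufficient condition, which already handles those $\bx$ correctly once the $x_0$-factor is in place.
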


If $E/\QQ$ satisfies the Hasse norm principle, then we obtain a result for rational points.
This holds for example if $s=1$ and $E_1/ \QQ$ is cyclic (Hasse norm theorem), or  $s =2$ and the Galois closures of $E_1$ and $E_2$ are linearly
disjoint \cite{PR13}; see also \cite{DW14} for related results and references.
Taking $E/\QQ$ a quadratic field extension, $n=1$ and $a_j = 1$,
we recover Example~\ref{ex:explicit_conics}.

\subsection{Methodology and structure of the paper}
In \S \ref{sec:frob} we study the basic properties of frobenian multiplicative functions. In \S \ref{sec:Lilian} we prove our main analytic result (Theorem \ref{thm:frob}) using tools from additive combinatorics, from which all other counting results in this paper will be derived.

Theorem \ref{thm:Serre} is proved in \S \ref{sec:Serre}. The key new idea is to construct frobenian multiplicative functions which can be used to detect whether a fibre is everywhere locally soluble. Such detectors have previously only been constructed for special classes of varieties, e.g.~families of conics. Once we have these functions, the result follows from a suitable application of Theorem \ref{thm:frob}.

In  \S \ref{sec:RP} we prove Theorem \ref{thm:RP}. The proof is based on the proof of Theorem \ref{thm:Serre}, but much more subtleties arise coming from having to control the Brauer--Manin obstruction. We do this using ideas of Harpaz and Wittenberg \cite{HW16}.

In \S \ref{sec:general} we generalise our detector functions to pencils which may have non-split fibres over non-rational points. This construction is not required for our proofs, but is included to assist with future generalisations of our work.

The results concerning higher dimensional bases are proved in \S \ref{sec:Brauer} and \S\ref{sec:norms}. We finish in \S \ref{sec:multiple} with the proof of Theorem \ref{thm:double}.

\subsection{Notation and terminology} \label{sec:notation}
For an abelian group $A$ and a prime $\ell$, we denote its $\ell$-primary torsion subgroup by $A\{\ell\}$.

A variety is an integral separated finite type scheme over a field.
For a point $x$ of a scheme $X$, we denote by $\kappa(x)$ its residue field. 
All cohomology is taken with respect to the \'etale topology.

We denote by $\Val(k)$ the set of all non-archimedean places of a number field $k$. For a prime $p$ we denote by $v_p$ the associated
$p$-adic valuation.

The notation $O, \ll,\gg$ have their standard meaning in analytic number theory (Landau and Vinogradov notation, respectively). We say that $f \asymp g$ if $f \ll g$ and $g \ll f$.

\begin{definition}
Let $k$ be a perfect field with algebraic closure $\bar{k}$ and $X$ a finite type scheme over $k$. The absolute Galois group $\Gal(\bar{k}/k)$ acts on the geometric irreducible components of $X$, i.e.~the irreducible components of $X \otimes_k \bar{k}$.
We say that $X$ is \emph{split} \cite[Def.~0.1]{Sko96} (resp.~\emph{pseudo-split} \cite[Def.~1.3]{LSS17}) if $\Gal(\bar{k}/k)$ (resp.~every element of $\Gal(\bar{k}/k)$) fixes some geometric irreducible component of multiplicity $1$.
\end{definition}

\begin{definition} \label{def:Delta}
	Let $\pi:V \to X$ be a dominant proper morphism of smooth irreducible varieties over a perfect
	field $k$. For each point
	$x \in X$, we choose some finite group $\Gamma_x$ through which 
	the absolute Galois group $\Gal(\overline{\kappa(x)}/ \kappa(x))$ 
	acts on the irreducible 	components of $\pi^{-1}(x)_{\overline{\kappa(x)}}:=\pi^{-1}(x) \otimes_{\kappa(x)} \overline{\kappa(x)}$.
	We define
	$$\delta_x(\pi) = \frac{\# \left\{ \gamma \in \Gamma_x : 
	\begin{array}{l}
		\gamma \text{ fixes an irreducible component} \\
		\text{of $\pi^{-1}(x)_{\overline{\kappa(x)}}$ of multiplicity } 1
	\end{array}
	\right \}}
	{\# \Gamma_x }.$$
	Let $X^{(1)}$ denote the set of codimension $1$ points of $X$. Then
	we let
	$$\Delta(\pi) = \sum_{D \in X^{(1)}} ( 1 - \delta_D(\pi)).$$
\end{definition}

\subsection*{Acknowledgements}
We are grateful to Tim Browning, Jean-Louis Colliot-Th\'{e}l\`{e}ne, J\"{o}rg Jahnel, David Harari, Dasheng Wei, and Olivier Wittenberg for helpful discussions. We are also indebted to Jean-Louis Colliot-Th\'{e}l\`{e}ne for help
with the proof of Theorem~\ref{thm:double}.
We are grateful to two anonymous referees for their comments on different parts of this paper.
Loughran is supported by  EPSRC grant EP/R021422/1 and UKRI Future Leaders Fellowship MR/V021362/1.
Matthiesen is supported by the Swedish Research Council Grant No.\ 2016-05198.

\section{Frobenian multiplicative functions} \label{sec:frob}

\subsection{Frobenian functions}
We begin by recalling some of the theory of \emph{frobenian functions}, following  Serre's treatment \cite[\S3.3]{Ser12}.
\begin{definition} \label{def:frob}
	Let $\rho: \Val(\QQ) \to \CC$ be a function. We say that $\rho$ is \emph{frobenian}
	if there exist 
	\begin{enumerate}
		\item[(a)] A finite Galois extension $K/\QQ$, with Galois group $\Gamma$;
		\item[(b)] A finite set of primes $S$ containing all the primes ramifying in $K$;
		\item[(c)] A class function $\varphi: \Gamma \to \CC$;
	\end{enumerate}
	such that for all $p \not \in S$ we have
	$$\rho(p) = \varphi(\Frob_p),$$
	where $\Frob_p \in \Gamma$ is the Frobenious element of $p$.
	We define the \emph{mean} of $\rho$ to be 
	$$m(\rho) = \frac{1}{|\Gamma|} \sum_{\gamma \in \Gamma}\varphi(\gamma).$$ 
\end{definition}
In the definition, recall that a \emph{class function} on a group $\Gamma$ is a function which is constant on the conjugacy classes of $\Gamma$. In particular $\varphi(\Frob_p)$ is well-defined, despite $\Frob_p$ only being well-defined up to conjugacy.

A subset of $\Val(\QQ)$ is called \emph{frobenian} if its indicator function is frobenian.
A basic example of a frobenian set is the set of all primes which are completely split in a finite extension $L/\QQ$. 

\begin{example}
	Let $\chi: \ZZ \to \CC^*$ be a Dirichlet character mod $n$. We claim that the function
	$p \mapsto \chi(p)$ is  frobenian.
	In the notation of Definition \ref{def:frob} one takes $K = \QQ(\zeta_n)$
	and $S = \{ p \mid n \}$, where $\zeta_n$ is a primitive $n$th root of unity.
	The map $\psi:(\ZZ/n\ZZ)^* \to \Gamma$ given by $m \mapsto (\zeta_n \mapsto \zeta_n^m)$
	is an isomorphism, and we have $\psi(p \bmod n) = \Frob_p$. 
	We then take $\varphi = \chi \circ \psi^{-1}$ and note that
	$\chi(p) = \varphi(\Frob_p)$ is thus frobenian.
\end{example}

There is an alternative way to view frobenian functions which makes it easier to relate different frobenian functions. Namely, let $G = \Gal(\bar{\QQ}/\QQ)$ and consider a frobenian function $\rho$ with associated class function $\varphi: \Gamma \to \CC$. Then we can write $\Gamma = G/N$ for some normal open subgroup $N$ and view $\varphi$ as an $N$-invariant class function on $G$. With this perspective, we equip $G$ and $N$ with their
	Haar probability measures, so that the quotient measure on $\Gamma$ is also the Haar
	probability measure. Then the mean of $\rho$ is 
	easily seen to be given by the formula
	\begin{equation} \label{eqn:m_N}
		m(\rho) = \int_{G} \varphi(g) \mathrm{d} g.
	\end{equation}
Using this one obtains the following.

\begin{lemma} \label{lem:product_frob}
	Let $\rho_1$ and $\rho_2$ be frobenian functions. Then $\rho_1 \cdot \rho_2$ is 
	also frobenian.
\end{lemma}
\begin{proof}
	Consider the associated finite sets of primes $S_i$ and $N_i$-invariant
	class functions $\varphi_i: G=\Gal(\bar{\QQ}/\QQ) \to \CC$ for $i \in \{1,2\}$.
	Then $\varphi_1\varphi_2$ is $(N_1 \cap N_2)$-invariant and 
	$(\rho_1\rho_2)(p) = (\varphi_1\varphi_2)(\Frob_p)$ for all $p \in S_1 \cup S_2$,
	where $\Frob_p$ denotes the frobenuis element of $G/(N_1 \cap N_2)$.
\end{proof}

\begin{lemma} \label{lem:primes}
	Let $\rho$ be a frobenian function with $m(\rho) \neq 0$. Then as $x \to \infty$:
	\begin{enumerate}
		\item $$\sum_{p \leq x} \rho(p) = m(\rho)\cdot \Li(x) + O\left(x \exp(-c \sqrt{\log x})\right), \quad \mbox{for some } c>0,$$
		where $\Li(x) = \int_{2}^\infty \mathrm{d} t/\log t$ denotes the logarithmic integral.
		\item $$\sum_{p \leq x} \frac{\rho(p)}{p} = m(\rho) \log \log x +C_{\rho} + O\Big(\frac{1}{\log x}\Big), \quad \mbox{for some constant } C_\rho.$$
		\item $$\sum_{p \leq x} \rho(p) \log p = m(\rho)\cdot x + O\left(x \exp(-c \sqrt{\log x})\right), \quad \mbox{for some } c>0.$$
		\item $$\prod_{\substack{p \leq x \\ |\rho(p)| < p  }} \left( 1 + \frac{\rho(p)}{p}\right) \sim C'_{\rho} (\log x)^{m(\rho)}, \quad \mbox{for some } C_\rho' \neq 0,$$
		where $C_\rho'$ is real and positive when $\rho$ is real-valued. 
	\end{enumerate}
\end{lemma}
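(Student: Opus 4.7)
The plan is to derive part (1) from the effective Chebotarev density theorem and then obtain parts (2)--(4) via partial summation and standard Mertens-type manipulations. Since $\rho$ is frobenian with data $(K/\QQ,S,\varphi)$, I first expand $\varphi = \sum_C \varphi(C)\mathbf{1}_C$ over the conjugacy classes $C$ of $\Gamma = \Gal(K/\QQ)$, so that for $p \notin S$ one has $\rho(p) = \sum_C \varphi(C)\,\mathbf{1}_{\Frob_p \in C}$ and $m(\rho) = \sum_C (|C|/|\Gamma|)\varphi(C)$.

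For (1) I invoke the unconditional Lagarias--Odlyzko effective form of Chebotarev's density theorem, which gives
$$\#\{p \leq x : \Frob_p \in C\} = \frac{|C|}{|\Gamma|}\Li(x) + O\bigl(x\exp(-c\sqrt{\log x})\bigr),$$
with $c>0$ and the implied constant depending only on $K$. Weighting by $\varphi(C)$, summing over the conjugacy classes of $\Gamma$, and absorbing the bounded contribution from the finite set $p \in S$ yields (1). Note that the hypothesis $m(\rho)\neq 0$ is not yet needed; it only governs the size of the main term.

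For (2) and (3), Abel summation applied to $T(x) := \sum_{p \leq x}\rho(p) = m(\rho)\Li(x) + E(x)$ with $E(x) \ll x\exp(-c\sqrt{\log x})$ reduces the task to two integral computations. For (2),
$$\sum_{p\leq x}\frac{\rho(p)}{p} = \frac{T(x)}{x} + \int_2^x \frac{T(t)}{t^2}\,dt,$$
where $\int_2^x \Li(t)/t^2\,dt \sim \log\log x$ (since $\Li(t)/t^2 \sim 1/(t\log t)$), the tail $\int_x^\infty E(t)/t^2\,dt$ is absolutely convergent, and the residual error is $O(1/\log x)$; the leftover absolute constants combine into $C_\rho$. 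A parallel computation with $\log t$ in place of $1/t$ yields (3). For (4), take logarithms: since $|\rho(p)| = O(1)$ one has $\log(1 + \rho(p)/p) = \rho(p)/p + O(1/p^2)$ for all but finitely many primes, the tail $\sum O(1/p^2)$ is absolutely convergent, and only finitely many factors need to be treated individually (the exclusion $\rho(p)\neq -p$ guarantees no logarithmic singularity). Combining with (2) and exponentiating produces $C_\rho'(\log x)^{m(\rho)}$, with $C_\rho' > 0$ as the exponential of a real constant.

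The main obstacle is part (1): securing an effective Chebotarev with the exponential saving. Unconditionally, this ultimately rests on Brauer induction, which expresses the Artin $L$-function attached to $K/\QQ$ as an integer-power product of Hecke $L$-functions over intermediate abelian subextensions, combined with the classical Hadamard--de la Vall\'ee-Poussin zero-free region for those Hecke $L$-functions. Once (1) is available, parts (2)--(4) follow by routine partial summation and absolute convergence, so the entire analytic difficulty is concentrated in the first assertion.
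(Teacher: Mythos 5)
Your proof is correct and follows essentially the same route as the paper: the paper derives (1) by citing Serre's version of the effective Chebotarev density theorem (itself a packaging of the Lagarias--Odlyzko/Brauer-induction argument you describe), then obtains (2) and (3) by partial summation and (4) by taking logarithms and invoking (2).
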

\begin{proof}
	The first part is Serre's version of the Chebotarev density
	theorem \cite[Thm.~3.6]{Ser12}. The second and third part follow from partial summation. The fourth
	part follows from the second part on taking logs. Observe that the product in the fourth part runs over all but finitely many primes since $\rho$ is bounded.
\end{proof}

\subsubsection{Twisting by a Dirichlet character}

\begin{lemma}  \label{lem:finitely-many}
	Let $\rho$ be a frobenian function. 
	\begin{enumerate}
		\item Only finitely many primitive Dirichlet characters
	$\chi$ satisfy
	$m(\rho \chi)\neq 0.$	
	\end{enumerate}
	Assume that $\rho$ is real valued and non-negative and let $\chi$ be a 
	Dirichlet character.
	\begin{enumerate}
		\item[(2)] We have $|m(\rho\chi)| \leq m(\rho).$
		\item[(3)] The following are equivalent:
		\begin{enumerate}
			\item $|m(\rho\chi)| =m(\rho) $;
			\item $m(\rho \chi) = m(\rho)$;
			\item $\rho \chi(p) = \rho(p)$ 
			for all but finitely many primes $p$.
		\end{enumerate}
	\end{enumerate}
\end{lemma}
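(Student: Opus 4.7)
The plan is to realise both $\rho$ and $\chi$ as data on the same Galois group by passing to the compositum $M = K \cdot \QQ(\zeta_n)$, where $K$ is the Galois field attached to $\rho$ with class function $\varphi$ and $n$ is the conductor of $\chi$. Writing $\chi_{\mathrm{cyc}} \colon \Gal(\QQ(\zeta_n)/\QQ) \to \CC^*$ for the character corresponding to $\chi$ and $G = \Gal(M/\QQ)$, the function $\rho\chi$ is frobenian on $M$ with class function $g \mapsto \varphi(g|_K)\chi_{\mathrm{cyc}}(g|_{\QQ(\zeta_n)})$, so
$$m(\rho\chi) = \frac{1}{|G|}\sum_{g \in G} \varphi(g|_K)\chi_{\mathrm{cyc}}(g|_{\QQ(\zeta_n)}). \qquad (\ast)$$
All three parts follow by extracting information from this single formula.

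For part (1), I would fix $\gamma = g|_K$ and sum the inner contributions over the coset $H\tilde\gamma$, where $H = \Gal(M/K)$ and $\tilde\gamma$ is a lift of $\gamma$. The resulting orthogonality sum in $\chi_{\mathrm{cyc}}$ vanishes unless $\chi_{\mathrm{cyc}}$ is trivial on the image of $H$ in $\Gal(\QQ(\zeta_n)/\QQ)$, equivalently unless $\chi_{\mathrm{cyc}}$ factors through $\Gal((K \cap \QQ(\zeta_n))/\QQ)$. Since $K \cap \QQ(\zeta_n)$ is an abelian subextension of $K$, Kronecker--Weber places it in a fixed cyclotomic field $\QQ(\zeta_{m_0})$ depending only on $K$. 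Hence every primitive $\chi$ with $m(\rho\chi) \ne 0$ has conductor dividing $m_0$, giving the claimed finiteness.

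For part (2), the hypothesis $\rho \geq 0$ combined with Chebotarev (every conjugacy class is hit by some Frobenius) forces $\varphi \geq 0$ on $\Gamma$. Together with $|\chi_{\mathrm{cyc}}(\sigma)| = 1$, and thus $\re \chi_{\mathrm{cyc}}(\sigma) \leq 1$, formula $(\ast)$ yields
$$\re m(\rho\chi) \leq \frac{1}{|G|} \sum_{g \in G} \varphi(g|_K) = m(\rho),$$
where the final equality uses that $g \mapsto g|_K$ is surjective onto $\Gamma$ with fibres of equal size. Part (3) then unwinds from tracking this inequality. The implication (c) $\Rightarrow$ (b) holds because if $\rho\chi(p)=\rho(p)$ for almost all $p$, Chebotarev (applied now to conjugacy classes of $G$) forces the class functions of $\rho\chi$ and $\rho$ on $G$ to agree, so their means coincide; (b) $\Rightarrow$ (a) is trivial. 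For (a) $\Rightarrow$ (c), equality in the bound just derived forces $\re \chi_{\mathrm{cyc}}(g|_{\QQ(\zeta_n)}) = 1$ whenever $\varphi(g|_K) > 0$, which upgrades to $\chi_{\mathrm{cyc}}(g|_{\QQ(\zeta_n)}) = 1$ since $\chi_{\mathrm{cyc}}$ takes values on the unit circle; specialising to $g = \Frob_p$ via Chebotarev gives $\chi(p) = 1$ at every prime $p$ (outside a finite set) with $\rho(p) > 0$, and hence $\rho\chi(p) = \rho(p)$ for all but finitely many $p$.

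The conceptual weight of the argument lies in part (1), where the task is to convert a non-vanishing condition on an orthogonality sum into a uniform conductor bound. The key input is Kronecker--Weber, which prevents the conductor from escaping to infinity along the ramified primes of $K$. Parts (2) and (3) are soft applications of $|\chi| \leq 1$ together with the Chebotarev density theorem.
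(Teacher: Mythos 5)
Your proof is correct and takes essentially the same approach as the paper: write $m(\rho\chi)$ as an average over a Galois group of the class function for $\rho$ times the character for $\chi$, use orthogonality for part (1), the triangle inequality (or $\re \chi \leq 1$) for part (2), and analyse the equality case for part (3). The main cosmetic difference is that the paper works with the absolute Galois group $\Gal(\bar\QQ/\QQ)$ equipped with Haar measure and observes directly that $m(\rho\chi)\neq 0$ forces $\chi$ to be trivial on $N$ and hence a character of $\Gamma$ (of which there are finitely many), whereas you work in the finite compositum $M = K\cdot\QQ(\zeta_n)$ and then invoke Kronecker--Weber to bound the conductor; the latter step is an unnecessary detour, since once you know $\chi_{\mathrm{cyc}}$ factors through $\Gal((K\cap\QQ(\zeta_n))/\QQ)$, a quotient of the fixed finite group $\Gal(K/\QQ)^{\mathrm{ab}}$, finiteness follows immediately without any appeal to conductors.
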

\begin{proof}
	First note that $\rho \chi$ is frobenian by Lemma \ref{lem:product_frob}.
	Let $\varphi: \Gamma \to \CC$ be a choice of class function associated to $\rho$,
	which we view as an $N$-invariant class
	function on $G=\Gal(\bar{\QQ}/\QQ)$ for some normal open subgroup $N$.
	Next, recall from class field theory that primitive Dirichlet characters are in one-to-one correspondence
	with continuous homomorphisms $G \to S^1$; namely the Artin map induces an isomorphism
	$\widehat{\mathbb{Z}}^*\cong G^{\mathrm{ab}}$, and primitive Dirichlet characters are
	exactly the characters of $\widehat{\mathbb{Z}}^*$. Let
	$\chi:G \to S^1$ be such a homomorphism, which by abuse of notation
	we identity with the corresponding primitive Dirichlet character.
	First assume that $\chi$ is non-trivial on $N$. Then by \eqref{eqn:m_N} we have
	\begin{align*}
		m(\rho\chi) &= \int_{G} \varphi(g) \chi(g) \mathrm{d} g 
		 = \frac{1}{|\Gamma|}\sum_{\gamma \in \Gamma} \int_{N} \varphi(\gamma n) \chi(\gamma n) \mathrm{d} n \\
		& = \frac{1}{|\Gamma|}\sum_{\gamma \in \Gamma} \varphi(\gamma) \chi(\gamma)\int_{N}  \chi(n) \mathrm{d} n
		= 0
	\end{align*}
	where the last line is by character orthogonality and the fact that $\chi$ is non-trivial on $N$.
	It follows that if $m(\rho \chi ) \neq 0$ then $\chi$ is trivial on $N$. But then $\chi$ is 
	just a character of $\Gamma$, of which there are only finitely many. This proves (1).
	
	For (2), note that $\varphi$ is also real and non-negative. We thus have
	\begin{equation} \label{eqn:re}
		|m(\rho\chi)| = \frac{1}{|\Gamma|} \left|\sum_{\gamma \in \Gamma} \varphi(\gamma) \chi(\gamma) \right| 
	\leq \frac{1}{|\Gamma|} \sum_{\gamma \in \Gamma} \varphi(\gamma) = m(\rho),
	\end{equation}
	as required, on using $|\chi| = 1$.
	To prove (3), we use the following fact:
	\begin{equation}
	\mbox{if $z_1,\dots,z_n \in \CC$ and $|z_1| + \dots + |z_n| = z_1 + \dots + z_n$,
	then $z_i = |z_i| \, \forall \, i$.} \label{eqn:z_i}
	\end{equation}
	Assume (a) holds. Then by (a), $|\chi|= 1$, \eqref{eqn:re} 
	and \eqref{eqn:z_i}	we have $m(\rho\chi) \in \RR_{>0}$, whence (b).
	Assume (b),	so that
	$$\sum_{\gamma \in \Gamma} \varphi(\gamma) = \sum_{\gamma \in \Gamma} \varphi(\gamma) \chi(\gamma).$$
	As $|\varphi(\gamma) \chi(\gamma)| = \varphi(\gamma)$ 
	for all $\gamma \in \Gamma$, we deduce that 
	$\varphi(\gamma) \chi(\gamma) = \varphi(\gamma)$ for all $\gamma$,
	which proves (c) as  our functions are frobenian.
	Finally (c) easily implies (b), which obviously implies (a),
	as required.
\end{proof}

\subsection{Frobenian multiplicative functions}
We now introduce the class of multiplicative functions that appear in the statement of Theorem \ref{thm:frob}.
Such multiplicative functions will play a prominent r\^{o}le throughout the paper.

\begin{definition} \label{def:e-weak_frobenian}
	Let  $\eps \in (0,1)$ and let  $\rho: \NN \to \CC$ be a  multiplicative function. 
	We say that $\rho$ is an \emph{$\eps$-weak frobenian multiplicative
	function} if 
	\begin{enumerate}
		\item The restriction of $\rho$ to the set of primes is a frobenian function, in the sense of Definition \ref{def:frob}.
		\item $|\rho(n)| \ll_\varepsilon n^\varepsilon$ for all $n \in \NN$.
		\item There exists $H \in \NN$ such that $|\rho(p^k)| \leq H^k$ for all primes $p$ and all $k \geq 1$.
	\end{enumerate}
	We define the \emph{mean} of $\rho$ to be the mean of the corresponding frobenian function.
\end{definition}


\begin{definition} \label{def:class_F}
 Let  $\rho: \NN \to \CC$ be a  multiplicative function. 
 We say that $\rho$ is a \emph{frobenian multiplicative function} if it is \emph{$\eps$-weak frobenian} for all 
 $\eps \in (0,1)$.
\end{definition}

If $\rho_1$ and $\rho_2$ are ($\eps$-weak) frobenian multiplicative functions, then, by Lemma \ref{lem:product_frob} and Definition \ref{def:e-weak_frobenian}, so is $\rho_1\rho_2$.
In particular $\rho\chi$ is a
frobenian multiplicative function for a Dirichlet character $\chi$ and frobenian multiplicative function $\rho$.

\begin{lemma}\label{lem:sum-rho}
	Let $\eps \in (0,1)$ and $\rho$ be an $\eps$-weak frobenian multiplicative function.
	Then,
	$$\sum_{n \leq x} \rho(n) = c_\rho x (\log x)^{m(\rho)-1}
	 + O(x (\log x)^{m(\rho)-2}) ,$$
    where
    $$
	 c_\rho 
	 = \prod_{p \text{ prime}} 
	   \left(1 + \frac{\rho(p)}{p} + \frac{\rho(p^2)}{p^2} + \dots \right)
	   \left(1-\frac{1}{p}\right)^{m(\rho)}.
	$$
If $\rho$ is real-valued and non-negative with $m(\rho) \neq 0$, then $c_\rho$ is  real and positive.
\end{lemma}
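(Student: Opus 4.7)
The strategy is to invoke the Selberg--Delange method, whose input hypotheses are tailored precisely to the conclusion of Proposition~\ref{prop:frob_zeta}. That proposition supplies a factorisation $F(s) = \zeta(s)^{m(\rho)} G(s)$ on $\re s > 1$ with $G$ holomorphic in a region of the form $\re s > 1 - C''_\rho/\log(|\im s|+3)$ and satisfying $|G(s)| \ll (1+|\im s|)^{1/2}$ there. This is exactly the shape of input needed to extract an asymptotic with a power-of-log main term coming from the branch-type singularity at $s = 1$.

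First I would apply a truncated Perron formula on the line $\re s = 1 + 1/\log x$ to obtain
$$\sum_{n \leq x} \rho(n) = \frac{1}{2\pi i}\int_{1+1/\log x - iT}^{1+1/\log x + iT} F(s)\,\frac{x^s}{s}\,ds + O\bigl(x(\log x)^{-A}\bigr)$$
for any fixed $A>0$, on choosing $T = \exp(\sqrt{\log x})$ (say) and using the $\varepsilon$-weak bound on $\rho$ to control tails. Then I would deform the contour to a Hankel-type contour hugging the cut $(-\infty,1]$ of $\zeta(s)^{m(\rho)}$. The bounds on $G$, combined with standard convexity estimates for $\zeta(s)^{m(\rho)}$ in the same zero-free region, absorb the horizontal and far vertical pieces of the deformation into an error of size $O(x(\log x)^{m(\rho)-2})$. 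The main term is produced by the Hankel integral around $s=1$, where $F(s)\sim G(1)(s-1)^{-m(\rho)}$; after the substitution $s = 1 + w/\log x$ and Hankel's integral representation of $1/\Gamma$, one obtains
$$\sum_{n \leq x} \rho(n) = \frac{G(1)}{\Gamma(m(\rho))}\,x(\log x)^{m(\rho)-1} + O\bigl(x(\log x)^{m(\rho)-2}\bigr),$$
identifying $c_\rho = G(1)/\Gamma(m(\rho))$. A convenient black-box packaging of this argument with the precise hypotheses we have in hand is Tenenbaum's version of Selberg--Delange, which may be cited directly.

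For the positivity claim, suppose $\rho$ is real-valued, non-negative and $m(\rho)\neq 0$. The class function $\varphi:\Gamma\to\CC$ attached to $\rho$ restricted to the primes then takes non-negative real values, so $m(\rho) = |\Gamma|^{-1}\sum_{\gamma}\varphi(\gamma)$ is a non-negative real number, and combined with $m(\rho)\neq 0$ this gives $m(\rho) > 0$ and hence $\Gamma(m(\rho))>0$. By the final assertion of Proposition~\ref{prop:frob_zeta}, the value $G(1) = \lim_{s\to 1}(s-1)^{m(\rho)}F(s)$ is a convergent product of strictly positive real factors, so $G(1)>0$ and therefore $c_\rho > 0$.

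The only real technical obstacle is the bookkeeping of error terms in the contour shift, but this is entirely routine once the analytic control on $G$ given by Proposition~\ref{prop:frob_zeta} is available; the substantive input was concentrated in that proposition, which was formulated in exactly the form needed here.
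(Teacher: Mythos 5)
Your proposal is correct and follows essentially the same route as the paper, which also invokes the Selberg--Delange method (citing Tenenbaum, Thm.~II.5.3) as the black box applied to the output of Proposition~\ref{prop:frob_zeta}. Your unpacking of the Perron/Hankel mechanism and your explicit derivation of the positivity of $c_\rho = G(1)/\Gamma(m(\rho))$ merely spell out steps the paper leaves implicit.
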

\begin{proof}
    In view of Lemma \ref{lem:primes} (3), this result follows immediately from \cite[Thm.~1.2]{BrTen21}, where $A$ may be taken arbitrarily large, the value of $\rho$ in that statement is given by $m(\rho)$, $r=H$ and $\max(1/2,\eps) < \sigma < 1$. For completeness, we show that the second condition in \cite[(1.11)]{BrTen21} is indeed satisfied, that is 
    \begin{equation} \label{eq:BrTen-assumption}
	\sum_p \bigg\{ \frac{|\rho(p)|^2}{p^{2\sigma}} + \sum_{\nu\geq 2} \frac{|\rho(p^{\nu})|}{p^{\nu \sigma}}\bigg\} < \infty
	\end{equation}
	holds. For any fixed $\nu \geq 2$, part (3) of Definition \ref{def:e-weak_frobenian} implies
	$$
	\sum_{p} |\rho(p^\nu)| p^{-\sigma \nu} \leq H^{\nu} \sum_{p} p^{-2\sigma} < \infty
	$$
	since $\sigma > 1/2$, and the same estimate holds with $|\rho(p^\nu)|$ replaced by $|\rho(p)|^\nu$.
	This estimate holds in particular for $2 \leq \nu \leq 2/ \sigma$. 
	If $\nu > 2/\sigma$, then part (3) of Definition \ref{def:e-weak_frobenian} implies
	$$
	\sum_{p>(2H)^{2/\sigma}} |f(p^\nu)|  p^{-\sigma \nu} 
	\leq \sum_{p>(2H)^{2/\sigma}} 2^{-\nu} p^{-\sigma \nu/2} 
	\ll  2^{-\nu} ((2H)^{2/\sigma})^{-\sigma \nu/2 + 1} \ll (4H)^{-\nu}
	$$
	and $\sum_{\nu > 2/\sigma} (4H)^{-\nu} < \infty$.
	For the remaining sum over small primes, part (2) implies that
	$$\sum_{p\leq H^{2/\sigma}} \sum_{\nu \geq 2} |\rho(p^\nu)| p^{-\sigma \nu} 
	\ll \sum_{p\leq H^{2/\sigma}} \sum_{\nu \geq 2} p^{(-\sigma+\eps)\nu} < \infty ,$$
	since $\sigma > \eps$.
	The fact that $c_{\rho}$ converges as well as the final part of our lemma follow from \eqref{eq:BrTen-assumption} and Lemma \ref{lem:primes}.
	
	We observe that under the additional assumption that $|\rho| \leq \tau_H$, where $\tau_H$ denotes the multiplicative function with Dirichlet series $\zeta^{H}(s)$, the conclusion of the lemma would follow from \cite[Thm.~1]{GraKou19}. This would be sufficient for all later applications to $\{0,1\}$-valued frobenian multiplicative functions.
\end{proof}

\section{Frobenian multiplicative functions evaluated at linear polynomials} \label{sec:Lilian}
In this section we prove Theorem \ref{thm:frob}. 
The main technical tool upon which our proof relies is a special case of the main result from \cite{Mat16}, 
namely \cite[Theorem~2.1]{Mat16}. 
The first two subsections below contain the preparation for applying this tool.
In the first subsection, we describe a general class of multiplicative functions and verify that
frobenian multiplicative functions belong to that class.
In the second subsection, we deduce a version of the relevant result from \cite{Mat16} 
that is adjusted to our situation.
Finally, the third subsection contains the proof of Theorem \ref{thm:frob}.

\subsection{Set-up and verification}
Given any arithmetic function $h: \NN \to \CC$, $x\geq 1$ and $q,A \in \ZZ$, $q \not=0$, we define
$$
S_h(x)= 
\frac{1}{x} 
\sum_{\substack{1\leq n \leq x}} 
h(n), \qquad
S_h(x;q,A)= 
\frac{q}{x} 
\sum_{\substack{1\leq n \leq x \\ n\equiv A \Mod{q} }} 
h(n)$$
to be the average value of $h$ up to $x$ and the average value of $h$ in the progression $A \Mod{q}$ up to $x$, respectively.
Moreover, for $x>1$ and $q \in \NN$, let
\begin{align}
 E_{h}(x;q) &= \frac{1}{\log x} \frac{q}{\phi( q )}
\prod_{p \leq x, p \nmid q} \left(1 + \frac{|h(p)|}{p} \right).
\end{align}

The results from \cite{Mat16} apply to a general class $\mathcal{F}^*$ of multiplicative functions which contains the
following class $\mathcal{F}$ as a subset.
\begin{definition} \label{def:F}
Let $\mathcal{F}$ denote the class of multiplicative functions
$h: \NN \to \CC$ with the properties: 
\begin{itemize}
 \item[(i)] There exists a constant $H \in \NN$, depending on $h$, such that 
$|h(p^k)| \leq H^k$ for all primes $p$ and all integers $k \geq 1$;
 \item[(ii)] $|h(n)| \ll_{\eps,h} n^{\eps}$ for all $n \in \NN$ and all $\eps > 0$; 
 \item[(iii)] There exists a positive constant $\alpha_h$ such that 
$$
\frac{1}{x}
\sum_{p \leq x} |h(p)| \log p \geq \alpha_h 
$$
for all sufficiently large $x$; and
\item[(iv)] $h$ \emph{has a stable mean value in arithmetic progressions, i.e.:}
For every constant $C>0$, there exists a function 
$\psi_C$ with $\psi_C(x) \to 0$ as $x\to \infty$ such that the estimate
$$
S_{h}(x';q,A)
= S_{h}(x;q,A)
 + O\Big(\psi_C(x) E_{h}(x;q)\Big)
$$
holds for all $x \geq 2$ and $x' \in (x(\log x)^{-C},x)$,
and for all progressions $A \Mod{q}$ with $\gcd(q,A)=1$, 
where $1 < q \leq (\log x)^C$ and $p \mid q$ for every prime $p < \log \log x$.
\end{itemize}
\end{definition}

We note as an aside that if $h:\NN \to \CC$ is multiplicative and satisfies the conditions 
(i) and (ii) from above, then a special case of Shiu \cite[Theorem 1]{Shiu80} implies that, as $x \to \infty$,
we have $|S_h(x,q,A)| \leq S_{|h|}(x,q,A) \ll E_h(x;q)$
uniformly for all $q < x^{3/4}$ and $0<A<q$ such that $\gcd(A,q)=1$.

It is often easier to work with bounded multiplicative functions than with functions from the 
general class $\mathcal{F}$.
Similarly, working with completely multiplicative functions will often be easier than working with 
general multiplicative functions.
To handle the general case in our setting, we will make use of the following two tools that allow
us to reduce our case to either of the two easier settings:

In the setting of Definition \ref{def:F}, `bounded' corresponds to the case where we may take $H=1$. 
In order to invoke, even when $H>1$, results that a priori only apply to bounded multiplicative functions, 
we follow \cite{Mat14} and associate to any given $h \in \mathcal{F}$ with $H>1$ the bounded multiplicative 
function $g_h: \NN \to \CC$ whose values at prime powers are given by:
\begin{equation} \label{eq:def-g}
 g_h(p^k) =
 \begin{cases}
 h(p)/H, & \text{if } k=1,\cr
 0,& \text{if } k>1.
 \end{cases}
\end{equation}
If $H=1$, we set $g_h=h$.
The function $g_h$ is defined in such a way that $h$ can be decomposed as the 
convolution $h=g_h^{(*H)} * g'_h$ of $H$ copies of the bounded function $g_h$ and 
one copy of a function $g'_h$ that is (away from $1$) supported on square-full numbers only.
Observe that if $h$ is frobenian, then so is $g_h$.

The second tool addresses the problem that sieving becomes difficult when the 
function at hand is not completely multiplicative.
Recall the notion of an $\eps$-weak frobenian function from Definition \ref{def:e-weak_frobenian}.
\begin{lemma} \label{lem:tilde-rho}
 Let $\rho$ be a frobenian multiplicative function, let
 $H \in \NN$ be such that Definition \ref{def:e-weak_frobenian}(3) holds,
 let $q$ be a positive integer and let $\eps \in (0,1)$. 
 If $\tilde \rho$ is the multiplicative function defined via
 \begin{equation} \label{eq:def-tilde-rho}
 \tilde \rho (p^k) =
 \begin{cases}
 0, &\text{if } p\mid q \text{ and } p \leq H^{1/\eps},\cr
 \rho(p)^k, & \text{if } p\mid q \text{ and } p > H^{1/\eps},\cr
 \rho(p^k), &\text{if } p\nmid q,
 \end{cases}
\end{equation}
 then $\tilde \rho$ is $\eps$-weak frobenian and $m(\rho)=m(\tilde \rho)$.
\end{lemma}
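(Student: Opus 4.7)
The proof has two assertions to dispatch: that $\tilde\rho$ is $\eps$-weak frobenian, and that it has the same mean as $\rho$. The strategy is to observe that $\tilde\rho$ differs from $\rho$ only at prime powers $p^k$ with $p\mid q$, i.e.\ at finitely many primes, so the frobenian data (the field $K$, the group $\Gamma$, the class function $\varphi$) carries over verbatim after enlarging the exceptional set $S$.

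First I would handle the easier conditions. For condition (1) of Definition~\ref{def:e-weak_frobenian}, recall that $\rho$ restricted to primes satisfies $\rho(p)=\varphi(\mathrm{Frob}_p)$ for all $p\notin S$. Since $\tilde\rho(p)=\rho(p)$ for every $p\nmid q$, and also for every $p\mid q$ with $p>H^{1/\eps}$ (because $\tilde\rho(p)=\rho(p)^1=\rho(p)$ by the second case of \eqref{eq:def-tilde-rho}), the functions $\tilde\rho$ and $\rho$ agree on all primes outside the \emph{finite} set $S':=S\cup\{p:p\mid q\text{ and }p\leq H^{1/\eps}\}$. Thus $\tilde\rho$ restricted to primes is frobenian with the same class function $\varphi$, and by the very definition of the mean, $m(\tilde\rho)=m(\rho)$. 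For condition (3), I take the same constant $H$ as for $\rho$: at primes $p\nmid q$ we have $|\tilde\rho(p^k)|=|\rho(p^k)|\leq H^k$; at primes $p\mid q$ with $p\leq H^{1/\eps}$ we have $\tilde\rho(p^k)=0$; and at primes $p\mid q$ with $p>H^{1/\eps}$ we have $|\tilde\rho(p^k)|=|\rho(p)|^k\leq H^k$.

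The main (though still routine) point is condition (2), the polynomial bound $|\tilde\rho(n)|\ll_\eps n^\eps$. Here I would factor any $n\in\NN$ as $n=n_1 n_2 n_3$, where $n_1$ is supported on primes $p\nmid q$, $n_2$ on primes $p\mid q$ with $p\leq H^{1/\eps}$, and $n_3$ on primes $p\mid q$ with $p>H^{1/\eps}$. If $n_2\neq 1$ then $\tilde\rho(n)=0$ and the bound is trivial. Otherwise $\tilde\rho(n)=\rho(n_1)\,\tilde\rho(n_3)$; the factor $|\rho(n_1)|\ll_\eps n_1^\eps$ by the $\eps$-weak frobenian property of $\rho$, and for each prime power $p^k\mid n_3$ the inequality $p>H^{1/\eps}$ gives $H<p^\eps$, so
\[
|\tilde\rho(p^k)|=|\rho(p)|^k\leq H^k< p^{\eps k},
\]
whence $|\tilde\rho(n_3)|\leq n_3^\eps$. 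Multiplying yields $|\tilde\rho(n)|\ll_\eps n^\eps$.

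The only delicate point is the interplay between the parameter $\eps$ and the cutoff $H^{1/\eps}$: the cutoff is precisely chosen so that the otherwise non-trivial replacement $\rho(p^k)\mapsto \rho(p)^k$ at ramified primes does not spoil the $\eps$-subconvexity bound. No further input is needed beyond the definitions and the frobenian property of $\rho$.
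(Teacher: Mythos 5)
Your proof is correct and takes essentially the same approach as the paper: enlarge $S$ to absorb the finitely many primes $p\mid q$ with $p\leq H^{1/\eps}$ (so the class function and hence the mean are unchanged), use the threshold $p>H^{1/\eps}$ to convert the bound $|\rho(p)|\leq H$ into $|\tilde\rho(p^k)|\leq p^{\eps k}$, and retain the original $H$ for condition (3). You are slightly more explicit than the paper in verifying the $n^\eps$ bound of condition (2) via the factorisation $n=n_1n_2n_3$, where the paper states the prime-power bounds and leaves the multiplicativity step implicit.
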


\begin{proof} 
 Let $\varphi$ be a class function for which $\rho(p) = \varphi(\Frob_p)$ for all primes outside some set of places $S$. Enlarging $S$ if necessary to include all primes $p \leq H^{1/\eps}$, 
 part (1) of Definition \ref{def:e-weak_frobenian} holds for $\tilde \rho$ with the same class function $\varphi$. 
 Next, note that $|\tilde\rho(p^k)| \leq H^k \leq p^{\eps k}$ holds for all $p \mid q$ by construction. 
 For $p \nmid q$ we have $|\tilde\rho(p^k)| \ll p^{\eps k}$ since $\rho$ is frobenian. 
 Hence $\tilde\rho$ is $\eps$-weak frobenian.
  Since $m(\rho)$ only depends on the class function $\varphi$ from Definition \ref{def:frob}, 
 we have $m(\tilde\rho)=m(\rho)$.
\end{proof}

The rest of this subsection is devoted to proving the following proposition.

\begin{proposition} \label{lem:verification}
 If $\rho$ is a real-valued non-negative frobenian multiplicative function with $m(\rho) > 0$, then
 $\rho \in \mathcal{F}$.
\end{proposition}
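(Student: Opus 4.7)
The plan is to verify each of the four defining conditions of the class $\mathcal{F}$ from Definition \ref{def:F} in turn. Condition (i) is exactly part (3) of Definition \ref{def:e-weak_frobenian}, which $\rho$ satisfies by hypothesis. Condition (ii) follows from part (2) of Definition \ref{def:e-weak_frobenian}, since $\rho$ is $\eps$-weak frobenian for every $\eps \in (0,1)$. For (iii), note that $|\rho(p)| = \rho(p)$ as $\rho$ is non-negative, and Lemma \ref{lem:primes}(3) gives $\sum_{p \leq x}\rho(p)\log p \sim m(\rho)\,x$, so one may take $\alpha_h = m(\rho)/2$ for all sufficiently large $x$.

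The substantive work is verifying the stable-mean condition (iv). The approach is to combine orthogonality of Dirichlet characters with the Selberg--Delange method. Using $\gcd(A,q) = 1$, one writes
$$
\sum_{\substack{n \leq x \\ n \equiv A \Mod{q}}} \rho(n)
=
\frac{1}{\phi(q)} \sum_{\chi \bmod q} \bar\chi(A) \sum_{\substack{n \leq x \\ \gcd(n,q)=1}} \rho(n)\chi(n).
$$
For each character $\chi \bmod q$, the multiplicative function $n \mapsto \rho(n)\chi(n)\one_{\gcd(n,q)=1}$ is $\eps$-weak frobenian with mean $m(\rho\chi)$, and its Dirichlet series factors as $\zeta(s)^{m(\rho\chi)} G_{\chi,q}(s)$ by (the proof of) Proposition \ref{prop:frob_zeta}, with $G_{\chi,q}$ holomorphic in a suitable zero-free region uniformly in $q$. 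The Selberg--Delange method (as in Lemma \ref{lem:sum-rho}) then yields, for some $\delta > 0$,
$$
\sum_{\substack{n \leq x \\ \gcd(n,q)=1}} \rho(n)\chi(n) = c_{\chi,q}\, x (\log x)^{m(\rho\chi)-1} + O\!\left(x(\log x)^{\re m(\rho\chi)-1-\delta}\right).
$$
By Lemma \ref{lem:finitely-many}, only those finitely many $\chi$ with $m(\rho\chi) = m(\rho)$ (equivalently $\rho\chi(p) = \rho(p)$ for all but finitely many $p$) contribute to the leading term; all remaining characters satisfy $\re m(\rho\chi) < m(\rho)$ and contribute to a power-saving error. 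Assembling the character sums gives an asymptotic of the form
$$S_\rho(x;q,A) = F(q,A)(\log x)^{m(\rho)-1} + O\!\left((\log x)^{m(\rho)-1-\delta}\right),$$
where the coefficient $F(q,A)$ depends on $q$ and $A$ but not on $x$.

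Stability in $x$ then follows quickly: since the main term depends on $x$ only through $(\log x)^{m(\rho)-1}$, and for $x' \in (x(\log x)^{-C}, x)$ one has $(\log x')^{m(\rho)-1} - (\log x)^{m(\rho)-1} \ll (\log x)^{m(\rho)-2}\log\log x$, the difference $S_\rho(x';q,A) - S_\rho(x;q,A)$ is dominated by $(\log x)^{m(\rho)-1-\delta'}$ for some $\delta' > 0$. By Lemma \ref{lem:primes}(4) we have $E_\rho(x;q) \asymp \frac{q}{\phi(q)}(\log x)^{m(\rho)-1}$, so this difference is indeed $o(E_\rho(x;q))$, giving (iv) with a suitable $\psi_C$. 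The principal obstacle is obtaining all of the above \emph{uniformly} in the modulus $q$ with $q \leq (\log x)^C$. The technical restriction in Definition \ref{def:F}(iv) that every prime $p < \log\log x$ divides $q$ assists with this by restricting the sum to integers coprime to all small primes; by Mertens this contributes the factor $q/\phi(q) \gg \log\log\log x$ into $E_\rho(x;q)$ against which the error is measured, while simultaneously eliminating the potentially delicate contribution of $\rho$ at small prime powers from the analysis.
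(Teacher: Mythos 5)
Your treatment of conditions (i)--(iii) matches the paper's, and the high-level shape of your argument for (iv) -- orthogonality of Dirichlet characters, identify the finitely many $\chi$ with $m(\rho\chi)=m(\rho)$ as the contributors, discard the rest -- is also the paper's. But the way you propose to discard the rest has a genuine gap. You apply Selberg--Delange (Lemma \ref{lem:sum-rho}) to each twist $\rho\chi$ with $\chi$ a character modulo $q$. When $q$ ranges up to $(\log x)^C$, the Dirichlet series $\sum\rho(n)\chi(n)n^{-s}$ factors via Artin $L$-functions over the compositum of the splitting field of $\rho$ with $\QQ(\zeta_q)$; the zero-free region constant $C''_{\rho\chi}$ in Proposition \ref{prop:frob_zeta} and the implicit constants in Lemma \ref{lem:sum-rho} then depend on the conductor of that compositum, hence on $q$, in a manner your argument never controls -- and for real characters of large conductor there is no unconditional exclusion of Siegel zeros. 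On top of this, you are summing over $\phi(q)\ll(\log x)^C$ characters; even granting a $\chi$-uniform asymptotic, the aggregate of the non-exceptional terms is not obviously $o(E_\rho(x;q))$ once $C$ is large relative to $m(\rho)$, so one needs cancellation across the character sum, which a termwise Selberg--Delange estimate does not see. Your closing remark that $q/\phi(q)\gg\log\log\log x$ helps is far too weak to absorb a potential $(\log x)^C$-sized loss.

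The paper sidesteps all of this. Its Lemma \ref{lem:rho-in-APs} invokes a Granville--Soundararajan-type mean value theorem for multiplicative functions in progressions (via [Cor.\ 4.2] of Mat14) which directly and uniformly approximates $S_\rho(X;q,A)$ by a sum over a \emph{fixed} finite set $\mathcal{E}_\rho$ of characters, with the aggregate contribution of all remaining characters already bounded by $o(E_\rho(x;q))$. Selberg--Delange is then applied (Lemma \ref{lem:sum-rho-chi}) only to the finitely many fixed primitive characters in $\mathcal{E}_\rho$, where no uniformity-in-$q$ problem arises, together with a M\"obius inversion in $q$ to handle the coprimality constraint. The hypothesis $p\mid q$ for all $p<\log\log x$ is exploited there, via the auxiliary function $\rho_q$ from Lemma \ref{lem:tilde-rho}, to tame the small prime powers -- a different and more essential role than the one you assign it. To make your route work you would need to prove a uniform-in-$q$ Selberg--Delange estimate and then exhibit the cancellation in $\sum_\chi\bar\chi(A)\,c_{\chi,q}$; neither step is routine, and the paper's choice of a pretentious mean value theorem is precisely what makes these difficulties disappear.
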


Conditions $(i)$ and $(ii)$ of Definition \ref{def:F} are immediate as they are part of the defining 
properties of frobenian multiplicative functions. 
Condition $(iii)$ holds for any
$\alpha_{\rho} = m(\rho) - \delta$ with $0 < \delta < m(\rho)$, as follows from Lemma \ref{lem:primes}(3) and the assumption that $m(\rho) > 0$.
The main difficulty, thus, lies in establishing condition~(iv), and we begin 
by analysing the relevant mean values of 
frobenian multiplicative functions in progressions.
\begin{lemma} \label{lem:rho-in-APs}
Let $\rho: \NN \to \RR_{\geq 0}$ be a non-negative frobenian multiplicative function, let 
$\mathcal{E}_{\rho}$ denote the (finite) set of primitive Dirichlet characters $\chi$ for which
$m(\rho \chi) = m(\rho)$, and let $\mathcal{E}_{\rho}(q)$ denote the set of characters modulo $q$ that are induced 
by the primitive characters $\chi \in \mathcal{E}_{\rho}$. Let $C > 1$ be fixed and 
$x>e$ be a parameter. Then,
\begin{align*}
\Bigg| S_{\rho}(X; q,A) -
\frac{q}{\phi(q)} 
\sum_{\chi \in \mathcal{E}_{\rho}(q)} \chi(A) 
\frac{1}{X} \sum_{n \leq X} \rho(n) \bar\chi(n) \Bigg|
= o_{x \to \infty}(1)E_{\rho}(x;q),
\end{align*}
uniformly for all 
$x^{1/2} \leq X \leq x$, all moduli $q \leq (\log x)^C$ such that $p \mid q$ for all primes $p \leq \log \log x$, 
and all $A \in (\ZZ/q\ZZ)^*$.
\end{lemma}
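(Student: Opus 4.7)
The plan is to combine Dirichlet-character orthogonality with the Selberg--Delange method. Since $\gcd(A,q)=1$, expanding the indicator of the arithmetic progression over $\chi \bmod q$ and relabelling $\chi \mapsto \bar\chi$ yields
$$S_\rho(X;q,A) = \frac{q}{\phi(q)} \sum_{\chi \bmod q} \chi(A)\, \frac{1}{X}\sum_{n \leq X} \rho(n)\bar\chi(n).$$
The restriction of this sum to characters induced by some primitive $\chi_0 \in \mathcal{E}_\rho$ is precisely the main term stated in the lemma, so the problem reduces to bounding, uniformly in $q$, the contribution of the remaining $\chi$.

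For any such $\chi$, the twist $\rho\chi$ is frobenian multiplicative and, by Lemma~\ref{lem:tilde-rho}, may be replaced at finitely many primes dividing $q$ by an $\eps$-weak frobenian companion with the same mean $m(\rho\chi)$. Proposition~\ref{prop:frob_zeta} then factorises the associated Dirichlet series as $F_\chi(s) = \zeta(s)^{m(\rho\chi)} G_\chi(s)$, with $G_\chi$ holomorphic and of polynomial growth in a zero-free region of the form $\re s > 1 - c/\log(|\im s|+3)$. Perron's formula, followed by a contour shift into that region, then yields the Selberg--Delange-type asymptotic
$$T_\chi(X) := \sum_{n \leq X} \rho(n)\chi(n) = c_{\rho\chi}\, X(\log X)^{m(\rho\chi)-1} + O\bigl(X(\log X)^{\re m(\rho\chi)-2}\bigr).$$

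For $\chi \notin \mathcal{E}_\rho(q)$, Lemma~\ref{lem:finitely-many}(2)--(3) forces $\re m(\rho\chi) < m(\rho)$, while Lemma~\ref{lem:finitely-many}(1) confines the characters with $m(\rho\chi) \neq 0$ to a finite set. Splitting the error accordingly, one finds a bounded contribution from finitely many primitive characters with $0 < \re m(\rho\chi_0) \leq m(\rho)-\eta$ for some fixed $\eta>0$, each producing at most $X(\log X)^{m(\rho)-\eta-1}$, and a contribution from at most $\phi(q)$ characters with $m(\rho\chi_0)=0$, each of size at most $X(\log X)^{-1}$ (or better, once the full Perron tail is used). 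Comparing these with the normalising quantity $E_\rho(x;q) \asymp \frac{q}{\phi(q)}(\log x)^{m(\rho)-1}\prod_{p\mid q}(1+|\rho(p)|/p)^{-1}$ then delivers the desired $o(E_\rho(x;q))$ bound in the permitted range $q \leq (\log x)^C$, $x^{1/2}\leq X \leq x$.

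The main obstacle is achieving genuine uniformity in $q$ across the $\phi(q)$ characters of type $m(\rho\chi_0)=0$: a naive term-by-term Selberg--Delange estimate only controls each $T_\chi$ up to $\chi$-dependent implicit constants, whereas here the contributions have to be aggregated into a single error. Overcoming this requires writing $G_\chi$ as a product of twisted Artin $L$-functions $L(s,\psi\otimes\chi_0)$, with $\psi$ ranging over the irreducible characters appearing in the class-function decomposition of the frobenian data of $\rho$, and invoking hybrid convexity bounds on vertical lines together with a Landau-type zero-free region for these $L$-functions that is uniform in the conductor of $\chi_0$. This uniform analytic input is the technical heart of the argument.
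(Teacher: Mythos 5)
Your reduction via orthogonality to the characters outside $\mathcal{E}_\rho(q)$ is correct, but from that point on your plan diverges substantially from the paper's and, more importantly, leaves its hardest step entirely unaddressed. After expanding over \emph{all} characters mod $q$ you must control, uniformly in $q\le(\log x)^C$, the combined contribution of up to $\phi(q)$ characters $\chi$ with $m(\rho\chi)=0$. You correctly observe that a term-by-term Selberg--Delange estimate with $\chi$-dependent implied constants does not suffice, and you propose to salvage this with ``hybrid convexity bounds on vertical lines together with a Landau-type zero-free region for these $L$-functions that is uniform in the conductor of $\chi_0$.'' That assertion is precisely where the argument has a gap: for the relevant twists $L(s,\psi\otimes\chi_0)$, a conductor-uniform zero-free region faces exceptional (Siegel-type) zero issues, and you have not explained how to handle them or how the resulting bounds aggregate across $\phi(q)$ characters to produce the stated $o(E_\rho(x;q))$ error. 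You flag this as ``the technical heart of the argument'' -- which it is -- but supply no proof of it, so the proposal is incomplete at exactly the point where it has to work.

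The paper's route is different in kind and avoids the uniformity problem entirely. Rather than expanding over all $\phi(q)$ characters, it invokes a Granville--Soundararajan/Hal\'asz-type pretentious estimate (via \cite[Cor.~4.2]{Mat14}) which, applied to the bounded companion $g_\rho$ of $\rho$, shows that $S_\rho(X;q,A)$ is already well-approximated by the contribution of a \emph{bounded} number $k$ of characters, with $k$ depending only on $\rho$ and not on $q$ or $x$. It then remains only to show that those finitely many characters lying in $\mathcal{E}_{\rho,k}\setminus\mathcal{E}_\rho$ have negligible partial sums, which follows from Lemma~\ref{lem:sum-rho} together with the strict inequality $\re m(\rho\chi)<m(\rho)-\delta$, after passing to the modified function $\rho_q$ (Lemma~\ref{lem:tilde-rho}) to handle primes dividing $q$ by M\"obius inversion. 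Because the number of characters in play is bounded a priori, no conductor-uniform zero-free region or hybrid convexity bound is needed. If you wish to pursue your $L$-function route instead, you would need to actually prove the uniform zero-free region and growth estimates you allude to, and explain how to aggregate the resulting $\chi$-dependent savings; as written, the proposal asserts rather than establishes the key analytic input.
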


\begin{proof}
Recall that that $\mathcal{E}_\rho$ is a finite set by Lemma \ref{lem:finitely-many}. 
We seek to apply \cite[Cor.\ 4.2]{Mat14} which is an easy corollary to a result of
Granville and Soundararajan \cite{GS-book} but requires some set-up.
For this purpose, let $g_{\rho}$ denote the bounded multiplicative function obtained via  
\eqref{eq:def-g} for $h=\rho$, and let $H \in \NN$ be as in Definition \ref{def:F}(i) for $\rho$.
Suppose we are given any $x > e$, let $y \in [x^{1/(8H)},x]$ and 
enumerate for each fixed $y$ the primitive characters of conductor at most $(\log y)^C$ as 
$\chi_1, \chi_2, \dots$ in such a way that 
$|S_{g_{\rho} \bar\chi_1}(y)| \geq |S_{g_{\rho} \bar\chi_2}(y)| \geq \dots$
forms a non-decreasing sequence. 
Note that $m(g_{\rho}\chi) = \frac{1}{H}m(\rho\chi)$ for any character $\chi$. 
Since $\rho$ is real-valued and non-negative, Lemmas \ref{lem:finitely-many} and~\ref{lem:sum-rho} imply that
for all sufficiently large $x$ and for each choice of $y$ as above, the initial elements of the associated sequences 
$\chi_1, \chi_2, \dots$ are precisely given by those $\chi \in \mathcal{E}_{\rho}$ for which 
$c_{g_{\rho} \bar \chi} \not=0$ in Lemma \ref{lem:sum-rho}.
Let $\alpha_{\rho} > 0$ be as in Definition \ref{def:F}(iii) for $\rho$ and 
let $k \geq \max(2, \alpha_{\rho}^{-2}, \#\mathcal{E}_{\rho})$ be an integer. 
For any $y$ as before, define the set $\mathcal{E}_{\rho, k, y} = \{ \chi_1, \dots, \chi_k\}$ to consist of the 
first $k$ elements of the sequence of characters defined for the given value of $y$.
Moreover, let $\mathcal{E}_{\rho, k} = \bigcup_{1\leq j \leq z} \mathcal{E}_{\rho, k, x^{1/2^{j}}}$, where
$z= \lceil \log_2(4H)\rceil$,
and let $\mathcal{E}_{\rho, k}^*(q)$ denote the set of characters $\chi^* \Mod{q}$ that are induced from
the characters $\chi \in  \mathcal{E}_{\rho, k}$.
Then \cite[Cor.~4.2]{Mat14} implies that 
\begin{align*}
\bigg| S_{\rho}(X; q,A) -
\frac{q}{\phi(q)} 
\sum_{\chi^* \in \mathcal{E}^*} \chi^*(A) 
\frac{1}{X}
\sum_{n \leq X} \rho(n) \bar\chi^*(n) \bigg|
= o_{x \to \infty}(1) E_{\rho}(x;q),
\end{align*}
uniformly for all $x^{1/2} \leq X \leq x$, all $q \leq (\log x)^C$ as in the statement, all $A \in (\ZZ/q\ZZ)^*$,
and for all sets $\mathcal{E}^* \supseteq \mathcal{E}_{\rho, k}^*(q)$ of Dirichlet characters modulo $q$. 
The lemma thus follows provided we can show that 
\begin{equation} \label{eq:aux-bd-3.4}
S_{\rho \bar\chi^*}(X) 
= o_{x \to \infty}
\bigg(
\frac{1}{\log x} \prod_{p \leq x, \, p\nmid q}\left(1+ \frac{|\rho(p)|}{p}\right)
\bigg) 
\end{equation} 
for every $\chi^* \in  \mathcal{E}^*_{\rho, k}(q)$ 
that is induced from some $\chi \in \mathcal{E}_{\rho, k} \setminus \mathcal{E}_{\rho}$,
and where $q \leq (\log x)^C$ is such that $p\mid q$ for all $p\leq \log \log x$.

To prove \eqref{eq:aux-bd-3.4}, let $\rho_q = \tilde \rho$ denote the function defined by \eqref{eq:def-tilde-rho} 
for the given value of $q$ and for $\eps = 1/2$, say.
Then
$S_{\rho \bar\chi^*}(X)
= S_{\rho_q \bar\chi^*}(X)$,
and Lemma \ref{lem:tilde-rho} implies that $m(\rho) = m(\rho_q)$ as well as  
$m(\rho_q \chi) = m(\rho \chi) = H m(g_{\rho} \chi)$ for all characters $\chi$.
To prove the required bound, note that for each 
$\chi \in \mathcal{E}_{\rho, k} \setminus \mathcal{E}_{\rho}$ 
there is some $\delta > 0$ such that $\re m(\rho \chi) < m(\rho) - \delta$. 
By Lemma \ref{lem:finitely-many}, we in fact have $m(\chi\rho) = 0$ for all but finitely many primitive $\chi$.
Combining this information with Lemma \ref{lem:sum-rho},
there thus exists $\delta_0 >0$ such that 
$$
S_{\rho_q \bar\chi}(X) 
\ll (\log x)^{m(\rho)-1-\delta_0},
$$
uniformly for all $x^{1/2}(\log x)^{-C} \leq X \leq x$ and $\chi$ as before.
Since $\rho_q$ is completely multiplicative at primes dividing $q$, M\"obius inversion and the above 
yield
\begin{align*}
S_{\rho_q \bar\chi^*}(X)
&= \sum_{d|q} \frac{ \mu(d) \rho(d) \bar\chi (d)}{d} S_{\rho_q \bar\chi}(X/d) 
\ll \sum_{d|q} \frac{ |\rho(d)|}{d} |S_{\rho_q \bar\chi}(X/d)| \\
&\ll (\log x)^{m(\rho)-1-\delta_0} \prod_{p\mid q} \left(1 + \frac{|\rho(p)|}{p} \right),
\end{align*}
provided $x^{1/2} \leq X \leq x$ and $q \leq (\log x)^C$.
Invoking the final part of Lemma~\ref{lem:primes}, the bound $q \leq (\log x)^C$
and $|\rho(p)| \leq H$, we deduce that
\begin{align*}
S_{\rho_q \bar\chi^*}(X)
&\ll (\log x)^{-1-\delta_0} \prod_{p\mid q} \left(1 + \frac{|\rho(p)|}{p} \right) 
\prod_{p' \leq x} \left(1 + \frac{|\rho(p')|}{p'} \right)\\
&\ll (\log x)^{-1-\delta_0} \prod_{p \mid q} \left(1 + \frac{|\rho(p)|}{p} \right)^2 
\prod_{p' \leq x, \, p'\nmid q} \left(1 + \frac{|\rho(p')|}{p'} \right)\\
&\ll (\log q)^{2H} (\log x)^{-1-\delta_0} \prod_{p' \leq x, \, p'\nmid q} \left(1 + \frac{|\rho(p')|}{p'} \right)\\
&\ll (\log x)^{-1-\delta_0/2}\prod_{p \leq x, \, p\nmid q} \left(1 + \frac{|\rho(p)|}{p} \right).
\end{align*}
Hence \eqref{eq:aux-bd-3.4} holds as required.
\end{proof}

We will apply Lemma \ref{lem:rho-in-APs} together with the following refinement of Lemma~\ref{lem:sum-rho}.

\begin{lemma}\label{lem:sum-rho-chi}
Let $\rho$ be a frobenian multiplicative function, 
let $H$ be as in Definition \ref{def:e-weak_frobenian}(3), 
let $\mathcal{S}_0 = \{p \text{ prime}: p \leq H^{8}\}$, and let $x>e$ be a parameter.

If $\rho^*$ denotes the completely multiplicative function whose values at primes
are given by $\rho^*(p)=\rho(p)\1_{p \not\in \mathcal{S}_0}$, and if 
$c_{\rho^*}$ denotes the corresponding leading constant from Lemma \ref{lem:sum-rho},
then, as $x \to \infty$, 
\begin{align*}
\frac{1}{x} \sum_{\substack{n \leq x \\ \gcd(n,q)=1}} \rho(n)  
&=  c_{\rho^*} (\log x)^{m(\rho)-1} 
   \prod_{\substack{p\mid q,\, p \not\in \mathcal{S}_0}} \left(1 - \frac{\rho(p)}{p} \right)
   + o_{x \to \infty}\left( \frac{\phi(q)E_{\rho}(x,q)}{q} \right)
\end{align*}
uniformly for all integers $q \leq \exp((\log \log x)^2)$ such that $p\mid q$ for all 
primes $p < \log \log x$.
Moreover, if $\rho$ is real non-negative with $m(\rho)>0$, then $c_{\rho^*}>0$.
\end{lemma}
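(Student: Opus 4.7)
The plan is to pass from $\rho$ to its completely multiplicative companion $\rho^*$, then peel off the coprimality condition by M\"obius inversion and invoke Lemma \ref{lem:sum-rho}. The first observation is that for $x$ sufficiently large one has $\log\log x > H^{8}$, so the hypothesis that $p \mid q$ for every prime $p < \log\log x$ forces every prime in $\mathcal{S}_0$ to divide $q$. Consequently any $n$ with $\gcd(n,q)=1$ is automatically coprime to $\mathcal{S}_0$, and on such $n$ one has $\rho(n)=\rho^*(n)$ by construction; the sum therefore reduces to $\sum_{n \leq x,\,\gcd(n,q)=1}\rho^*(n)$.

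Now I would exploit the complete multiplicativity of $\rho^*$ to perform a M\"obius inversion,
$$\sum_{n \leq x,\,\gcd(n,q)=1}\rho^*(n) = \sum_{d\mid q}\mu(d)\rho^*(d)\sum_{m\leq x/d}\rho^*(m),$$
where the outer sum is effectively over squarefree $d\mid q$ coprime to $\mathcal{S}_0$ since $\rho^*(d)=0$ otherwise. The function $\rho^*$ is itself frobenian multiplicative with $m(\rho^*)=m(\rho)$ (the two agree outside the finite set $\mathcal{S}_0$), so Lemma \ref{lem:sum-rho} yields
$$\sum_{m \leq x/d}\rho^*(m) = c_{\rho^*}\frac{x}{d}(\log(x/d))^{m(\rho)-1} + O\!\left(\frac{x}{d}(\log(x/d))^{\re m(\rho)-2}\right).$$
The range $d \leq q \leq \exp((\log\log x)^{2})$ permits the substitution $\log(x/d) = (\log x)(1+O((\log\log x)^{2}/\log x))$ at negligible cost, and the finite Euler identity
$$\sum_{\substack{d\mid q,\,\mu(d)^2=1\\ \gcd(d,\prod_{p\in\mathcal{S}_0}p)=1}}\frac{\mu(d)\rho^*(d)}{d} = \prod_{p\mid q,\,p\notin\mathcal{S}_0}\!\left(1-\frac{\rho(p)}{p}\right)$$
produces the asserted main term.

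The main obstacle is checking that the accumulated Selberg--Delange error, namely
$$O\!\left(x(\log x)^{\re m(\rho)-2}\prod_{p\mid q,\,p\notin\mathcal{S}_0}(1+|\rho(p)|/p)\right),$$
is of smaller order than the target $x\phi(q)E_\rho(x,q)/q = (x/\log x)\prod_{p\leq x,\,p\nmid q}(1+|\rho(p)|/p)$. To see this I would invoke Lemma \ref{lem:primes}(4), which gives $\prod_{p\leq x}(1+|\rho(p)|/p)\asymp(\log x)^{m(|\rho|)}$, use the inequality $\re m(\rho)\leq m(|\rho|)$ that follows from the integral representation of the mean used in the proof of Lemma \ref{lem:finitely-many}, and apply the Mertens-type estimate $\prod_{p\mid q}(1+|\rho(p)|/p)\ll(\log\log x)^{O(1)}$ (valid because $\log q\leq (\log\log x)^{2}$ and $|\rho(p)|$ is uniformly bounded at primes). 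These combine to give a ratio of error to target of order $(\log x)^{\re m(\rho)-1-m(|\rho|)}(\log\log x)^{O(1)} = O((\log x)^{-1}(\log\log x)^{O(1)}) = o(1)$, as required. The positivity claim $c_{\rho^*}>0$ when $\rho$ is real, non-negative and $m(\rho)>0$ is then immediate from the corresponding assertion in Lemma \ref{lem:sum-rho} applied to $\rho^*$, which inherits those properties and has the same mean.
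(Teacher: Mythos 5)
The crucial step in your proposal is wrong. You claim that, since every prime in $\mathcal{S}_0$ divides $q$, any $n$ coprime to $q$ satisfies $\rho(n)=\rho^*(n)$ ``by construction.'' This is false: $\rho^*$ is \emph{completely} multiplicative, whereas $\rho$ is merely multiplicative, so for $n$ coprime to $q$ one has $\rho^*(n)=\prod_p\rho(p)^{v_p(n)}$ but $\rho(n)=\prod_p\rho(p^{v_p(n)})$, and nothing in Definition~\ref{def:e-weak_frobenian} forces $\rho(p^k)=\rho(p)^k$ for $k\geq 2$. The two functions coincide only on \emph{squarefree} $n$ coprime to $q$. This is not a cosmetic gap: bounding the discrepancy is exactly where the hypothesis ``$p\mid q$ for all $p<\log\log x$'' earns its keep. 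The paper's proof establishes the nontrivial estimate
\[
\sum_{\substack{n\leq x\\ \gcd(n,q)=1}}\bigl|\rho(n)-\rho^*(n)\bigr|
=o_{x\to\infty}\Bigl(\tfrac{1}{\log x}\prod_{p\leq x,\,p\nmid q}\bigl(1+\tfrac{|\rho(p)|}{p}\bigr)\Bigr)
\]
by observing that any $n$ coprime to $q$ with $\rho(n)\neq\rho^*(n)$ must have a square divisor $m^2>1$ with $m$ supported on primes $\geq\log\log x$ (since all small primes divide $q$), then splitting the sum over the squarefree part and the square part and using $|\rho(n)|,|\rho^*(n)|\ll n^{1/8}$ together with Shiu's theorem. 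Your proof simply asserts this reduction is free, so it has a genuine hole.

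The remainder of your argument — M\"obius inversion over $d\mid q$, applying Lemma~\ref{lem:sum-rho} to $\rho^*$ on each piece, substituting $\log(x/d)\approx\log x$, collapsing the divisor sum into the Euler factor, and comparing the error against $\phi(q)E_\rho(x,q)/q$ via Lemma~\ref{lem:primes}(4) and the bound $\prod_{p\mid q}(1+|\rho(p)|/p)\ll(\log\log x)^{O(1)}$ — is essentially the paper's computation and is sound. But you need to insert the missing estimate before this machinery can legitimately be run on $\rho^*$ in place of $\rho$.
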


\begin{proof}
Let $q \leq \exp((\log \log x)^2)$ be such that $p\mid q$ for all primes $p < \log \log x$, 
and note that $m(\rho) = m(\rho^*)$. 
To start with, we claim that
\begin{equation} \label{eq:rho*_claim}
   \sum_{\substack{n \leq x \\ \gcd(n,q)=1}} | \rho(n) - \rho^*(n)|
   = o_{x \to \infty} \Bigg(\frac{1}{\log x}
     \prod_{p \leq x, p \nmid q} \left(1 + \frac{|\rho(p)|}{p}\right)\Bigg). 
\end{equation}
Assuming \eqref{eq:rho*_claim} for the moment, it suffices to prove the lemma with $\rho$ replaced 
by $\rho^*$. 
In this case, it follows from Lemma \ref{lem:sum-rho} and M\"{o}bius inversion that
\begin{align*}
   \sum_{\substack{n \leq x \\ \gcd(n,q)=1}} \rho^*(n)
&= \sum_{d\mid q} \mu(d) \rho^*(d) 
   \sum_{n \leq x/d} \rho^*(n) \\  
&= \sum_{d\mid q} \frac{\mu(d) \rho^*(d)}{d} 
   \Big(c_{\rho^*} + O\Big(\frac{1}{\log (x/d)}\Big)\Big) 
   x(\log (x/d))^{m(\rho)-1} \\
&= \left(c_{\rho^*} + O_{\delta}\left((\log x)^{-1 + \delta}\right)\right) 
   x (\log x)^{m(\rho)-1} \prod_{p\mid q, p\not\in \mathcal{S}_0} \left(1 - \frac{\rho(p)}{p} \right),   
\end{align*}
where we used the bounds $\log (x/d) = (1+O((\log \log x)^2/\log x)) \log x$ for $d \mid q$,
and $\log \log x \ll_{\delta} (\log x)^{\delta}$ for $\delta >0$, as well as
$$
\sum_{d\mid q} \frac{|\mu(d) \rho^*(d)|}{d}
= \prod_{p\mid q, p\not\in \mathcal{S}_0} \left(1 + \frac{|\rho(p)|}{p} \right)
\leq \prod_{p\mid q} \left(1 + \frac{H}{p} \right)
\ll (\log \log x)^{2H}.
$$

Thus, it remains to prove \eqref{eq:rho*_claim}.
Note that $|\rho(n)|,|\rho^*(n)| \ll n^{1/8}$ for all $n \in \NN$,
and that $\rho(n)=\rho^*(n)$ for all square-free integers $n$ that are co-prime to $q$.
Let us decompose each integer $n = m_1 m_2$ into a product of a square-free integer
$m_1$ and a square $m_2=m^2$.
Then, assuming that $\gcd(n,q)=1$, 
the condition $\rho(n) \neq \rho^*(n)$ implies that $m$ has a prime factor $p\geq \log \log x$.
We thus have:
\begin{align} \label{eq:another-bound}
&\frac{1}{x} \Big|
\sum_{\substack{n \leq x \\ \gcd(n,q)=1}} \left(\rho(n) - \rho^*(n)\right) \Big|  \ll
\\
\nonumber
& \frac{1}{x}
\sum_{\substack{m_1 \leq x^{1/2}: \\ |\mu(m_1)|=1}} |\rho^*(m_1)| 
\sum_{\substack{1 < m^2 \leq x/m_1 :\\ \gcd(m,q)=1 }} m^{1/4} 
+
\sum_{\substack{1<m^2 \leq x^{1/2}: \\ \gcd(m,q)=1 }} 
\frac{m^{1/4}}{m^2}
\frac{m^2}{x}
\sum_{\substack{m_1 \leq x/m^2 \\ \gcd(m_1,q)=1}}  
|\rho^*(m_1)|.
\end{align}
Using the bound $|\rho^*(m_1)|\leq H^{\Omega(m_1)}$,
the first of the two terms is bounded by:
\begin{align*} 
 &\ll \sum_{\substack{m_1 \leq x^{1/2}\\ |\mu(m_1)|=1}} \frac{H^{\omega(m_1)}}{m_1} 
 \frac{m_1}{x}\sum_{\substack{m \leq (x/m_1)^{1/2} }} m^{1/4}
 \ll  \sum_{\substack{m_1 \leq x^{1/2}\\ |\mu(m_1)|=1 }} \frac{H^{\omega(m_1)}}{m_1} (x/m_1)^{-1/2 + 1/4}\\ 
 &\ll  x^{-1/8} \prod_{p\leq x^{1/2}}(1 + H/p) 
 \ll_{\eps}  x^{-1/8 +\eps},
\end{align*}
which agrees with our claim.
Concerning the second term in the bound \eqref{eq:another-bound}, it follows from
Shiu \cite[Theorem 1]{Shiu80} (see \cite[Lemma 3.1]{Mat14}) that the inner sum satisfies
$$
\frac{m^2}{x}
\sum_{\substack{m_1 \leq x/m^2 \\ \gcd(m_1,q)=1}}  
|\rho^*(m_1)| 
\ll \frac{1}{\log x} 
\prod_{p \leq x, p \nmid q} \left(1 + \frac{|\rho(p)|}{p}\right).
$$
For the outer sum, we have
\begin{align*}
& \sum_{\substack{1<m^2 \leq x^{1/2} \\\exists p \geq (\log \log x). p \mid m }} 
\frac{m^{1/4}}{m^2}
\leq \sum_{\substack{(\log \log x)^2 < m^2 \leq x^{1/2} }} 
m^{-2+1/4}
\ll (\log \log x)^{-1+1/4},
\end{align*}
which shows that the second term, too, is $o(\phi(q) E_{\rho}(x;q)/q)$, as required.
\end{proof}

We are now in the position to verify condition (iv) of Definition~\ref{def:F} for real
non-negative frobenian multiplicative functions.

\begin{lemma} \label{lem:condition-iv}
 Let $\rho: \NN \to \RR_{\geq 0}$ be a real non-negative frobenian multiplicative function.
Then, with all assumptions from Definition~\ref{def:F}(iv) in place, we have 
$$S_{\rho}(x; q, A) = S_{\rho}(x'; q, A) + o_{x \to \infty}(1) E_{\rho}(x;q).$$ 
\end{lemma}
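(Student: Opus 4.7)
The plan is to apply Lemma~\ref{lem:rho-in-APs} at both $x$ and $x'$ and reduce the claim to controlling the variation over $X \in [x', x]$ of finitely many character-twisted sums of the form $\sum_{n \leq X} \rho(n) \bar\chi(n)$.

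First, since $x' \in (x(\log x)^{-C}, x)$ we have $x' \geq x^{1/2}$ for all sufficiently large $x$, so Lemma~\ref{lem:rho-in-APs} applies at both $X = x$ and $X = x'$. Subtracting the resulting two expressions, the $o(1) E_\rho(x;q)$ error terms combine into an error of the same size, and the problem reduces to bounding the main-term difference
\begin{equation*}
D := \frac{q}{\phi(q)} \sum_{\chi \in \mathcal{E}_\rho(q)} \chi(A) \left( \frac{1}{x} \sum_{n \leq x} \rho(n) \bar\chi(n) - \frac{1}{x'} \sum_{n \leq x'} \rho(n) \bar\chi(n) \right).
\end{equation*}

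Next, by Lemma~\ref{lem:finitely-many}(1) the set $\mathcal{E}_\rho$ is finite, so $\mathcal{E}_\rho(q)$ has bounded cardinality. For each $\chi \in \mathcal{E}_\rho(q)$, induced from a primitive character $\chi' \in \mathcal{E}_\rho$, the multiplicative function $\rho \bar\chi$ agrees with $\rho \bar\chi'$ outside the finite exceptional set of primes dividing $q$; in particular it is still frobenian with mean $m(\rho\bar\chi) = m(\rho\bar\chi') = m(\rho)$. Applying Lemma~\ref{lem:sum-rho} to $\rho\bar\chi$ at both $X = x$ and $X = x'$ gives
\begin{equation*}
\frac{1}{X} \sum_{n \leq X} \rho(n) \bar\chi(n) = c_{\rho \bar\chi} (\log X)^{m(\rho)-1} + O\bigl((\log X)^{m(\rho)-2}\bigr).
\end{equation*}
Since $0 \leq \log x - \log x' \leq C \log \log x$, the mean value theorem yields $(\log x)^{m(\rho)-1} - (\log x')^{m(\rho)-1} = O((\log x)^{m(\rho)-2} \log \log x)$, and hence each summand in $D$ has size $O((\log x)^{m(\rho)-2} \log \log x)$.

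Finally, one compares with $E_\rho(x;q)$. Lemma~\ref{lem:primes}(4) gives $\prod_{p \leq x}(1 + \rho(p)/p) \asymp (\log x)^{m(\rho)}$, and since every prime $p \mid q$ satisfies $p \leq q \leq (\log x)^C$ and $\rho(p) \leq H$, we have $\prod_{p \mid q}(1 + \rho(p)/p) \ll (\log \log x)^{O(1)}$. Therefore
\begin{equation*}
E_\rho(x;q) \gg \frac{q}{\phi(q)} \cdot \frac{(\log x)^{m(\rho)-1}}{(\log \log x)^{O(1)}},
\end{equation*}
which is easily seen to dominate $D$, giving $D = o(E_\rho(x;q))$. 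The essential analytic content is contained in Lemma~\ref{lem:rho-in-APs}; the present lemma is largely an assembly argument, and the only mild technical point is verifying that the character-twisted main terms vary by a negligible amount as $X$ slides from $x'$ up to $x$.
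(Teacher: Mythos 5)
Your reduction via Lemma~\ref{lem:rho-in-APs} at both endpoints $x$ and $x'$ is exactly the paper's first step, but the second half of your argument has a genuine gap: you invoke Lemma~\ref{lem:sum-rho} for the functions $\rho\bar\chi$ with $\chi \in \mathcal{E}_\rho(q)$, but these are characters modulo $q$, and $q$ grows with $x$. Although each such $\chi$ is induced by one of finitely many fixed primitive characters $\chi' \in \mathcal{E}_\rho$, the multiplicative function $\rho\bar\chi = \rho\bar\chi' \cdot \1_{\gcd(\cdot,q)=1}$ itself changes with $q$. Both the leading constant $c_{\rho\bar\chi}$ and the implied constant in the $O\bigl((\log X)^{m(\rho)-2}\bigr)$ error term of Lemma~\ref{lem:sum-rho} therefore depend on $q$, and Lemma~\ref{lem:sum-rho} is stated for a \emph{fixed} frobenian multiplicative function with no claim of uniformity. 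Indeed, $c_{\rho\bar\chi}$ carries a factor roughly of size $\prod_{p\mid q}\bigl(1-\rho(p)\bar\chi'(p)/p\bigr)$, which degenerates as $q\to\infty$, so "$D = O((\log x)^{m(\rho)-2}\log\log x)$" with an absolute implied constant is not what the quoted lemma delivers.

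The paper resolves exactly this uniformity issue by applying Lemma~\ref{lem:sum-rho-chi} with $\rho$ replaced by $\rho\chi'$, where $\chi'$ is the \emph{fixed primitive} character inducing $\chi^*$. Lemma~\ref{lem:sum-rho-chi} is the refinement of Lemma~\ref{lem:sum-rho} designed precisely for this step: it evaluates $\frac{1}{X}\sum_{n\leq X,\,\gcd(n,q)=1}\rho(n)\bar\chi'(n)$ with an error term of the explicit shape $o(\phi(q)E_\rho(x,q)/q)$ that is uniform over the admissible range of $q$, and it isolates the $q$-dependence of the main term as an explicit product over $p\mid q$. Once that uniform asymptotic is in hand, the cancellation argument you describe (using $\log x' = \log x + O(C\log\log x)$) goes through. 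So your skeleton is correct, but the replacement of Lemma~\ref{lem:sum-rho-chi} by the weaker Lemma~\ref{lem:sum-rho} leaves the crucial uniformity-in-$q$ unjustified; you should instead pass to the primitive character and invoke Lemma~\ref{lem:sum-rho-chi}.
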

\begin{proof}
 Lemma \ref{lem:rho-in-APs} yields an approximation of $S_{\rho}(X; q, A)$ 
 by a finite character sum that holds uniformly for all $X \in [x^{1/2},x]$.
 Using this approximation, the lemma follows provided
$$
S_{\rho \chi^*}(x) = S_{\rho \chi^*}(x') 
+ o_{x \to \infty}\bigg(\frac{1}{\log x} \prod_{p \leq x}\bigg(1+ \frac{|\rho(p)\chi^*(p)|}{p}\bigg)\bigg)
$$
for all $\chi^* \in \mathcal{E}_{\rho}(q)$. 
If $\chi$ denotes the primitive character that induces $\chi^*$, then the latter assertion follows from 
Lemma \ref{lem:sum-rho-chi} applied with $\rho$ replaced by $\rho \chi$.
Indeed, since $\log x' = \log x + O(C\log \log x)$, applying the lemma to both terms in the difference
$S_{\rho \chi^*}(x) - S_{\rho \chi^*}(x')$, we obtain sufficient cancellation in main terms.
\end{proof}

\begin{proof}[Proof of Proposition \ref{lem:verification}]
 Conditions $(i)$ and $(ii)$ are clear, 
 condition $(iii)$ follows with $\alpha_{\rho} = \frac{m(\rho)}{2}$ from Lemma \ref{lem:primes}(3), 
 while condition $(iv)$ holds by Lemma~\ref{lem:condition-iv}.
\end{proof}

\subsection{Correlations of frobenian multiplicative functions}

In this section, we deduce an asymptotic result for correlations of frobenian multiplicative functions 
from \cite[Theorem 2.1]{Mat16}. 
In view of Proposition \ref{lem:verification}, we could apply \cite[Theorem 2.1]{Mat16} directly.
However, in the case of frobenian multiplicative functions a stronger result can in fact be obtained.

\begin{definition}
For any real number $x>e$, define $$W(x) := \prod_{p < \log \log x} p.$$
\end{definition}

\begin{definition} \label{def:W-tilde}
Given any fixed collection $\rho_1, \dots, \rho_r$ of frobenian multiplicative functions, 
we define the following function $\widetilde{W}(x)=\widetilde{W}(x;\rho_1, \dots, \rho_r)$.
For each $1\leq j \leq r$, let $\mathcal{E}_{\rho_j}$ denote the set of primitive characters 
defined in Lemma \ref{lem:rho-in-APs}.
If $q_{\chi}$ denotes the conductor of the character $\chi$, define
$$\widetilde W(x) 
 = W(x) \prod_{j=1}^r \prod_{\chi \in \mathcal{E}_{\rho_j}} q_{\chi}~, \quad (x>e).
$$
\end{definition}
Regarding $r$ and $\rho_1, \dots, \rho_r$ as fixed, we have
$\widetilde W(x) \ll (\log x)^{1+o(1)}$.
 
\begin{definition}[Finite complexity system of linear polynomials]
 Let $\varphi_1, \dots, \varphi_r$ $\in \ZZ[u_1, \dots, u_s]$ be linear polynomials.
 Then ${\boldsymbol \varphi} = (\varphi_1, \dots, \varphi_r)$ is called a 
 finite complexity system of linear polynomials if for any 
 pair of indices $i \not= j$, the linear forms 
 $\psi_i(\boldsymbol u) := \varphi_i(\boldsymbol u) - \varphi_i(\boldsymbol 0)$ and
 $\psi_j(\boldsymbol u) := \varphi_j(\boldsymbol u) - \varphi_j(\boldsymbol 0)$ are linearly independent over $\QQ$.
\end{definition}

Restricted to the class of frobenian multiplicative functions, \cite[Theorem 2.1]{Mat16} yields the following:
\begin{theorem} \label{thm:mat16}
Let $N>2$ be an integer parameter,
let $\rho_1, \dots, \rho_r: \NN \to \RR_{\geq 0}$ be non-negative frobenian multiplicative functions, each satisfying 
$m(\rho_j) > 0$, and let $\widetilde{W}=\widetilde{W}(N)$ be as in Definition~\ref{def:W-tilde}.
Further, let $\varphi_1, \dots, \varphi_r \in \ZZ[u_1, \dots, u_s]$ be a finite complexity system of linear polynomials,
let $\mathfrak{K} \subset [-1,1]^s$ be a fixed convex set and let
$$\mathfrak{K}^+ = \mathfrak{K} \cap \bigcap_{j=1}^r \psi_j^{-1}(\RR^+)$$
be the (convex) subset of $\mathfrak{K}$ that is mapped to $\RR^+$ by each of the linear forms
$\psi_j(\boldsymbol u) := \varphi_j(\boldsymbol u) - \varphi_j(\boldsymbol 0)$.
Finally, suppose that $\vol(\mathfrak{K}^+)>0$, extend each $\rho_j$ to all of $\ZZ$ by setting $\rho_j(-m) = 0$ if $m \geq 0$, and fix a point $\boldsymbol{a} \in \RR^s$.

Then there exists a positive constant $B_2$ such that the following asymptotic holds as $N \to \infty$:
 \begin{align}\label{eq:main'}
\nonumber
&\frac{1}{\vol (N\fK^+)}
\sum_{\substack{\boldsymbol n \in \ZZ^s \cap (N\fK + \boldsymbol{a})}} \prod_{j=1}^r 
\rho_j(\varphi_j(\boldsymbol n))
~= \\
\nonumber
&\sum_{\substack{w_1, \dots, w_r 
 \\ p\mid w_i \Rightarrow p\mid \widetilde W
 \\ w_i \leq (\log N)^{B_2}}}
\sum_{\substack{A_1,\dots,A_r \\ \in (\ZZ/\widetilde W \ZZ)^*}}
\bigg(\prod_{j=1}^r 
\rho_j(w_j)S_{\rho_j}\Big(N;\widetilde W,A_j\Big)\bigg)
\beta_{\boldsymbol{\varphi}}(w_1A_1, \dots, w_rA_r)\\
&\qquad \qquad  +
o\bigg( \frac{1}{(\log N)^r} 
\prod_{j=1}^r  \prod_{p \leq N} 
\bigg(1 + \frac{\rho_i(p)}{p} \bigg)
\bigg)
~,
\end{align}
where
\begin{equation*}
\beta_{\boldsymbol{\varphi}}(w_1A_1, \dots, w_rA_r)
=\frac{1}{(w\widetilde W)^s}
\sum_{\substack{{\boldsymbol v} \in \\ (\ZZ/w \widetilde W \ZZ)^s}}
\prod_{j=1}^r 
\1_{\varphi_j({\boldsymbol v}) \equiv w_j A_j ~(w_j \widetilde W)}
\end{equation*}
with $w = \lcm(w_1, \dots, w_r)$.
\end{theorem}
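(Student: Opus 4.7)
The plan is to deduce Theorem \ref{thm:mat16} from \cite[Theorem 1.3]{Mat16} by a modulus-enlargement argument. First I would invoke Proposition \ref{lem:verification}, which guarantees that each non-negative frobenian multiplicative function $\rho_j$ with $m(\rho_j)>0$ lies in the class $\mathcal{F}$. Consequently, \cite[Theorem 1.3]{Mat16} applies to the tuple $(\rho_1,\ldots,\rho_r)$, the finite-complexity system $(\varphi_1,\ldots,\varphi_r)$, and the convex set $\mathfrak{K}^+$, producing an asymptotic formula of exactly the $W$-trick shape displayed in \eqref{eq:main'}, but formulated in terms of the standard modulus $W(N)$ rather than $\widetilde{W}(N)$.

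The substance of Theorem \ref{thm:mat16} beyond a direct citation is the replacement of $W(N)$ by the larger modulus $\widetilde{W}(N)$, which incorporates the conductors of the finitely many exceptional primitive characters in $\mathcal{E}_{\rho_1}\cup\cdots\cup\mathcal{E}_{\rho_r}$. Since these characters are fixed once $\rho_1,\ldots,\rho_r$ are fixed, Lemma \ref{lem:finitely-many}(1) gives that $\widetilde{W}(N)/W(N)$ is a bounded integer and $\widetilde{W}(N) \ll (\log N)^{1+o(1)}$, so the enlarged modulus stays within the admissible range needed by the proof of \cite[Theorem 1.3]{Mat16}. I would verify the three inputs that proof requires of the chosen modulus: a Shiu-type upper bound for $\rho_j$ on progressions (automatic from Definition \ref{def:F}(i)--(ii) and \cite[Theorem 1]{Shiu80}), the stable-mean-value property on short intervals and to any modulus $q\leq(\log N)^C$ with the right small-prime content (Lemma \ref{lem:condition-iv}), and the availability of a positive leading constant in $S_{\rho_j}(N;\widetilde{W},A)$ for $A\in(\ZZ/\widetilde{W}\ZZ)^*$ (Lemma \ref{lem:rho-in-APs} combined with Lemma \ref{lem:sum-rho-chi}).

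With these inputs in hand, I would rerun the Green--Tao-style nilpotent Hardy--Littlewood decomposition underlying the proof of \cite[Theorem 1.3]{Mat16} with $\widetilde{W}$ in place of $W$ throughout. The essential point is that the major-arc analysis in \cite{Mat16} only uses of $W$ that it is supported on small primes, grows as $(\log N)^{1+o(1)}$, and admits Dirichlet-character mean values with good error control on progressions modulo $W$; all three properties are preserved by $\widetilde{W}$. The truncation parameter $B_2$ and the admissible error term in \eqref{eq:main'} are inherited from the corresponding quantities in \cite[Theorem 1.3]{Mat16}, adjusted by a bounded factor depending only on $\widetilde{W}/W$.

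The main obstacle is therefore bookkeeping rather than new analysis: one must track the way the small modulus enters the major/minor arc decomposition in \cite{Mat16} and confirm that the enlargement from $W$ to $\widetilde{W}$ does not degrade any of the $o(\cdot)$ error terms. Once this is done, the fact that $\widetilde{W}$ captures every character $\chi$ for which $m(\rho_j\chi)\neq 0$ ensures that the character-sum approximation of Lemma \ref{lem:rho-in-APs} leaves no residual exceptional contributions modulo $\widetilde{W}$, so that the $W$-trick expansion collapses to the clean form stated in \eqref{eq:main'}.
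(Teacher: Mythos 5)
Your overall strategy — invoke Proposition~\ref{lem:verification} to place each $\rho_j$ in $\mathcal{F}$, apply \cite[Theorem~1.3]{Mat16}, and then explain why the enlarged modulus $\widetilde{W}$ together with the fixed exceptional character set $\mathcal{E}_{\rho_j}$ yields the clean error term — matches the paper's route in its broad outline. The discussion of why $\widetilde{W}$ is admissible (fixed finite character set, $\widetilde{W}\ll(\log N)^{1+o(1)}$, small-prime support, stable mean values via Lemmas~\ref{lem:rho-in-APs} and \ref{lem:condition-iv}) is essentially the same idea the paper uses: because $\mathcal{E}_{\rho_j}$ is independent of the cut-off, the $W$-trick can be chosen uniformly in the degree and dimension of the nilsequences arising in the inverse-theorem step, which is what permits dropping the $\eps$ from the error term of \cite[Theorem~1.3]{Mat16}. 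You state this more loosely than the paper does, but the underlying reasoning is correct.

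There is, however, a genuine gap: you propose applying \cite[Theorem~1.3]{Mat16} directly ``to the convex set $\mathfrak{K}^+$'' and claim this produces the formula \eqref{eq:main'}, whose left-hand side is a sum over $\ZZ^s\cap(N\fK+\boldsymbol{a})$, not over $\ZZ^s\cap N\fK^+$. Passing between these summation domains is not automatic, and the paper devotes the first half of its proof to exactly this reduction. One must bound the volumes of $(N\fK+\boldsymbol{a})\,\Delta\,(N\fK)$ and $\fK_N^+\,\Delta\, N\fK^+$ (each $O(N^{s-1})$ by convexity), and then control the sum of $\prod_j|\rho_j(\varphi_j(\boldsymbol n))|$ over the exceptional thin slab. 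Because the $\rho_j$ are only $\ll_\eps n^\eps$ and not bounded, this requires a second-moment estimate (the paper uses \cite[Lemma~7.9]{BM17}) combined with Cauchy--Schwarz to show the contribution from the boundary region is $O(N^{-1}(\log N)^{O(1)})$ relative to the main term, which is negligible against the error term in \eqref{eq:main'}. Your proposal does not mention this step at all, and without it the left-hand sides of your asymptotic and of \eqref{eq:main'} are simply different objects.
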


\begin{proof}
 Our first aim is to show that we can replace the set $N\fK + \boldsymbol{a}$ in the summation condition
 on the left hand side by $N\fK^+$.
 To this end, we start by showing that this change only involves changing the summation domain on a set
 of volume $O(N^{s-1})$, if we ignore points in the domain at which the summation argument is zero.
 Recall the notation 
 $\mathcal{A} \Delta \mathcal{B}=(\mathcal{A}\cup\mathcal{B})\setminus(\mathcal{A}\cap\mathcal{B})$.
 Replacing $N\fK + \boldsymbol{a}$ by $N\fK$ changes the summation domain by the set 
 $(N\fK + \boldsymbol{a})\Delta (N\fK)$, which is contained in the $\|\boldsymbol{a}\|$-neighbourhood 
 of the boundary of $N\fK$.
 Since $\boldsymbol{a}$ is fixed and $\fK$ convex, this $\|\boldsymbol{a}\|$-neighbourhood has a volume
 of order $O(N^{s-1})$, see e.g.\ \cite[Cor.\ A.2]{GT-linearprimes}, and thus
 $$\vol\Big((N\fK + \boldsymbol{a})\Delta (N\fK)\Big) = O(N^{s-1}).$$
 
 Since all $\rho_j$ vanish on $\ZZ_{\leq 0}$, the domain $N\fK$ can immediately be replaced by
 $$
 \fK_N^+:=(N\fK) \cap \bigcap_{j=1}^n \varphi_j^{-1}(\RR^+) 
 = \{\boldsymbol{n} \in N\fK: 
 \varphi_j (\boldsymbol{n}) = \psi_j (\boldsymbol{n}) + \varphi_j(\boldsymbol{0}) > 0 
 \text{ for all }j 
 \}.
 $$
 In order to compare this set to the set $\fK^+$ from the statement, we note that,
 since each of the $\psi_j$ is homogeneous, we have
 $$
 N\fK^+
 = \{\boldsymbol{n} \in N\fK: \psi_j (\boldsymbol{n}) > 0 \text{ for all }j\}.
 $$
 Writing $b_j=\varphi_j(\boldsymbol{0})$, it thus follows that
 $$
 \vol(\fK_N^+ \Delta N\fK^+)
 \leq \sum_{j=1}^r\vol(\{ \boldsymbol{n} \in N\fK : \psi_j (\boldsymbol{n}) \in [-b_j,b_j]\})
 =O(N^{s-1}).
 $$
 
 The above information will be used to bound one factor in an application of Cauchy-Schwarz,
 while the second factor will be handled with the help of the following bound.
 Let $\rho$ denote the multiplicative function whose values at prime powers are
 given by $\rho(p^j) = \max(|\rho_1(p^j)|, \dots, |\rho_r(p^j)|)$.
 If $H \in \NN$ is such that Definition \ref{def:e-weak_frobenian}(3) holds for all the $\rho_j$, then
 \cite[Lemma 7.9]{BM17} implies that 
 $$
 \sum_{\boldsymbol{n} \in \ZZ^s \cap N\fK^* } \prod_{i=1}^r
 \rho(\varphi_j(\boldsymbol{n}))^2 \ll_C  |\ZZ^s\cap N\fK^*| (\log N)^{O_{r,H}(1)}
 $$
 for any bounded convex subset $\fK^* \subseteq [-C,C]^s$.
 To use this bound, let $\fK_{\boldsymbol{a}}:= 
 \{\boldsymbol{k} + \lambda \boldsymbol{a} \mid \boldsymbol{k} \in \fK, \lambda \in [0,1]\}$
 and note that $N\fK \cup( N\fK + \boldsymbol{a}) \subseteq N \fK_{\boldsymbol{a}}$. 
 The error incurred by replacing $N\fK + a$ by $N\fK^+$ on the left hand side of \eqref{eq:main'}
 can be bounded by
 \begin{align*}
& \frac{1}{\vol N\fK^+}
 \sum_{\substack{\boldsymbol n \in \ZZ^s \cap ((N\fK + a)\Delta N\fK)\cup(N\fK^+ \Delta \fK^+_N)) }} 
 \prod_{j=1}^r 
 |\rho_j(\varphi_j(\boldsymbol n))| \\
& \ll
 \frac{1}{N^s}
 \sum_{\substack{\boldsymbol n \in \ZZ^s \cap N \fK_{\boldsymbol{a}}}} \prod_{i=1}^r 
 \rho(\varphi_i(\boldsymbol n)) \1_{\boldsymbol n \in ((N\fK + a)\Delta N\fK)\cup(N\fK^+ \Delta \fK^+_N)}.
 \end{align*}
 Applying Cauchy-Schwarz to the latter expression and invoking the above second moment bound 
 as well as the bounds on the volumes of the sets in the indicator function, our error term is seen 
 to be $O(N^{-1}(\log N)^{O_{r,H}(1)})$, which is negligible in view of the error term in
 \eqref{eq:main'}.
 
 Replacing thus $N\fK + \boldsymbol{a}$ by $N\fK^+$ on the left hand side of \eqref{eq:main'} and in view of Proposition \ref{lem:verification}, we are left with 
 an expression to which \cite[Theorem 2.1]{Mat16} can be applied.
 In view of the error term in \eqref{eq:main'}, the conclusion of  Theorem \ref{thm:mat16} is, 
 however, stronger than what is implied by a direct application of \cite[Theorem 2.1]{Mat16}.
 The reason behind this is that in the special case of frobenian multiplicative functions, 
 we can prove a stronger `$W$-trick'.
 More precisely, Lemma \ref{lem:rho-in-APs} shows that the set of primitive characters that determine the behaviour
 of the mean value of a frobenian multiplicative function $\rho$ in progressions is a fixed set that does not depend 
 on the cut-off parameter $x$ as soon as $x$ is sufficiently large.
 In the general setting of \cite{Mat16} one has to, instead of with this fixed set, work with the set $\mathcal{E}_{\rho,k}$ that appeared
 in the proof of Lemma \ref{lem:rho-in-APs} and might depend on $x$ and $C$.
 Running through the proof of \cite[Proposition 5.1]{Mat14} with $\mathcal{E}$ replaced by our fixed set 
 $\mathcal{E}_{\rho}$, we see that if $q_{\chi}$ denotes the conductor of a character $\chi$, then
 $\widetilde W(x) 
 = W(x) \prod_{j=1}^r \prod_{\chi \in \mathcal{E}_{\rho_j}} q_{\chi}$
 satisfies the conclusion of \cite[Proposition 5.1]{Mat14}.
 In particular, the value of $\kappa$ in \cite[Proposition 5.1]{Mat14} can be chosen independent of $E$ in this case,
 and it follows, moreover, that the $W$-trick in \cite[Theorem 6.1]{Mat14} is independent of the degree and dimension of the 
 nilsequence involved.
 This in turn allows us to take limits in the application of the inverse theorem in the proof of
 \cite[Theorem 2.1]{Mat16} (more precisely in the proof of the auxiliary result stated in \cite[Proposition 4.2]{Mat16}) without changing the $W$-trick. 
 Hence, $\eps$ can be omitted from the conclusion of \cite[Theorem 2.1]{Mat16} when applied to 
 frobenian multiplicative functions.
\end{proof}

\subsection{Proof of Theorem \ref{thm:frob}}

Theorem \ref{thm:mat16} applies in the situation of Theorem \ref{thm:frob} 
with $s = n+1$ and $N = B$. 
As an intermediate step, we will prove:

\begin{proposition} \label{prop:right-order-asymptotics}
 Let $\mathfrak{K} \subset [-1,1]^n$, $\rho_1, \dots, \rho_r: \NN \to \RR_{\geq 0}$ and 
 $L_1, \dots, L_r \in \ZZ[x_0, \dots,x_n]$ be as in Theorem \ref{thm:frob}. 
 If $s = n+1$, $N = B$ and $\varphi_i(\xx)=L_i(\xx)$ for $i=1, \dots, r$,
 and provided $\vol \fK^+>0$ in the notation of Theorem \ref{thm:mat16},
 then the main term in \eqref{eq:main'} equals 
 $$
 (C_{{\boldsymbol\rho},{\boldsymbol L}} + o(1)) \prod_{j=1}^r (\log B)^{m(\rho_i) - 1} ,
 $$
 for some absolute constant $C_{{\boldsymbol\rho},{\boldsymbol L}}$ that depends at most on
 $\rho_1 \dots, \rho_r$ and $L_1, \dots, L_r$.
 Further, $C_{\boldsymbol{\rho}, \mathbf{L}} > 0$ 
 if and only if there exists $\xx \in \ZZ^{n+1}$ 
 with $$\rho_1(L_1(\xx)) \cdots \rho_r(L_r(\xx)) > 0.$$
\end{proposition}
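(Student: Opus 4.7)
The plan is to take the main term on the right-hand side of \eqref{eq:main'} and compute its asymptotic behaviour explicitly. First, for each $j$, I would apply Lemma~\ref{lem:rho-in-APs} to replace each $S_{\rho_j}(N;\widetilde{W},A_j)$ by the approximation
$$S_{\rho_j}(N;\widetilde{W},A_j) = \frac{\widetilde{W}}{\phi(\widetilde{W})}\sum_{\chi_j \in \mathcal{E}_{\rho_j}(\widetilde{W})}\chi_j(A_j) \frac{1}{N}\sum_{n \leq N}\rho_j(n)\bar\chi_j(n) + o(E_{\rho_j}(N;\widetilde{W})),$$
and then, for each primitive character $\chi_j$ inducing an element of $\mathcal{E}_{\rho_j}(\widetilde{W})$, evaluate the inner twisted sum via Lemma~\ref{lem:sum-rho-chi} applied to $\rho_j\bar\chi_j$. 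Since $\chi_j \in \mathcal{E}_{\rho_j}$ gives $m(\rho_j\bar\chi_j)=m(\rho_j)$, this produces a main term of order $(\log N)^{m(\rho_j)-1}$ with a constant $c_{(\rho_j\bar\chi_j)^*}$ that is positive when $\chi_j$ is trivial.

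Substituting these expansions back into the main term of \eqref{eq:main'} and pulling out the product $\prod_{j}(\log N)^{m(\rho_j)-1}$, the remaining expression is a finite sum over character tuples $\boldsymbol{\chi}\in\prod_j\mathcal{E}_{\rho_j}$, together with a sum over the $A_j\in(\ZZ/\widetilde{W}\ZZ)^*$ and the $w_j$. For each fixed $\boldsymbol{\chi}$, the character sums over the $A_j$ combine with the local densities $\beta_{\boldsymbol{\varphi}}$ to yield a finite product of local factors at primes dividing $\widetilde W$, which in turn combine with the absolutely convergent Euler factors from Proposition~\ref{prop:frob_zeta} at primes $p\nmid \widetilde W$ to give a well-defined Euler product $E_{\boldsymbol{\chi}}$. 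Verification that the tails in $w_j$ and in $p$ converge (uniformly in $N$) follows from the polynomial bound $w_j\leq(\log N)^{B_2}$ together with the estimates in Proposition~\ref{prop:frob_zeta} and Lemma~\ref{lem:primes}. The constant $C_{\boldsymbol{\rho},\mathbf{L}}$ then equals the finite sum $\sum_{\boldsymbol{\chi}} E_{\boldsymbol{\chi}}$.

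For the positivity assertion, non-negativity $C_{\boldsymbol{\rho},\mathbf{L}}\geq 0$ follows at once since the left-hand side of \eqref{eq:main'} is a sum of non-negative quantities. Necessity of the hypothesis is trivial. For sufficiency, given $\xx_0\in\ZZ^{n+1}$ with $\prod_j\rho_j(L_j(\xx_0))>0$, the plan is to bound the original sum from below by directly exhibiting a positive proportion of $\xx\in(B\fK+\boldsymbol a)\cap\ZZ^{n+1}$ on which each $\rho_j(L_j(\xx))$ inherits positivity. Concretely, one fixes a modulus $M$ collecting the small-prime support of $L_j(\xx_0)$, and restricts to the coset $\xx_0+M\ZZ^{n+1}$: on this coset the bounded-prime part of each $L_j(\xx)$ equals that of $L_j(\xx_0)$, so $\rho_j(L_j(\xx))$ is $\rho_j$ evaluated at the large-prime part times a positive constant. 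Applying the Selberg--Delange asymptotic of Lemma~\ref{lem:sum-rho} to the rescaled convex region $B\fK^+$ (non-empty by hypothesis) inside this coset produces a lower bound of the required order $B^{n+1}\prod_j(\log B)^{m(\rho_j)-1}$, which forces $C_{\boldsymbol{\rho},\mathbf{L}}>0$.

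The main obstacle is this positivity step. The alternating structure of $\sum_{\boldsymbol{\chi}}E_{\boldsymbol{\chi}}$ (see Remark~\ref{rem:const}) means positivity cannot be read off from the trivial-character summand alone, so one cannot argue term-by-term in $\boldsymbol{\chi}$. The coset-plus-Selberg--Delange lower bound described above sidesteps this issue by producing the correct order of growth from the single-$\xx_0$ hypothesis and then using the matching upper bound implicit in the asymptotic to extract the required strict positivity of $C_{\boldsymbol{\rho},\mathbf{L}}$.
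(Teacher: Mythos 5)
Your first two paragraphs, computing the asymptotic expansion of the main term, track the paper's argument reasonably closely: both routes pass through Lemma~\ref{lem:rho-in-APs} and Lemma~\ref{lem:sum-rho-chi} (the paper packages this as Lemma~\ref{lem:lifting}) and then reassemble the character sums, the sums over $A_j$, and the local densities $\beta_{\boldsymbol L}$ into a constant. The paper's write-up is more careful about truncating the $w_j$-sum and splitting into small and large primes, but the skeleton is the same.

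The positivity argument is where your proposal breaks down, and you have in fact put your finger on exactly the obstacle: decomposing $C_{\boldsymbol\rho,\boldsymbol L}$ as $\sum_{\boldsymbol\chi} E_{\boldsymbol\chi}$ produces terms with no sign control, so term-by-term reasoning in $\boldsymbol\chi$ fails. But your proposed workaround — restrict to the coset $\xx_0+M\ZZ^{n+1}$, factor off the small-prime parts, and then ``apply the Selberg--Delange asymptotic of Lemma~\ref{lem:sum-rho}'' — does not give a lower bound. Lemma~\ref{lem:sum-rho} is a one-variable statement about $\sum_{n\le x}\rho(n)$; after your coset restriction you are still facing the genuinely multi-variable correlation $\sum_{\xx}\prod_j\rho_j(w_j(\xx))$ where the $w_j(\xx)$ are the (coprime-to-$M$) parts of $L_j(\xx)$, and evaluating such correlations is precisely the content of Theorem~\ref{thm:mat16} and the Green--Tao machinery: if it could be done by a single Selberg--Delange application the entire analytic apparatus of the paper would be superfluous. (Attempting to invoke Theorem~\ref{thm:mat16} itself here would be circular, since its positivity is what you are trying to prove.) The paper instead organises the constant not by character tuples $\boldsymbol\chi$ but by residue tuples $\boldsymbol b=(b_1,\dots,b_r)$ with $b_j\in(\ZZ/q_{\rho_j}\ZZ)^*$: Lemma~\ref{lem:lifting} shows that each $C_{b_j}(\rho_j)=\tfrac{e^{-\gamma m(\rho_j)}}{\Gamma(m(\rho_j))}\sum_{\chi\in\mathcal{E}_{\rho_j}(q_{\rho_j})}\bar\chi(b_j)$ is real and nonnegative, being (up to a positive factor) the leading constant of a mean value of the nonnegative function $\rho_j$ in a progression, and that $C_1(\rho_j)>0$. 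The full constant is then a sum $\sum_{\boldsymbol b}\beta_{P(B_0)}(\boldsymbol b)\prod_j C_{b_j}(\rho_j)$ of \emph{nonnegative} terms, so it can be bounded below by the $\boldsymbol b=(1,\dots,1)$ term, and $\beta_{P(B_0)}(1,\dots,1)>0$ follows directly from the existence of $\xx_0$ (it contributes the nonzero term $w_j^*=L_j(\xx_0)$ to the local-density sum in \eqref{eq:ll-consequence-2}). This ``group by residue class, not by character'' reorganisation is the key idea missing from your proposal.
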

 
\begin{remark}[Leading constant] \label{rem:leading-constant}
 The proof yields the following information on the leading constant.
 Let $\mathcal{E}_{\rho_i}$ denote the set of primitive Dirichlet characters $\chi$ such that
 $m(\rho_i) = m(\chi \rho_i)$, 
 let $q_{\rho_i}$ denote the least common multiple of conductors of the elements of $\mathcal{E}_{\rho_i}$,
 and let $H \in \NN$ be such that Definition \ref{def:e-weak_frobenian}(3) holds for all the $\rho_i$.
 If $B_0 \geq 1$ is sufficiently large in terms of $r$, $s$, $H$, $q_{\rho_1}, \dots, q_{\rho_r}$ and the coefficients
 of linear forms $L_i(\xx)-L_i(\boldsymbol{0})$, then
 \begin{align*}
  &\sum_{\xx \in (B\mathfrak{K} + \boldsymbol{a}) \cap \ZZ^{n+1}} \rho_1(L_1(\xx)) \cdots \rho_r(L_r(\xx)) \\
  &= B^{n+1} \Big(C_{{\boldsymbol\rho},{\boldsymbol L}}^* \vol \fK^+ + O(B_0^{-1/2})+ o_{B \to \infty}(1)\Big)
    \prod_{j=1}^r \frac{1}{\log B}
    \prod_{B_0 < p \leq B} \Big(1 + \frac{\rho_j(p)}{p} \Big)
 \end{align*}
 for all $B>B_0$,
 where
 \begin{align*}
 & C_{{\boldsymbol\rho},{\boldsymbol L}}^* =
\bigg(\prod_{j=1}^r 
 \frac{e^{-\gamma m(\rho_j)}}{\Gamma(m(\rho_j))}
\bigg) 
\sum_{\substack{b_1, \dots, b_r \\ b_i \in (\ZZ/ q_{\rho_i}\ZZ)^*}}
 \bigg(\prod_{j=1}^r 
 \sum_{\chi \in \mathcal{E}_{\rho_j}(q_{\rho_j})} \bar\chi(b_j) 
 \bigg)
  \bigg(\prod_{p \leq B_0}(1-p^{-1})^{-r} 
  \bigg)
\\
 & \times 
 \sum_{\substack{u_1, \dots, u_r \\ p\mid u_i \Rightarrow p \leq B_0}}
 \sum_{\substack{A_1,\dots,A_r 
 \\ \in (\ZZ/Q_0 \ZZ)^*:
 \\ A_i \equiv b_i \Mod{q_{\rho_i} } }}
 \bigg(\prod_{i=1}^r 
 \rho_i(u_i)  
 \bigg) 
 \frac{1}{(uQ_0)^s}
 \sum_{\substack{\bv \in \\ (\ZZ/u Q_0 \ZZ)^s}}
 \prod_{j=1}^r 
 \1_{L_j(\bv) \equiv u_j A_j \Mod{u_j Q_0}}
\end{align*}
with $Q_0 = \prod_{p\leq B_0} p^{1+ v_p(q)}$, where
$q = \prod_{i=1}^r \prod_{\chi \in \mathcal{E}_{\rho_i}} q_{\chi}$,
and $u = \lcm(u_1, \dots, u_r)$.
Note that the character sums that appear in the expression for
$C_{{\boldsymbol\rho},{\boldsymbol L}}^*$ prevent us from being able to factorise 
this expression as a product over primes in general.
\end{remark}

\begin{proof}[Proof of Theorem \ref{thm:frob} assuming Proposition \ref{prop:right-order-asymptotics}]
 In view of the proposition, it suffices to prove that $C_{\fK, \boldsymbol{\rho}, \mathbf{L}} = 0$ if
 $\vol \fK^+=0$, and that $\vol \fK^+>0$ if and only if there exists 
 $\yy \in \fK$ such that $L_j(\yy) > L_j(\boldsymbol{0})$ for all $1 \leq j \leq r$.
 The latter part is clear from the definition of $\fK^+$ and continuity.
 The former part follows from the proof of Theorem \ref{thm:mat16}: 
 In the notation of the proof, we have
 $$\#~ \ZZ^{n+1} \cap ((N\fK + \boldsymbol{a})\Delta N\fK \cup \fK_N^+ \Delta N\fK^+) \ll N^n,$$
 and, since $\fK^+ \subset \RR^{n+1}$ is convex, it follows from a volume-packing argument 
 (see \cite[Appendix A]{GT-linearprimes}) 
 that
 $\#~ \ZZ^{n+1} \cap (N\fK^+) = \vol_{n+1}\fK^+ + O(N^n) = O(N^n)$ if 
 $\vol \fK^+ = \vol_{n+1} \fK^+= 0$.
 Thus, if $\vol \fK^+=0$, the same Cauchy-Schwarz application as in that proof shows that  
 the contribution of all of $N\fK + \boldsymbol{a}$ can be included in the error term, i.e.\ the main term is zero.
\end{proof}

Assuming that $\rho$ is a non-negative frobenian multiplicative function,
the following lemma provides an asymptotic formula for the mean values 
$S_{\rho}(N; \widetilde{W}(N),A)$ that appear in the main term \eqref{eq:main'}.
It will later be used to `lift' any given `starting point' $\xx$ for which 
$\rho_1(L_1(\xx)) \dots \rho_r(L_1(\xx)) > 0$ 
and deduce from the existence of such a point that the main term in \eqref{eq:main'} 
is of the correct order of magnitude.

\begin{lemma}[Lifting property] \label{lem:lifting}
Let $\rho$ be a non-negative frobenian multiplicative function with $m(\rho) > 0$, let
$\mathcal{E}_{\rho}$ denote the set of characters from Lemma \ref{lem:rho-in-APs},
and let $q_{\rho}$ denote the least common multiple of all conductors of characters from this set.
Let $N>1$ be a parameter and let $\widetilde{W}= \widetilde{W}(N)$ be as in Theorem \ref{thm:mat16},
in particular $q_{\rho}\mid\widetilde{W}$.
If further $\gcd(A,\widetilde{W})=1$, then
\begin{align*}
 &S_{\rho}(N; \widetilde W(N), A) = 
 \\
 &  \bigg( \frac{e^{-\gamma m(\rho)}}{\Gamma(m(\rho))}
    \sum_{\chi \in \mathcal{E}_{\rho}(q_{\rho})} \bar\chi(A) + o_{N\to \infty}(1)\bigg)
    \frac{\widetilde W}{\phi(\widetilde W)} 
    \frac{1}{\log N}
    \prod_{p \leq N, p \nmid \widetilde W}\Big(1+ \frac{\rho(p)}{p} \Big),
\end{align*}
where $\gamma$ denotes Euler's constant. 
The leading constant above is real and non-negative and it only depends on the residue class of $A \Mod{q_{\rho}}$.
If $A=1$, the constant is positive.
\end{lemma}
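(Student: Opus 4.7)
\medskip

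\noindent\textbf{Plan.} The strategy is to first replace the arithmetic progression condition by a character sum using Lemma \ref{lem:rho-in-APs}, then evaluate each twisted mean with Lemma \ref{lem:sum-rho-chi}, and finally compare the resulting Euler products against the target using Proposition \ref{prop:frob_zeta} and Mertens' theorem. The positivity and reality claims then follow formally from the non-negativity of $\rho$.

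\medskip

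\noindent\textbf{Step 1: Character decomposition.} Apply Lemma \ref{lem:rho-in-APs} with $x = N$, $X = N$ and $q = \widetilde{W}$. Since $W(N) \mid \widetilde{W}$, the hypothesis that $p \mid \widetilde{W}$ for every prime $p < \log\log N$ is satisfied, and since $\widetilde W \ll (\log N)^{1+o(1)}$ the bound $\widetilde W \leq (\log N)^C$ is satisfied for any fixed $C>1$ and large $N$. This yields
\[
S_\rho(N;\widetilde W,A) = \frac{\widetilde W}{\phi(\widetilde W)}\sum_{\chi \in \mathcal{E}_\rho(\widetilde W)}\chi(A)\cdot\frac{1}{N}\sum_{\substack{n\leq N\\ \gcd(n,\widetilde W)=1}}\rho(n)\bar\chi(n) + o(1)E_\rho(N;\widetilde W).
\]
Because every $\chi \in \mathcal{E}_\rho$ has conductor dividing $q_\rho$, the value $\chi(A)$ depends only on $A\bmod q_\rho$.

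\medskip

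\noindent\textbf{Step 2: Evaluating each twisted mean.} For each primitive $\chi \in \mathcal{E}_\rho$, the product $\rho\bar\chi$ is a frobenian multiplicative function whose mean equals $m(\rho)$ by definition of $\mathcal{E}_\rho$. Apply Lemma \ref{lem:sum-rho-chi} to $\rho\bar\chi$ with modulus $\widetilde W$:
\[
\frac{1}{N}\sum_{\substack{n\leq N\\ \gcd(n,\widetilde W)=1}}\rho(n)\bar\chi(n) = c_{(\rho\bar\chi)^*}(\log N)^{m(\rho)-1}\prod_{\substack{p\mid \widetilde W\\ p\notin \mathcal{S}_0}}\Bigl(1-\tfrac{\rho(p)\bar\chi(p)}{p}\Bigr) + o\Bigl(\tfrac{\phi(\widetilde W)E_{\rho}(N;\widetilde W)}{\widetilde W}\Bigr).
\]
Proposition \ref{prop:frob_zeta}, via the standard Selberg--Delange formula, identifies the leading constant as
\[
c_{(\rho\bar\chi)^*} = \frac{1}{\Gamma(m(\rho))}\prod_{p\notin\mathcal{S}_0}\frac{(1-1/p)^{m(\rho)}}{1-\rho(p)\bar\chi(p)/p}\cdot\prod_{p\in\mathcal{S}_0}(1-1/p)^{m(\rho)}.
\]

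\medskip

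\noindent\textbf{Step 3: Assembling and comparison with the target.} Combining the two previous displays, the factor $\prod_{p\mid \widetilde W,\,p\notin\mathcal S_0}(1-\rho(p)\bar\chi(p)/p)$ cancels the corresponding reciprocals in $c_{(\rho\bar\chi)^*}$. Using that $\mathcal{S}_0\subset\{p:p\mid \widetilde W\}$ for $N$ large, what remains is
\[
\frac{1}{\Gamma(m(\rho))}\Bigl(\tfrac{\phi(\widetilde W)}{\widetilde W}\Bigr)^{m(\rho)}\prod_{p\nmid\widetilde W}\frac{(1-1/p)^{m(\rho)}}{1-\rho(p)\bar\chi(p)/p}\cdot(\log N)^{m(\rho)-1}.
\]
Mertens' theorem $\prod_{p\leq N}(1-1/p)^{-1}\sim e^\gamma\log N$ converts the factor $(\log N)^{m(\rho)}$ (absorbed into the above) into $e^{-\gamma m(\rho)}(\phi(\widetilde W)/\widetilde W)^{-m(\rho)}\prod_{p\leq N,\,p\nmid\widetilde W}(1-1/p)^{-m(\rho)}$ up to $1+o(1)$. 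The convergent Euler product from Proposition \ref{prop:frob_zeta} applied to $(\rho\bar\chi)^*$ then shows that the full tail $\prod_{p\nmid\widetilde W}(1-\rho(p)\bar\chi(p)/p)^{-1}(1-1/p)^{m(\rho)}$ telescopes with $\prod_{p\leq N,\,p\nmid\widetilde W}(1-1/p)^{-m(\rho)}$ into $\prod_{p\leq N,\,p\nmid\widetilde W}(1+\rho(p)/p)(1+o(1))$, which is the desired shape. Tracking the surviving constant yields precisely the factor $\frac{e^{-\gamma m(\rho)}}{\Gamma(m(\rho))}\sum_{\chi\in\mathcal{E}_\rho(q_\rho)}\bar\chi(A)$, after relabelling $\chi\leftrightarrow\bar\chi$ in the outer sum (permissible since $\mathcal{E}_\rho$ is closed under complex conjugation, as $\rho$ and $m(\rho)$ are real).

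\medskip

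\noindent\textbf{Step 4: Positivity and reality.} The closure of $\mathcal{E}_\rho$ under conjugation immediately gives $\sum_{\chi}\bar\chi(A)\in\RR$. Non-negativity follows from the asymptotic itself: $S_\rho(N;\widetilde W,A)\geq 0$ while the companion factors $\frac{\widetilde W}{\phi(\widetilde W)}\frac{1}{\log N}\prod_{p\leq N,\,p\nmid\widetilde W}(1+\rho(p)/p)$ are strictly positive, so the constant cannot be negative. For $A=1$ we have $\bar\chi(1)=1$ for every $\chi$, and the trivial character is always in $\mathcal{E}_\rho$ since $m(\rho\cdot 1)=m(\rho)>0$, so the sum is a positive integer and the constant is strictly positive. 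The \emph{main obstacle} is the bookkeeping in Step 3: the two lemmas produce local factors over distinct sets of primes (those dividing $\widetilde W$, those in $\mathcal{S}_0$, those outside $\widetilde W$), and one must carefully match them against $\prod_{p\leq N,\,p\nmid\widetilde W}(1+\rho(p)/p)$ while keeping the $o(1)$ errors uniform in $A$ and $\widetilde W$.
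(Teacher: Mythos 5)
Your proposal is correct, but it takes a noticeably different route through the Euler products than the paper does, and the trade-offs are worth pointing out.

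After the shared Step 1 (Lemma \ref{lem:rho-in-APs}), you apply Lemma \ref{lem:sum-rho-chi} directly to $\rho\bar\chi$, identify the constant $c_{(\rho\bar\chi)^*}$ explicitly from the Selberg--Delange method (via Proposition \ref{prop:frob_zeta} and the formula $c=\Gamma(m(\rho))^{-1}\lim_{s\to1}(s-1)^{m(\rho)}F(s)$), and then close the gap with Mertens' theorem and the telescoping of Euler products. The paper instead keeps $S_{\rho^*\chi}(N)$ symbolic, invokes Elliott's theorem (\cite[Thm.~4]{Elliott}) to compare $S_{\rho^*\chi}(N)$ with $S_{\rho^*}(N)$ via the hypothesis $\sum_p(\rho(p)-\re\chi(p)\rho(p))/p<\infty$ (guaranteed by Lemma \ref{lem:finitely-many}(3)), and finally evaluates $S_{\rho^*}(N)$ with Wirsing's theorem. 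Your route is arguably more self-contained — it uses only machinery already developed in \S\ref{sec:frob} plus Mertens — whereas the paper's route leans on two additional classical black boxes but avoids having to manipulate the Euler products over the partition $\{p\in\mathcal S_0\}$, $\{p\mid\widetilde W,\,p\notin\mathcal S_0\}$, $\{p\leq N,\,p\nmid\widetilde W\}$, $\{p>N\}$ by hand.

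Two places where you correctly flag but do not fully spell out the care required. First, the step where $\prod_{p\leq N,\,p\nmid\widetilde W}(1-\rho(p)\bar\chi(p)/p)^{-1}$ is replaced by $\prod_{p\leq N,\,p\nmid\widetilde W}(1+\rho(p)/p)(1+o(1))$ rests on \emph{two} facts that should be made explicit: (i) by Lemma \ref{lem:finitely-many}(3), for $\chi\in\mathcal E_\rho$ we have $\rho(p)\chi(p)=\rho(p)$ for all but finitely many $p$, and for $N$ large all such exceptional primes divide $\widetilde W(N)$ (they are fixed, and $\widetilde W$ absorbs all primes below $\log\log N$), so $\bar\chi(p)$ can be dropped from the product; and (ii) the ratio $\prod_{p\leq N,\,p\nmid\widetilde W}\bigl((1-\rho(p)/p)(1+\rho(p)/p)\bigr)^{-1}=\prod_{p\leq N,\,p\nmid\widetilde W}(1-\rho(p)^2/p^2)^{-1}$ tends to $1$ precisely because the set $\{p\leq N:p\nmid\widetilde W\}$ excludes all primes below $\log\log N$, so the product is a tail of a convergent product. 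Without the observation that $\widetilde W$ swallows all small primes, this ratio would be a nontrivial constant and the statement would be false. Second, you should state explicitly that $c_{(\rho\bar\chi)^*}\neq0$: this follows because, again by Lemma \ref{lem:finitely-many}(3), the Euler product defining $c_{(\rho\bar\chi)^*}$ differs from that for $c_{\rho^*}$ in only finitely many non-vanishing factors, and $c_{\rho^*}>0$ by Lemma \ref{lem:sum-rho}. With these two points filled in, your argument is complete and correct.
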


\begin{remark}
Since the leading constant above only takes values in a finite set, which is determined by $m(\rho)$ and 
$q_{\rho}$, we have 
$$
S_{\rho}(N; \widetilde{W}(N),A) \gg 
\frac{\widetilde{W}}{\phi(\widetilde{W})}\frac{1}{\log N} 
\prod_{p \leq N, p \nmid \widetilde W}\Big(1+ \frac{\rho(p)}{p} \Big)
$$
for all $N \geq 3$ and for all reduced residues $A \Mod{\widetilde W(N)}$
for which $\sum_{\chi \in \mathcal{E}_{\rho}} \bar\chi(A)$ is positive.
\end{remark}

\begin{proof}
By Lemma \ref{lem:rho-in-APs} we have, as $N \to \infty$,
\begin{align*}
 S_{\rho}(N; \widetilde W, A)
 = \frac{\widetilde W}{\phi(\widetilde W)} 
   \sum_{\chi \in \mathcal{E}_{\rho}(q_{\rho})} \bar\chi(A) 
   \frac{1}{N} \sum_{n \leq N} \rho(n) \chi^*(n) 
   + o(E_{\rho}(N;\widetilde W)),
\end{align*}
where $\widetilde{W} = \widetilde{W}(N)$ and where $\chi^* \Mod{\widetilde{W}}$ denotes the character
induced by $\chi$.
We seek to relate each of the mean values 
$S_{\rho \chi^*}(N) = \frac{1}{N} \sum_{n \leq N} \rho(n) \chi^*(n)$ in this expression 
to $S_{\rho} (N)$.
To start with, let $\rho^*$ denote the completely multiplicative function whose values at primes 
are given by $\rho^*(p) = \rho(p) \1_{p \not\in S}$.
Increasing $S$ if necessary, we may suppose that $S$ contains all primes $p \leq H^{8}$.
By Lemma \ref{lem:sum-rho-chi}, $S_{\rho \chi^*}(N)$ thus equals:
\begin{align*}
 \frac{1}{N}\sum_{\substack{n\leq N \\ (n, \widetilde{W})=1}}
 \rho(n)\chi(n)
 = S_{\rho^* \chi}(N) 
 \prod_{p \mid \widetilde{W}, p \not\in S} \left(1 - \frac{\chi(p)\rho(p)}{p}\right)
 +o\bigg(\frac{\phi(\widetilde W)E_{\rho\chi}(N,\widetilde{W})}{\widetilde W}\bigg).
\end{align*}
Concerning the mean value $S_{\rho^* \chi}(N)$, 
recall that $m(\rho) = m(\rho\chi)$ for all $\chi \in \mathcal{E}_{\rho}$, 
and note that Lemma \ref{lem:finitely-many}(3) thus implies
\begin{align*}
 \sum_{\substack{p \text{ prime},\, p\not\in S}} 
 \frac{\rho(p) - \re \chi(p)\rho(p)}{p} < \infty.
\end{align*}
It therefore follows from Elliott \cite[Theorem 4]{Elliott} that
\begin{align*}
 S_{\rho^* \chi}(N)
  = S_{\rho^*}(N) 
   \prod_{p\leq N,\, p\not\in S} 
   \frac{1 - \rho(p)p^{-1}} {1 - \chi(p)\rho(p)p^{-1}}
   + o\left(S_{\rho^*}(N) \right)
\end{align*}
as $N \to \infty$.
Since $m(\rho) = m(\rho\chi)$, it further follows from Lemma \ref{lem:primes} that
\begin{align*}
\prod_{w(N)\leq p\leq N} \left(1 - \frac{\chi(p)\rho(p)}{p}\right)
= (1+o(1))\left(\frac{\log N}{\log w(N)}\right)^{-m(\rho)},
\end{align*}
where the leading constant is $1$ and no longer depends on $\chi$,
and that 
$$ \prod_{p \mid \widetilde{W}, p \not\in S} \left(1 - \frac{\chi(p)\rho(p)}{p}\right)
\ll \prod_{p \leq w(N), p \not\in S} \left(1 - \frac{\rho(p)}{p}\right)
\ll (\log w(N))^{-m(\rho)}.
$$
Thus, combining all the above, we obtain
\begin{align*}
 S_{\rho \chi^*}(N)
 &= (1+o(1)) S_{\rho^*}(N) \left(\frac{\log N}{\log w(N)}\right)^{m(\rho)}
   \prod_{p\leq N, p \not \in S} \left(1 - \frac{\rho(p)}{p}\right) \\
 &\quad  + o\left(S_{\rho^*}(N) (\log w(N))^{-m(\rho^*)} \right)\\
 &=  \frac{e^{-\gamma m(\rho)}}{\Gamma(m(\rho))} \frac{1}{\log N}
  \left(\frac{\log N}{\log w(N)}\right)^{m(\rho)} 
  + o\left(\frac{(\log N)^{m(\rho)-1}}{ (\log w(N))^{m(\rho)}} \right),
\end{align*}
where we applied Wirsing's theorem \cite[Satz 1.1]{Wirsing} together with
the identity $m(\rho^*) = m(\rho)$ to $S_{\rho^*}(N)$ in order to express the leading constant explicitly.
Hence, the average value of $\rho$ in progressions modulo $\widetilde{W}(N)$ satisfies:
\begin{align*}
 &S_{\rho}(N; \widetilde W(N), A) \\
 &= \frac{\widetilde W}{\phi(\widetilde W)} 
   \frac{e^{-\gamma m(\rho)}}{\Gamma(m(\rho))} \frac{1}{\log N}
  \left(\frac{\log N}{\log w(N)}\right)^{m(\rho)}
  \sum_{\chi \in \mathcal{E}_{\rho}(q_{\rho})} \bar\chi(a) 
  + o(E_{\rho}(N;\widetilde W)) \\
 &= \frac{\widetilde W}{\phi(\widetilde W)} 
   \frac{e^{-\gamma m(\rho)}}{\Gamma(m(\rho))} \frac{1}{\log N}
  \bigg(\prod_{p \leq N, p \nmid \widetilde W}\Big(1+ \frac{\rho(p)}{p} \Big)\bigg)
  \sum_{\chi \in \mathcal{E}_{\rho}(q_{\rho})} \bar\chi(a) 
  + o(E_{\rho}(N;\widetilde W)),
\end{align*}
as claimed.
\end{proof}

\begin{proof}[Proof of Proposition \ref{prop:right-order-asymptotics}]
Our aim is to evaluate the main term of the asymptotic formula from Theorem \ref{thm:mat16} for $s=n+1$,
$N = B$ and ${\boldsymbol \varphi} = \boldsymbol{L}$.

By Lemma \ref{lem:lifting} there exists for every 
$j \in \{1, \dots, r\}$ and $b_j \in (\ZZ/q_{\rho_j}\ZZ)^*$
a constant $C_{b_j}(\rho_j)\geq 0$ such that
\begin{align} \label{eq:ll-consequence-1}
S_{\rho_j}(N;\widetilde{W},A_j) 
= (C_{b_j}(\rho_j) +o(1)) \frac{\widetilde{W}}{\phi(\widetilde{W})}\frac{1}{\log N} 
\prod_{p \leq N, p \nmid \widetilde W}\Big(1+ \frac{\rho_j(p)}{p} \Big), 
\end{align} 
uniformly for all reduced residues $A_j \Mod{\widetilde{W}(N)}$ that satisfy $A_j \equiv b_j \Mod{q_{\rho_j}}$.
Thus, Lemma \ref{lem:lifting} allows us to reduce the task of evaluating the main term of \eqref{eq:main'} 
to that of evaluating the expression
\begin{align} \label{eq:a_i-reduction}
\sum_{\substack{w_1, \dots, w_r 
 \\ p\mid w_i \Rightarrow p\mid \widetilde W
 \\ w_i \leq (\log N)^{B_2}}}
\sum_{\substack{A_1,\dots,A_r  \in (\ZZ/\widetilde W \ZZ)^* \\ A_i \equiv b_i \Mod{q_{\rho_i}}}}
 \beta_{\boldsymbol L}(w_1A_1, \dots, w_rA_r) 
 \prod_{j=1}^r  \rho_j(w_j) 
\end{align}
for any given tuple $(b_1, \dots, b_r) \in (\ZZ/ q_{\rho_1}\ZZ)^* \times \dots \times (\ZZ/ q_{\rho_r}\ZZ)^*$
and for 
\begin{equation} \label{eq:beta_L}
\beta_{\boldsymbol L}(w_1A_1, \dots, w_rA_r)
=\frac{1}{(w\widetilde W)^s}
\sum_{\substack{{\boldsymbol v} \in \\ (\ZZ/w \widetilde W \ZZ)^s}}
\prod_{j=1}^r 
\1_{L_j({\boldsymbol v}) \equiv w_j A_j ~(w_j \widetilde W)} 
\end{equation}
with $w = \lcm(w_1, \dots, w_r)$.

Before considering \eqref{eq:a_i-reduction} more carefully, let us indicate how the 
existence of a point $\xx \in \ZZ^{n+1}$ with the property $\rho_1(L_1(\xx)) \cdots \rho_r(L_r(\xx)) > 0$
implies positivity of the leading constant:
By Lemma \ref{lem:lifting} we have $C_{b_j}(\rho_j) > 0$ in the special case when $b_j=1$ 
in \eqref{eq:ll-consequence-1}.
Given $\xx \in \ZZ^{n+1}$ as above, we write $w^*_j := L_j(\xx)$ for each $j \in \{1, \dots, r\}$, 
and assume that $N$ is sufficiently large so that $p\mid w^*_j$ implies $p \mid \widetilde{W}(N)$.
Since $\xx$ solves the system
$$L_j(\xx) = w^*_j, \quad \rho_j(w^*_j) > 0, \qquad (1 \leq j \leq r),$$
it follows, writing $w=\lcm(w_1, \dots, w_r)$ as before, that
\begin{align} \label{eq:ll-consequence-2}
 \sum_{\substack{w_1, \dots, w_r 
 \\ p\mid w_j \Rightarrow p < B_0}}  
 \sum_{\substack{A_1,\dots,A_r \\ \in (\ZZ/Q \ZZ)^* \\ A_j \equiv 1 \Mod{q_{\rho_j}}}}
 \hspace{-.2cm}
 \bigg(\prod_{i=1}^r 
 \rho_i(w_i)  \bigg)~
 \frac{1}{(w Q)^s}
 \sum_{\substack{\bv \in \\ (\ZZ/w Q \ZZ)^s}}
 \prod_{j=1}^r 
 \1_{\substack{L_j(\bv) \equiv w_j A_j \\ \,\, \Mod{w_j Q}}}
> 0
\end{align}
for every $Q \in \NN$, 
provided $B_0$ is sufficiently large for $(w^*_1, \dots, w^*_r)$ to appear in the outer sum.

Our analysis of \eqref{eq:a_i-reduction} will rest upon the fact that the quantities 
$\beta_{\boldsymbol L}$ are closely related to the 
notion of local divisor densities studied in \cite[p.1831]{GT-linearprimes} and \cite[\S5.2]{BM17}.
We proceed by recalling this notion as well as some of the properties essential to this proof. 
For this purpose, let $\boldsymbol{\varphi} = (\varphi_1, \dots, \varphi_r)$ be a finite complexity
system of linear polynomials.
If $\mathbf{c}=(c_1, \dots, c_r) \in \NN_0^r$ and $m = \max \{ c_1, \dots, c_r \}$, then 
the associated divisor density is defined to be
$$
\alpha_{\boldsymbol{\varphi}}(p^{c_1},\dots,p^{c_r}) := 
\frac{1}{p^{ms}} \sum_{\mathbf u\in (\ZZ/p^m \ZZ)^s}
\prod_{i=1}^{r} \mathbf 1_{\varphi_i(\mathbf{u}) \equiv 0 \Mod{p^{c_i}}}.
$$
Divisor densities can, away from finitely many primes, be asymptotically evaluated and we have
(cf. \cite[eqn (5.6)]{BM17}):
\begin{equation}\label{eq:ev-alpha-1}
\alpha_{\boldsymbol{\varphi}}(p^{c_1},\dots,p^{c_r}) \begin{cases}
=1, &\mbox{if $n(\mathbf c)=0$,}\\
= p^{-\max_i \{c_i\}}, &  \mbox{if $p\gg_L 1$ and $n(\mathbf c)=1$,}\\
\leq p^{-\max_{i\neq j}\{c_i+c_j\}},
      & \mbox{if $p\gg_L 1$ and $n(\mathbf c)>1$,}\\
\ll_{L} p^{-\max_i \{c_i\}}, &
 \mbox{otherwise,}
\end{cases}
\end{equation}
where $n(\mathbf c)$ denotes the number of non-zero components of $\mathbf c$,
where
$$L = \max_{1 \leq i \leq r} \{\|\varphi_i\|,r,s\}$$
and where $\|\varphi_i\|$ denotes the maximum modulus of the coefficients of
$\varphi_i$.
Extending $\alpha_{\boldsymbol{\varphi}}$ multiplicatively, it follows from \eqref{eq:ev-alpha-1} that
\begin{align*} 
 \alpha_{\boldsymbol{\varphi}}(n_1,\dots,n_r) 
 \ll_L (\lcm(n_1, \dots, n_r))^{-1} 
 \ll_L (\max_j n_j)^{-1}.
\end{align*}
These divisor densities are linked to expressions of the form \eqref{eq:beta_L} for 
$\boldsymbol L=\boldsymbol{\varphi}$ through the following identity.
Let $m \geq a + \max(c_1, \dots, c_r)$ be an integer, then:
\begin{align} \label{eq:alpha-beta-relation}
&\sum_{\substack{a_1, \dots, a_r\\ \in (\ZZ/ p^a \ZZ)^*}}
\frac{1}{p^{ms}}
\sum_{\bv \in (\ZZ/p^m \ZZ)^s} 
\prod_{j=1}^r \1_{\varphi_j(\bv) \equiv p^{c_j} a_j \Mod{p^{a+c_j}}} \\
\nonumber
&=
\frac{1}{p^{ms}}
\sum_{\bv \in (\ZZ/p^m \ZZ)^s} 
\prod_{j=1}^r \1_{p^{c_j} \| \varphi_j(\bv) } 
= \sum_{\eps_1, \dots, \eps_r \in \{0,1\}}
(-1)^{\eps_1 + \dots +\eps_r}
\alpha_{\boldsymbol{\varphi}}(p^{c_1+\eps_1}, \dots, p^{c_r+\eps_r}).
\end{align}

Returning to the expression \eqref{eq:a_i-reduction}, let us consider for any fixed tuple
$(w_1, \dots, w_r)$ the sum over $(A_1, \dots, A_r)$.
By the Chinese remainder theorem, the function $\beta_{\boldsymbol L}$ is multiplicative.
Since the congruence conditions restricting the summation over $(A_1, \dots, A_r)$ 
only involve the finite set of prime factors dividing $q_{\rho_1} \dots q_{\rho_r}$, 
it follows by multiplicativity of $\beta_{\boldsymbol L}$ 
and from the first equality in \eqref{eq:alpha-beta-relation} that
\begin{align} \label{eq:A_i-decomp}
&\sum_{\substack{A_1,\dots,A_r  \in (\ZZ/\widetilde W \ZZ)^* \\ A_i \equiv b_i \Mod{q_{\rho_i}}}}
 \beta_{\boldsymbol L}(w_1A_1, \dots, w_rA_r)  \\
\nonumber 
&= 
\sum_{\substack{A_1,\dots,A_r  \in (\ZZ/\widetilde W \ZZ)^* \\ A_i \equiv b_i \Mod{q_{\rho_i}}}}
\frac{1}{(w\widetilde W)^s}
\sum_{\substack{{\boldsymbol v} \in \\ (\ZZ/w \widetilde W \ZZ)^s}}
\prod_{j=1}^r 
\1_{L_j({\boldsymbol v}) \equiv w_j A_j ~(w_j \widetilde W)} \\ 
\nonumber 
&= 
\Bigg(\sum_{\substack{A_1,\dots,A_r  \in (\ZZ/Q \ZZ)^* \\ A_i \equiv b_i \Mod{q_{\rho_i}}}}
\frac{1}{(uQ)^s}
\sum_{\substack{{\boldsymbol v} \in \\ (\ZZ/u Q \ZZ)^s}}
\prod_{j=1}^r \1_{L_j({\boldsymbol v}) \equiv w_j A_j ~(u_j Q)} \Bigg)\\
\nonumber
& \qquad \times
\prod_{p \mid \widetilde W, p \nmid Q}
\bigg(
\lim_{m \to \infty} 
\frac{1}{p^{ms}}
\sum_{\bv \in (\ZZ/p^{m}\ZZ)^s}
\left( \1_{p^{v_p(w_i)}\mid L_i(\bv)} - \1_{p^{v_p(w_i)+1}\mid L_i(\bv)} \right) \bigg)
\end{align}
whenever $Q$ is a divisor of $\widetilde{W}$ with the property that $\gcd(Q,\widetilde{W}/Q)=1$ and that 
$q_{\rho_i} | Q$ for every $i \in \{1, \dots, r \}$. 
Further, $u_i := \prod_{p\mid Q} p^{v_p(w_i)}$ and $u := \lcm(u_1, \dots, u_r)$ in the first factor above.

To handle the sum over $(w_1, \dots, w_r)$ in \eqref{eq:a_i-reduction}, we will use the decomposition
\begin{align} \label{eq:w_i-decomp}
\sum_{\substack{w_1, \dots, w_r 
 \\ p\mid w_i \Rightarrow p\mid \widetilde W(N)
 \\ w_i \leq (\log N)^{B_2}}}
= \sum_{\substack{w_1, \dots, w_r 
 \\ p\mid w_i \Rightarrow p\mid \widetilde W(N)}}
- \sum_{\substack{w_1, \dots, w_r 
 \\ p\mid w_i \Rightarrow p\mid \widetilde W(N)
 \\ \exists j. w_j > (\log N)^{B_2}}} 
\end{align}
together with the bound
\begin{align} \label{eq:auxiliary-bound}
 \sum_{\substack{w_1, \dots, w_r 
 \\ p\mid w_i \Rightarrow p\mid \widetilde W(N)
 \\ \exists j. w_j > (\log T)^{B_2}}}  
&\sum_{\substack{A_1,\dots,A_r 
 \\ \in (\ZZ/\widetilde W \ZZ)^*}}
 \bigg(\prod_{i=1}^r 
 \rho_i(w_i)  \bigg)
  \beta_{\boldsymbol L}(w_1A_1, \dots, w_rA_r)
\ll_{L} (\log N)^{-B_2/6 + o(1)}.
\end{align}

The latter bound is almost identical to the bound obtained in \cite[equations (11.3) and (11.4)]{Mat16}
and its proof is identical except for one step: 
the final line of [\emph{loc.\ cit.}, (11.4)] needs to be replaced by
$$
\ll_{L} (\log N)^{-B_2/6}~
2^{\omega(\widetilde W(N))} 
\ll_{L} (\log N)^{-B_2/6 }~2^{\pi(\log \log N)}
\ll_{L} (\log N)^{-B_2/6 +o(1)}.$$

The bound \eqref{eq:auxiliary-bound} certainly is
\begin{equation} \label{eq:auxiliary-bound-2}
 o_{N\to \infty}(1) 
\bigg(\frac{\phi(\widetilde{W})}{\widetilde{W}}\bigg)^r 
\prod_{i=1}^r\prod_{p\mid\widetilde{W}} \left(1+\frac{\rho_i(p)}{p}\right)^{-1},
\end{equation}
which will be enough to handle the contribution of the second sum in \eqref{eq:w_i-decomp}
towards \eqref{eq:a_i-reduction}.
Turning towards the first sum in \eqref{eq:w_i-decomp}, we let $B_0 > 0$  
be sufficiently large in terms of $L$ so that the second and third bound of \eqref{eq:ev-alpha-1} 
apply to every $p \geq B_0$ in the case where $\boldsymbol{\varphi}=\boldsymbol{L}$. 
In addition, suppose that $B_0 > 2 r H^r$ and that 
$B_0 > P^+(q_{\rho_1} \dots q_{\rho_r})$ is bounded below by the largest prime factor of 
$q_{\rho_1} \dots q_{\rho_r}$.
Let $Q_0 = \prod_{p\leq B_0} p^{v_p(\widetilde{W}(N))}$ be the factor of $\widetilde{W}(N)$ that is composed
of small primes. 
Then it follows from \eqref{eq:A_i-decomp} that
\begin{align} \label{eq:CRT-appl}
&\sum_{\substack{w_1, \dots, w_r 
 \\ p\mid w_i \Rightarrow p \mid \widetilde W(N)}}
 \bigg( \frac{\widetilde W}{\phi( \widetilde W)} \bigg)^r
 \sum_{\substack{A_1,\dots,A_r 
 \\ \in (\ZZ/\widetilde W \ZZ)^*
 \\ A_i \equiv b_i \Mod{q_{\rho_i} }
 }}
 \bigg(\prod_{i=1}^r 
 \rho_i(w_i) \bigg)
 \beta_{\boldsymbol L}(w_1A_1, \dots, w_rA_r)
 \\
\nonumber
&= 
\prod_{\substack{p\mid \widetilde W(N)\\ p>B_0}}
\sum_{\substack{ a_1, \dots, a_r \\ \in \NN_0 }}
\prod_{i=1}^r \frac{\rho_i(p^{a_i})}{1-p^{-1}}
\bigg(
\lim_{m \to \infty} 
\frac{1}{p^{ms}}
\sum_{\bv \in (\ZZ/p^{m}\ZZ)^s}
\left( \1_{p^{a_i}\mid L_i(\bv)} - \1_{p^{a_i+1}\mid L_i(\bv)} \right) \bigg) \times \\
& \nonumber
 \sum_{\substack{u_1, \dots, u_r 
 \\ p\mid u_i \Rightarrow p<B_0}}
 \left( \frac{Q_0}{\phi(Q_0)} \right)^r
 \hspace{-.7cm}
 \sum_{\substack{A'_1,\dots,A'_r 
 \\ \in (\ZZ/Q_0 \ZZ)^*
 \\ A'_i \equiv b_i \Mod{q_{\rho_i} } }}
  \hspace{-.6cm}
 \bigg(\prod_{i=1}^r 
 \rho_i(u_i)  \bigg) 
 \frac{1}{(uQ_0)^s}
 \sum_{\substack{\bv \in \\ (\ZZ/u Q_0 \ZZ)^s}}
 \prod_{j=1}^r 
 \1_{L_j(\bv) \equiv u_j A'_j \Mod{u_j Q_0}},
\end{align}
where $u = \lcm(u_1, \dots, u_r)$ and 
$Q_0 = \gcd(\widetilde W , \prod_{p\leq B_0} p^{\infty})$.
Let $\beta_{P(B_0)}(b_1, \dots, b_r)$ denote the final factor above, i.e.\ 
the factor that involves all primes $p \leq B_0$ and starts with the summation in $(u_1, \dots, u_r)$.
By \eqref{eq:ll-consequence-2}, we have 
$$
\beta_{P(B_0)}(1, \dots, 1) > 0.
$$
For every $p\mid \widetilde W(N)$ with $p \geq B_0$, the relation \eqref{eq:alpha-beta-relation} 
shows that the contribution to the above factorisation can be rewritten as
\begin{align} \label{eq:p>B-factor}
\nonumber
&\sum_{\substack{a_1, \dots, a_r \\ \in \NN_0}}
\prod_{i=1}^r \frac{\rho_i(p^{a_i})}{1 - p^{-1}}
~\lim_{m \to \infty} 
\frac{1}{p^{ms}}
\sum_{\bv \in (\ZZ/p^{m}\ZZ)^s}
\left( \1_{p^{a_i}\mid L_i(\bv)} - \1_{p^{a_i+1}\mid L_i(\bv)} \right) \\
&= \sum_{\substack{a_1, \dots, a_r \\ \in \NN_0}}
\prod_{i=1}^r \frac{\rho_i(p^{a_i})}{1 - p^{-1}}
\sum_{\boldsymbol{\eps} \in \{0,1\}^r}
(-1)^{n(\boldsymbol{\eps})} 
\alpha_{\boldsymbol{L}}(p^{a_1 + \eps_1},\dots,p^{a_r+\eps_r}).
\end{align}
This expression can now be asymptotically evaluated with the help of \eqref{eq:ev-alpha-1}.
Note that whenever the bound \eqref{eq:ev-alpha-1} on 
$\alpha_{\boldsymbol{L}}(p^{a_1 + \eps_1},\dots,p^{a_r+\eps_r})$
takes the form $p^{-k}$ for a given $k > 1$, 
then there at most $2^rk^r$ admissible choices of 
$(a_1, \dots, a_r)$ and $(\eps_1, \dots, \eps_r)$, and for each of these choices we have
$|\rho_i(p^{a_i+\eps_i})| < H^k$ for every $i \in \{1, \dots, r\}$.
Thus, the expression \eqref{eq:p>B-factor} for $p\geq B_0$ equals
\begin{align*}  
& \left(1 - \frac{1}{p} \right) ^{-r} 
\bigg( 1 - \frac{r}{p} + \sum_{i=1}^r \frac{\rho_i(p)}{p} + 
O \bigg( \sum_{k\geq2} \frac{k^r H^{rk}}{p^k} \bigg) \bigg)\\
&= \prod_{i=1}^r 
   \left(1 + \frac{\rho_i(p) }{p} \right) 
   + O_{H,r}(p^{-2})
= (1 + O_{H,r}(p^{-2}))
\prod_{i=1}^r 
   \left(1 + \frac{\rho_i(p) }{p} \right),
\end{align*}
and is certainly non-zero as soon as $B_0$ is sufficiently large in terms of $H$ and $r$.
By possibly increasing its value, we may suppose that $B_0$ is sufficiently large for
$$
\prod_{p \geq B_0} (1 + O_{H,r}(p^{-2})) = 1 + O(B_0^{-1/2}) > 0
$$
to hold.

Taking everything together, that is, applying first Lemma \ref{lem:lifting} 
to \eqref{eq:main'} and then combining \eqref{eq:a_i-reduction}
with \eqref{eq:w_i-decomp}, the bounds \eqref{eq:auxiliary-bound} and \eqref{eq:auxiliary-bound-2},
as well as the above analysis of \eqref{eq:CRT-appl}, 
the main term of \eqref{eq:main'} is seen to satisfy:
\begin{align} \label{eq:lem3.11-asymp}
 \nonumber
 & \sum_{\substack{w_1, \dots, w_r 
 \\ p\mid w_i \Rightarrow p\mid \widetilde W
 \\ w_i \leq (\log N)^{B_2}}}
 \sum_{\substack{A_1,\dots,A_r \\ \in (\ZZ/\widetilde W \ZZ)^*}}
 \bigg(\prod_{j=1}^r 
 \rho_j(w_j) S_{\rho_j}\Big(T;\widetilde W,A_j\Big)\bigg)~
 \beta_{\boldsymbol L}(w_1A_1, \dots, w_rA_r) \\
 \nonumber
 &\sim  \sum_{\substack{w_1, \dots, w_r 
 \\ p\mid w_i \Rightarrow p\mid \widetilde W
 \\ w_i \leq (\log N)^{B_2}}}
 \sum_{\substack{b_1, \dots, b_r \\ b_i \in (\ZZ/ q_{\rho_i}\ZZ)^*}} 
 \sum_{\substack{A_1,\dots,A_r \\ \in (\ZZ/\widetilde W \ZZ)^* : \\ A_i \equiv b_i (q_{\rho_i})}}
 \frac{\widetilde W}{\phi(\widetilde{W})}
 \Bigg(\prod_{j=1}^r C_{b_j}(\rho_j) \frac{ \rho_j(w_j)}{\log N}  \prod_{p \leq N, p \nmid \widetilde W} 
 \Big(1 + \frac{\rho_j(p)}{p} \Big)
 \Bigg)\times\\
 \nonumber
 &\qquad \qquad \qquad \qquad \qquad \qquad \qquad \qquad \qquad \qquad \qquad \qquad 
 \times \beta_{\boldsymbol L}(w_1A_1, \dots, w_rA_r)\\
 \nonumber
 &\sim 
 C_{\rho_1, \dots, \rho_r}
 \prod_{j=1}^r \frac{1}{\log N}   \prod_{B_0 < p \leq N} 
 \Big(1 + \frac{\rho_j(p)}{p}\Big)\Big( 1 + O_{H,r}(p^{-2})\Big)\\
  &\sim (1 + O(B_0^{-1/2}))
 C_{\rho_1, \dots, \rho_r}
 \prod_{j=1}^r \frac{1}{\log N}   \prod_{B_0 < p \leq N} 
 \Big(1 + \frac{\rho_j(p)}{p}\Big),
\end{align} 
where
\begin{align*}
C_{\rho_1, \dots, \rho_r} &=
\sum_{\substack{b_1, \dots, b_r \\ b_i \in (\ZZ/ q_{\rho_i}\ZZ)^*}}
\beta_{P(B_0)}(b_1, \dots, b_r) C_{b_1}(\rho_1) \dots C_{b_r}(\rho_r)\\
&\geq \beta_{P(B_0)}(1, \dots, 1) C_{1}(\rho_1) \dots C_{1}(\rho_r)
>0.
\end{align*}
We note as an aside that on inserting the explicit expressions for the constants $C_{b_i}(\rho_i)$ 
from Lemma~\ref{lem:lifting} as well as the definition of $\beta_{P(B_0)}(b_1, \dots, b_r)$,
the above yields the information about the leading constant summarised in Remark \ref{rem:leading-constant}.

Finally, Lemma \ref{lem:primes} shows that \eqref{eq:lem3.11-asymp} is, in fact, equal to
$$(1+o(1)) C_{\boldsymbol{\rho},\boldsymbol{L}} \prod_{j=1}^r (\log N)^{m(\rho_j)-1},$$
for some constant $C_{\boldsymbol{\rho},\boldsymbol{L}}>0$, as claimed.
\end{proof}

\section{Families of varieties over $\PP^1$} \label{sec:Serre}
In this section we prove Theorem \ref{thm:Serre}. The upper bound is proved in \cite{LS16}, so it suffices to prove the lower bound.

\subsection{Detectors} \label{sec:detectors}
We first construct frobenian multiplicative functions for detecting the everywhere locally soluble fibres.

\subsubsection{Set-up} 
Let $\pi:V \to \PP^1$ be as in Theorem \ref{thm:Serre}. We fix a choice of primitive integer vector  $\by = (y_0,y_1)$ such that the fibre over $y = (y_0:y_1) \in \PP^1(\QQ)$ is smooth and  everywhere locally soluble; this exists by assumption. We assume for simplicity of exposition that $y_0 \neq 0$.

Let $\Theta(\pi)$ denote the set of rational points of $\PP^1$ which lie below the non-pseudo-split fibers of $\pi$. We let $U = \PP^1 \setminus \{\theta: \theta \in \Theta(\pi)\}$, so that the fibre over every point of $U$ is pseudo-split. For each $\theta \in \Theta(\pi)$ we let $L_\theta(x_0,x_1) \in \ZZ[x_0,x_1]$ be a primitive binary linear form whose zero locus in $\PP^1$ is $\theta$ and such that $L_\theta(\by) > 0$.

Let $S$ be a large finite set of primes such that there exists a smooth proper scheme $\mathcal{V} \to \Spec \ZZ_S$
whose generic fibre is isomorphic to $V$, together with a morphism $\pi: \mathcal{V} \to \PP^1_{\ZZ_S}$ which extends the map 
$V \to \PP^1_{\QQ}$. We will allow ourselves to increase $S$ in this section. In particular, let $\mathcal{U}$ be the complement in $\PP^1_{\ZZ_S}$ of the closure of $\Theta(\pi)$ in $\PP^1_{\ZZ_S}$. Then we may assume that the fibre over every element of $\mathcal{U}$ is pseudo-split. We also enlarge $S$ to include all primes that divide the resultants of any two of the $L_\theta$ and such that $\gcd(L_\theta(\by), p) = 1$ for all $p \notin S$.

For each $\theta \in \Theta(\pi)$, let $\QQ \subset k_\theta$ be a finite Galois extension containing the field of definition of every geometric irreducible component of $\pi^{-1}(\theta)$. We  let $\Gamma_\theta = \Gal(k_\theta/\QQ)$. We assume  $S$ contains all primes which ramify in the $k_\theta$.
Let
$$\delta_\theta(\pi) = \frac{\#\left\{ \gamma  \in \Gamma_\theta : 
\begin{array}{l}
\gamma \mbox{ fixes an irreducible component } \\
\mbox{of $\pi^{-1}(\theta)_{k_\theta}$ of multiplicity $1$}
\end{array}
\right\}}{\# \Gamma_\theta}$$
as in Definition \ref{def:Delta}, and
\begin{equation} \label{def:P_theta}
	\mathcal{P}_\theta = S \cup  \left\{p \notin S: 
	\begin{array}{l}
		\Frob_p \in \Gamma_\theta \mbox{ fixes an irreducible component} \\
		\mbox{of $\pi^{-1}(\theta)_{k_\theta}$ of multiplicity $1$}
	\end{array}
	\right\}.
\end{equation}
This set is frobenian of density $\delta_\theta(\pi)$. Moreover $\delta_\theta(\pi) > 0$, due to our assumption that each fibre contains an irreducible component of multiplicity $1$.
For each $\theta \in \Theta(\pi)$ we define a completely multiplicative function $\varpi_\theta$ via
$$\varpi_\theta(n) =
\begin{cases}
	1, & \forall p \mid n \text{ we have } p \in \mathcal{P}_\theta, \\
	0, & \text{otherwise}.
\end{cases}$$

\begin{lemma} \label{lem:varpi}
	Let $\theta \in \Theta(\pi)$. Then $\varpi_\theta$ is a frobenian multiplicative function of mean $\delta_\theta(\pi)$.
\end{lemma}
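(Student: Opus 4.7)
The plan is to verify directly the three defining conditions of a frobenian multiplicative function (Definitions~\ref{def:e-weak_frobenian} and~\ref{def:class_F}) for $\varpi_\theta$, exploiting the fact that $\varpi_\theta$ takes values in $\{0,1\}$.

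First I would dispose of the easy bounds: since $\varpi_\theta$ is completely multiplicative with values in $\{0,1\}$, we have $|\varpi_\theta(n)| \leq 1 \ll_\varepsilon n^\varepsilon$ for every $\varepsilon \in (0,1)$, so condition~(2) of Definition~\ref{def:e-weak_frobenian} holds trivially, and likewise $|\varpi_\theta(p^k)| \leq 1 = 1^k$, so condition~(3) holds with $H=1$. This reduces the problem to checking that the restriction of $\varpi_\theta$ to primes is a frobenian function in the sense of Definition~\ref{def:frob}, and computing its mean.

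For the frobenian condition I would take the data $(K, S, \varphi)$ of Definition~\ref{def:frob} to be $K = k_\theta$, $\Gamma = \Gamma_\theta = \Gal(k_\theta/\QQ)$, and the finite set $S$ already fixed in the set-up (which by assumption contains $\infty$ and all primes ramifying in $k_\theta$). Let
\[
C_\theta = \left\{\gamma \in \Gamma_\theta :
\begin{array}{l}
\gamma \text{ fixes an irreducible component of} \\
\pi^{-1}(\theta)_{k_\theta} \text{ of multiplicity } 1
\end{array}
\right\}
\]
and set $\varphi = \mathbf{1}_{C_\theta}$. The key point is that $C_\theta$ is stable under conjugation in $\Gamma_\theta$: if $\gamma$ fixes an irreducible component $D$ of multiplicity $1$, then $\sigma\gamma\sigma^{-1}$ fixes $\sigma(D)$, which is again an irreducible component of multiplicity $1$ because $\sigma$ permutes the geometric irreducible components preserving their multiplicities. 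Hence $\varphi$ is a well-defined class function on $\Gamma_\theta$. Comparing with \eqref{def:P_theta}, we have, for every $p \notin S$, the equality $\varphi(\Frob_p) = 1$ if and only if $p \in \mathcal{P}_\theta$, which is exactly $\varpi_\theta(p) = \varphi(\Frob_p)$.

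Finally, the mean is computed directly from Definition~\ref{def:frob}:
\[
m(\varpi_\theta) = \frac{1}{|\Gamma_\theta|}\sum_{\gamma \in \Gamma_\theta} \varphi(\gamma) = \frac{|C_\theta|}{|\Gamma_\theta|} = \delta_\theta(\pi),
\]
by the defining formula for $\delta_\theta(\pi)$. There is no real obstacle here; the only mildly non-trivial step is the conjugation-invariance of $C_\theta$, which is immediate from the observation that the Galois action on geometric components preserves multiplicities.
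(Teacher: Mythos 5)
Your proof is correct and fills in exactly the details the paper treats as immediate (the paper's proof is just ``Follows immediately from the definitions''). In particular you rightly identify the only point requiring thought: that $C_\theta$ is conjugation-invariant because the Galois action permutes the geometric irreducible components of $\pi^{-1}(\theta)_{k_\theta}$ while preserving multiplicities, so $\varphi = \mathbf{1}_{C_\theta}$ is a genuine class function on $\Gamma_\theta$; the remaining conditions of Definitions~\ref{def:e-weak_frobenian} and \ref{def:class_F} and the mean computation are immediate as you say.
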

\begin{proof}
	Follows immediately from the definitions.
\end{proof}

\subsubsection{Large primes} \label{sec:large_primes}

We use these $\varpi_\theta$ to detect
whether a fibre is locally soluble at sufficiently large primes.

\begin{lemma} \label{lem:detector}
	On enlarging $S$ if necessary, the following holds. Let $(x_0,x_1) \in \ZZ^2$ be such that $\gcd(x_0,x_1)=1$.
	If $\prod_{\theta \in \Theta(\pi)} \varpi_\theta(L_\theta(x_0,x_1)) = 1$
	then $\pi^{-1}(x_0:x_1)$ has a $\QQ_p$-point for all $p \notin S$.
\end{lemma}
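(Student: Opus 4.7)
The plan is to verify local solubility one prime at a time, using Hensel's lemma and Lang--Weil estimates to reduce to combinatorial information about the Frobenius action on the geometric components of the singular fibres.

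First I will enlarge $S$ to secure a number of spreading-out properties simultaneously: that $\pi$ is smooth over $\mathcal{U}$; that for any two distinct $\theta,\theta' \in \Theta(\pi)$ the resultant of $L_\theta$ and $L_{\theta'}$ is a unit at every $p \notin S$, so that the reduction $y_p := (x_0:x_1) \bmod p$ agrees with the reduction modulo $p$ of at most one $\theta \in \Theta(\pi)$; that, for each $\theta \in \Theta(\pi)$, there is a $\Gamma_\theta$-equivariant bijection between the geometric irreducible components of $\pi^{-1}(\theta)$ and those of the special fibre of its closure in $\mathcal{V}$, preserving multiplicities and realising $\Gal(\bar\FF_p/\FF_p)$ as acting through $\Frob_p \in \Gamma_\theta$; and an implicit Lang--Weil threshold, so that any geometrically integral multiplicity-one component of a fibre, defined over $\FF_p$, carries a smooth $\FF_p$-point of the ambient fibre once $p \notin S$.

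With these reductions in place, fix $p \notin S$ and write $y_p$ for the reduction of $(x_0:x_1)$. If $y_p \in \mathcal{U}(\FF_p)$, which happens exactly when $p$ divides none of the $L_\theta(x_0,x_1)$, then $(x_0:x_1)$ is a $\ZZ_p$-point of $\mathcal{U}$, its fibre is smooth over $\ZZ_p$, and the special fibre is pseudo-split by the defining property of $\mathcal{U}$; thus $\Frob_p$ fixes a multiplicity-one geometric component of this special fibre, which is therefore defined over $\FF_p$ and integral, Lang--Weil furnishes a smooth $\FF_p$-point, and Hensel lifts it to a $\ZZ_p$-point of the fibre. Otherwise, the resultant condition ensures $p \mid L_{\theta_0}(x_0,x_1)$ for a unique $\theta_0 \in \Theta(\pi)$, and the hypothesis $\prod_\theta \varpi_\theta(L_\theta(x_0,x_1)) = 1$ forces $p \in \mathcal{P}_{\theta_0}$; by definition, $\Frob_p \in \Gamma_{\theta_0}$ fixes a geometric irreducible component of multiplicity $1$ of $\pi^{-1}(\theta_0)_{\bar\QQ}$. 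Transporting this along the equivariant bijection, the special fibre $\mathcal{V}_{\theta_0,\FF_p}$ contains a multiplicity-one geometric component defined over $\FF_p$, and Lang--Weil again yields a smooth $\FF_p$-point $P_0$ of that special fibre.

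The remaining, and most delicate, step is to lift $P_0$ to a $\ZZ_p$-point of the distinct fibre $\pi^{-1}(x_0:x_1)$ rather than of $\pi^{-1}(\theta_0)$. Here the key observation is that $\mathcal{V}$ and $\PP^1_{\ZZ_p}$ are smooth over $\ZZ_p$ and the scheme-theoretic fibre of $\pi$ through $P_0$ is smooth at $P_0$; hence $\pi$ itself is smooth at $P_0$, and the local structure theorem for smooth morphisms lets me lift $P_0$ to a $\ZZ_p$-point of $\mathcal{V}$ lying over \emph{any} $\ZZ_p$-point of $\PP^1$ reducing to $y_p$---in particular, over $(x_0:x_1)$---yielding the required $\QQ_p$-point. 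The chief technical obstacle is arranging the $\Gamma_\theta$-equivariant bijection between the geometric components of the generic and the special fibres of $\mathcal{V}_\theta$ with compatible Frobenius actions and preserved multiplicities, which is a standard but somewhat fiddly application of spreading out that dictates how large $S$ must be taken.
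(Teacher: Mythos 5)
Your proof is correct and follows essentially the same route as the paper: show the mod-$p$ fibre is split (treating the two cases according to whether $p$ divides some $L_\theta(x_0,x_1)$), apply Lang--Weil to produce a smooth $\FF_p$-point, then lift by Hensel. The ``delicate'' step you flag at the end is exactly what the paper compresses into the phrase ``Hensel's lemma'': since $\mathcal{V}$ is smooth over $\ZZ_p$ and $P_0$ is a smooth point of the fibre, $\pi$ is smooth at $P_0$, so the formal-lifting/implicit-function form of smoothness lets one lift $P_0$ to a $\ZZ_p$-point lying over the prescribed $\ZZ_p$-point $(x_0:x_1)$ of $\PP^1$; you are right that this needs to be said, and the paper implicitly absorbs it into the Hensel citation. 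Likewise your insistence on a $\Gamma_\theta$-equivariant, multiplicity-preserving bijection between the geometric components of $\pi^{-1}(\theta)$ and those of its reduction mod $p$, with Frobenius compatibility, is a genuine spreading-out condition that the paper silently folds into ``enlarging $S$ if necessary''; making it explicit is harmless and accurate.
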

\begin{proof}
We claim that $(x_0:x_1) \bmod p$ lies below
	a split fibre. To see this, first suppose that $p \nmid \prod_{\theta \in \Theta(\pi)} L_\theta(x_0,x_1)$. Then $(x_0:x_1) \not\equiv \theta \bmod p$ for 
	all $\theta \in \Theta(\pi)$. Thus $(x_0:x_1) \bmod p \in \mathcal{U}$, hence 
	the fibre over $(x_0:x_1) \bmod p$ is pseudo-split by construction. But a pseudo-split scheme
	over a finite field is split, as required.
	Next assume  that $p \mid L_\theta(x_0,x_1)$ for some $\theta \in \Theta(\pi)$,
	so that $(x_0:x_1) \equiv \theta \bmod p$.
	Then $p \in \mathcal{P}_\theta$ as $\varpi_\theta(p)=1$,
	hence $\Frob_p \in \Gamma_\theta$ fixes an irreducible component of multiplicity
	one of the fibre.
	So the fibre over $(x_0:x_1) \bmod p$ is  split,	as required.

	Thus, on enlarging $S$ if necessary, the Lang--Weil estimates \cite{LW54} imply that $\pi^{-1}(x_0:x_1) \bmod p$
	contains a smooth $\FF_p$-point ($S$ may be chosen uniformly for all $(x_0,x_1)$, due to the uniformity 
	of the Lang--Weil estimates). Hensel's lemma therefore implies that the fibre $\pi^{-1}(x_0:x_1)$ has a $\QQ_p$-point,
	as required.
\end{proof}

We now fix a choice of $S$ satisfying the above properties.

\subsubsection{Real points and small primes} \label{sec:small_primes}
Recall that the fibre over $(y_0:y_1)$ is smooth and everywhere locally soluble. The implicit function theorem  implies that the fibre over any sufficiently close real point $(x_0:x_1) \in \PP^1(\RR)$ to $(y_0:y_1)$ has a real point. So there exists $\delta > 0$ such that if 
$|x_1/x_0 - y_1/y_0| < \delta$, then the fibre over $(x_0:x_1)$ has a real point. We choose $\delta$ sufficiently small so that $L_\theta(x_0,x_1)>0$.

By the $p$-adic implicit function theorem, a similar conclusion applies for primes $p \in S$. Therefore, shrinking $\delta$ if necessary, for all $p \in S$ and all $(x_0,x_1) \in \ZZ^2$, if $|x_1/x_0 - y_1/y_0|_p < \delta$, then the fibre over $(x_0:x_1)$ has a $\QQ_p$-point.

\subsubsection{Conclusion}

Putting everything together and passing to the affine cone, we obtain the following.
\begin{lemma} \label{lem:conclusion}
	We have
	$$
	N_{\mathrm{loc}}(\pi,B) \geq \frac{1}{2}  \sum_{\substack{(x_0,x_1) \in \ZZ^2 \\ |x_0|,|x_1| \leq B  \\ \gcd(x_0,x_1) = 1   
	 \\ |x_1/x_0 - y_1/y_0|_v < \delta \, \forall v \in S \cup \{\infty\} }} 
	 \prod_{\theta \in \Theta(\pi)} \varpi_\theta(L_\theta(x_0,x_1)).
	$$
\end{lemma}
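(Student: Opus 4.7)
The plan is to interpret each summand on the right-hand side as detecting a point of $\PP^1(\QQ)$ that is everywhere locally in the image of $\pi$. Since $\varpi_\theta$ is $\{0,1\}$-valued, each factor in the product is $0$ or $1$, and the product is $1$ precisely when $\varpi_\theta(L_\theta(x_0,x_1)) = 1$ for every $\theta \in \Theta(\pi)$. So the right-hand side counts (with the factor $1/2$) primitive pairs $(x_0,x_1) \in \ZZ^2$ with $|x_0|,|x_1| \leq B$, $|x_1/x_0 - y_1/y_0|_v < \delta$ for all $v \in S \cup \{\infty\}$, and $\varpi_\theta(L_\theta(x_0,x_1)) = 1$ for all $\theta$.

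First I would check that every such $(x_0,x_1)$ yields a rational point $(x_0:x_1) \in \PP^1(\QQ)$ which lies in $\pi(V(\Adele_\QQ))$. For primes $p \notin S$, this is immediate from Lemma \ref{lem:detector}. For $v \in S \cup \{\infty\}$, it follows from the $v$-adic implicit function theorem discussion in \S\ref{sec:small_primes}: the closeness condition $|x_1/x_0 - y_1/y_0|_v < \delta$ forces the fibre $\pi^{-1}(x_0:x_1)$ to have a $\QQ_v$-point, since $\pi^{-1}(y_0:y_1)$ is smooth and has a $\QQ_v$-point there. Combining over all places, $(x_0:x_1)$ is indeed in $\pi(V(\Adele_\QQ))$.

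Next I would bound the naive height: since $\gcd(x_0,x_1)=1$, the vector $(x_0,x_1)$ is primitive and $H(x_0:x_1) = \max(|x_0|,|x_1|) \leq B$. Thus each primitive pair counted by the sum contributes to $N_{\mathrm{loc}}(\pi,B)$.

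Finally, the factor $1/2$ accounts for the fact that the projectivisation map $\ZZ^2_{\mathrm{prim}} \to \PP^1(\QQ)$ is exactly two-to-one, sending $(x_0,x_1)$ and $(-x_0,-x_1)$ to the same point, and this identification respects all the summation conditions (absolute values, $v$-adic distances, the frobenian detectors on $|L_\theta|$, and the coprimality condition). Hence the sum on the right-hand side is at most $2 N_{\mathrm{loc}}(\pi,B)$, which gives the claimed inequality. There is essentially no obstacle here, as all the hard work has already been encoded in Lemma \ref{lem:detector} and the choice of $\delta$ in \S\ref{sec:small_primes}; this lemma is purely the bookkeeping step that packages these inputs into a sum to which Theorem \ref{thm:frob} can subsequently be applied.
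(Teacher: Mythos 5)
Your proposal matches the paper's (implicit) argument: the paper presents Lemma~\ref{lem:conclusion} as a direct consequence of Lemma~\ref{lem:detector} for primes outside $S$ and the implicit-function-theorem discussion in \S\ref{sec:small_primes} for $v \in S \cup \{\infty\}$, exactly as you do. One small imprecision: you say the two-to-one map $(x_0,x_1)\mapsto(x_0:x_1)$ "respects the frobenian detectors on $|L_\theta|$," but the summand is $\varpi_\theta(L_\theta(x_0,x_1))$ with no absolute value, and $\varpi_\theta$ is extended by $0$ on non-positive integers. Since $L_\theta(-x_0,-x_1)=-L_\theta(x_0,x_1)$, at most one of the two primitive representatives of a given projective point contributes a non-zero summand, so the right-hand sum is in fact at most $N_{\mathrm{loc}}(\pi,B)$ (not $2N_{\mathrm{loc}}(\pi,B)$); the factor $\tfrac12$ is harmless but not needed for the reason you state. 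This does not affect the validity of the inequality.
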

The sum in Lemma \ref{lem:conclusion} is non-zero, as the term $(x_0,x_1) =(y_0,y_1)$ contributes non-trivially. Indeed, it clearly occurs in the range of summation. Moreover, we have $L_\theta(\by) > 0$ and  $p \mid L_\theta(\by) \implies p \in S$; but $\varpi_\theta(p) = 1$ for all $p \in S$. Combining these facts shows that the summand is non-zero in this case.

We have thus reduced to a problem on sums of frobenian multiplicative
functions evaluated at binary linear forms. Theorem \ref{thm:frob} does not immediately apply
due to the coprimality condition and the imposed local conditions. 
As it will cause us no additional difficulties, we proceed by obtaining a general technical result on handling the kind of sums 
appearing in Lemma \ref{lem:conclusion}. We also give a higher-dimensional version  to assist with later applications.

\begin{theorem} \label{thm:frob_applications}
	Let $L_1(\bx),\ldots, L_r(\bx) \in \ZZ[x_0,\ldots,x_n]$  be pairwise linearly independent linear forms.
	Let $\rho_1,\ldots, \rho_r$ be real-valued non-negative frobenian multiplicative functions
	which are completely multiplicative and satisfy $m(\rho_i) \neq 0$.
	Let $S$ be a finite set of primes and $1 >\delta > 0$.
	Assume that there exists a primitive integer vector $\by \in \ZZ^{n+1}$ such that $\rho_j(L_j(\by)) > 0$ for 	all $ j \in \{1, \dots, r\}$.
	Then there exists $C_{\delta,S,\boldsymbol{\rho}, \boldsymbol{L}}>0$ such that as $B \to \infty$
	$$\sum_{\substack{\bx \in \ZZ^{n+1}  \\ \max_i |x_i| \leq B \\ \gcd(\bx) = 1 \\
\max\limits_{v \in S \cup \{\infty\}}|x_i/x_0 - y_i/y_0|_v < \delta}} 
	 \prod_{j=1}^r \rho_j(L_j(\bx)) \sim C_{\delta,S,\boldsymbol{\rho}, \boldsymbol{L}}B^{n+1}\prod_{j=1}^r(\log B)^{m(\rho_j)-1}.$$
\end{theorem}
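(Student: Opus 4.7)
The plan is to reduce Theorem~\ref{thm:frob_applications} to Theorem~\ref{thm:frob} via M\"obius inversion, to handle the primitivity constraint, combined with a residue-class decomposition, to handle the local conditions at primes in $S$. First, the archimedean conditions $\max_i|x_i|\le B$ and $|x_i/x_0-y_i/y_0|<\delta$ jointly cut out a convex region of the form $B\fK$, where $\fK\subset[-1,1]^{n+1}$ is convex and of positive measure (it contains an open neighbourhood of the direction of $\by$).

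Next, for each $p\in S$ I choose $N_p$ with $p^{-N_p}<\delta$ and set $M=\prod_{p\in S}p^{N_p}$. After permuting coordinates to arrange $\gcd(y_0,M)=1$, which is possible because $\by$ is primitive, the non-archimedean condition $|x_i/x_0-y_i/y_0|_p<\delta$ restricted to $\bx$ primitive at $p$ is equivalent to $\bx\equiv\lambda\by\pmod{p^{N_p}}$ for some $\lambda\in(\ZZ/p^{N_p}\ZZ)^*$. Combining these via CRT produces a set $\mathcal{R}_S\subset(\ZZ/M\ZZ)^{n+1}$, invariant under scaling by $(\ZZ/M\ZZ)^*$, such that the combined local-plus-primitivity-at-$S$ conditions read $\bx\bmod M\in\mathcal{R}_S$. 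For primitivity at primes $p\notin S$ I apply M\"obius inversion $\prod_{p\notin S}\1_{p\nmid\gcd(\bx)}=\sum_{\gcd(d,M)=1}\mu(d)\1_{d\mid\bx}$ and substitute $\bx=d\bx'$. Using complete multiplicativity of the $\rho_j$ together with homogeneity of the $L_j$ (so that $\rho_j(L_j(d\bx'))=\rho_j(d)\rho_j(L_j(\bx'))$) and the scale-invariance of $\mathcal{R}_S$, the sum rearranges as
\[
\sum_{\substack{d\ge 1\\\gcd(d,M)=1}}\mu(d)\prod_{j=1}^{r}\rho_j(d)\cdot T(B/d),\qquad T(B')=\sum_{\substack{\bx\in B'\fK\cap\ZZ^{n+1}\\\bx\bmod M\,\in\,\mathcal{R}_S}}\prod_{j=1}^{r}\rho_j(L_j(\bx)).
\]
I then decompose $T(B')$ by individual residue classes $\ba\in\mathcal{R}_S$; substituting $\bx=\ba+M\bx''$ in each class reduces it to a sum of $\prod_j\rho_j(ML_j(\bx'')+L_j(\ba))$ over $\bx''$ in a rescaled convex region, to which Theorem~\ref{thm:frob} applies directly since the non-constant parts $ML_j$ remain pairwise linearly independent. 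Thus $T(B')=(C_T+o(1))(B')^{n+1}\prod_j(\log B')^{m(\rho_j)-1}$ for some $C_T\ge 0$, and a standard truncation of the $d$-sum at $d\le(\log B)^A$ with tail controlled by $|\rho_j(d)|\ll_\varepsilon d^\varepsilon$ assembles the desired asymptotic.

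The hardest step is establishing strict positivity of the leading constant, because the M\"obius-weighted $d$-sum introduces cancellation. My strategy is to isolate the contribution of the residue class $\ba=\by\bmod M$, which lies in $\mathcal{R}_S$ by construction: by Lemma~\ref{lem:lifting} (Lifting property), the hypothesis $\rho_j(L_j(\by))>0$, and the explicit formula in Remark~\ref{rem:leading-constant}, this single class contributes a strictly positive main term to $T(B')$. For the $d$-sum, which rearranges formally as the Euler product $\prod_{p\notin S}\bigl(1-\prod_{j}\rho_j(p)/p^{n+1}\bigr)$, each factor is positive for all sufficiently large $p$ since the $\rho_j(p)$ are uniformly bounded and $n+1\ge 2$; the finitely many remaining factors are handled by an alternative direct lower-bound argument, in which one restricts to $\bx=\by+M'\bz$ for a sufficiently large auxiliary modulus $M'$ divisible by all potentially problematic small primes (so that primitivity modulo $M'$ becomes automatic from primitivity of $\by$), and then applies Theorem~\ref{thm:frob} to the shifted sum to match the claimed order of magnitude from below.
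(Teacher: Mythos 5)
Your overall strategy (M\"obius inversion to strip the primitivity condition, followed by a residue-class decomposition to reduce to Theorem~\ref{thm:frob}, with positivity extracted from a single well-chosen residue class) is the same as the paper's. Two points deserve attention, one of which is a genuine gap.

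\textbf{The reduction of the $p$-adic conditions to congruences is not justified as stated.} You claim that after permuting coordinates one can assume $\gcd(y_0,M)=1$ because $\by$ is primitive. This is false: primitivity only guarantees that for each prime $p\in S$ some coordinate $y_{i(p)}$ is a unit mod $p$, but the index $i(p)$ may vary with $p$ (e.g.\ $\by=(2,3,6)$, $S=\{2,3\}$). The paper instead proves an unnamed lemma that converts the system $\gcd(\bx,S)=1$ and $|x_i/x_0-y_i/y_0|_p<\delta$ $(p\in S)$ into a union of congruence classes mod $M$ by a direct case analysis: when $p\nmid x_0$ the condition becomes $x_i\equiv(y_i/y_0)x_0\Mod{p^m}$, while when $p\mid x_0$ one must work with a coordinate $x_1$ coprime to $p$ and track the valuations $v_p(y_0^2/y_1)$ and $v_p(y_0)$ explicitly, which changes the exponent of the modulus. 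Your shortcut $\bx\equiv\lambda\by\Mod{p^{N_p}}$ with $\lambda\in(\ZZ/p^{N_p}\ZZ)^*$ is only correct when $\gcd(y_0,p)=1$; in the general case the set of admissible classes has a different description and its density is not simply $\phi(p^{N_p})/p^{N_p(n+1)}$. The consequence (that the conditions cut out a finite union of classes mod $M$) is still true and your remaining argument goes through, but the proof of this step needs to be supplied.

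\textbf{Your positivity argument is more roundabout than necessary.} You treat finitely many ``problematic'' small primes in the Euler product $\prod_{p}(1-\rho(p)/p^{n+1})$, where $\rho=\prod_j\rho_j$, by an auxiliary residue-class restriction. This is workable (the sum has non-negative terms, so a lower bound over any subsum suffices), but the paper avoids it entirely: since each $\rho_j$ is completely multiplicative and satisfies $\rho_j(n)\ll_\varepsilon n^\varepsilon$, one must have $|\rho_j(p)|\le1$ for every prime $p$ (otherwise $|\rho_j(p^k)|=|\rho_j(p)|^k$ grows exponentially and violates the $n^\varepsilon$ bound). Hence $|\rho(p)|\le1$ for all $p$, every Euler factor is $\ge1-1/p^{n+1}>0$, and the product converges to a positive limit with no exceptional primes to handle.
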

The result applies to the $\varpi_\theta$, as they are completely multiplicative.

\subsection{Proof of Theorem \ref{thm:frob_applications}}
Let $\rho = \prod_{j =1}^r \rho_j$ and let $N(B)$ be the sum appearing in Theorem \ref{thm:frob_applications}.

\subsubsection{M\"{o}bius inversion}
We first apply M\"{o}buis inversion. To simplify some later parts of the proof, we only do this to primes not in $S$. This gives
$$N(B) = 
\sum_{\substack{k \leq B \\ \gcd(k,S) = 1}} \mu(k) 
\sum_{\substack{\bx \in \ZZ^{n+1} \\ 
	\max_i |x_i| \leq B \\
	k \mid \gcd(\bx)  \\ 
	\gcd(\bx, S) = 1  \\ 
	\max\limits_{v \in S \cup \{\infty\}}|x_i/x_0 - y_i/y_0|_v < \delta}} 
	 \prod_{j=1}^r \rho_j(L_j(\bx)).
	 $$
Here we use the notation $\gcd(\bx, S) := \prod_{p \in S} \gcd(x_0,\dots,x_n,p)$. Using that $\rho$ is completely multiplicative and that the $L_i$ are homogeneous,
we  obtain
\begin{equation} \label{eqn:Mobius}
N(B)  = \sum_{\substack{k \leq B \\ \gcd(k,S) = 1}} \mu(k) \rho(k)
\sum_{\substack{\bx \in \ZZ^{n+1} \\ 
	\max_i |x_i| \leq B/k \\
	\gcd(\bx,S) = 1 \\ 
	\max\limits_{v \in S \cup \{\infty\}}|x_i/x_0 - y_i/y_0|_v < \delta}} 
	 \prod_{j=1}^r \rho_j(L_j(\bx)).
\end{equation}
As $\rho_j(n) \ll_\varepsilon n^{\varepsilon/2r}$, using $k \leq B$ we find that the inner sum above is
$$
\ll \sum_{\substack{(x_0,\ldots,x_n) \in \ZZ^{n+1}   \\ \max_i |x_i| \leq B/k }}  \prod_{j=1}^r \rho_j(L_j(\bx)) 
	 \ll_\varepsilon B^{\varepsilon/2} \sum_{\substack{(x_0,\ldots,x_n) \in \ZZ^{n+1} \\ \max_i |x_i| \leq B/k }}  1
	 \ll_\varepsilon B^{\varepsilon/2} \left(\frac{B}{k}\right)^{n+1}.
$$	 
This in particular shows that the contribution to \eqref{eqn:Mobius} from those $B^{\varepsilon} \leq k \leq B$ is $O_{\varepsilon}(B^{n+1+\varepsilon/2 - n\varepsilon}) = O(B^{n+1 - \varepsilon/2})$. This gives
\begin{equation} \label{eqn:MT}
	N(B)  = \sum_{\substack{k \leq B^\varepsilon \\ \gcd(k,S)= 1}} \mu(k) \rho(k) \hspace{-10pt}
\sum_{\substack{\bx \in \ZZ^{n+1} \\ 
	\max_i |x_i| \leq B/k \\
	\gcd(\bx,S) = 1 \\ 
	\max\limits_{v \in S \cup \{\infty\}}|x_i/x_0 - y_i/y_0|_v < \delta}} 
	\hspace{-10pt}
	 \prod_{j=1}^r \rho_j(L_j(\bx))
	 + O_\varepsilon(B^{n+1-\varepsilon/2}).
\end{equation}

\subsubsection{Removing the $p$-adic conditions}
We next deal with our $p$-adic conditions by rewriting them in terms of congruences.

\begin{lemma}
	There exists an integer $M$ and a subset $A \subseteq (\ZZ/M\ZZ)^{n+1}$
	such that
	$$\gcd(\bx,S) = 1 \mbox{ and } \, \forall i, \forall p \in S, |x_i/x_0 - y_i/y_0|_p < \delta \quad \iff \quad \bx \bmod M \in A.$$
\end{lemma}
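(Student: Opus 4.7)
The plan is to reduce the lemma to a prime-by-prime statement via the Chinese remainder theorem, and then exploit $p$-adic compactness. Since both conditions in the statement involve only primes in the finite set $S$, if for each $p \in S$ I can produce an integer $N_p \geq 1$ and a subset $A_p \subseteq (\ZZ/p^{N_p}\ZZ)^{n+1}$ such that
\[
\gcd(\bx,p)=1 \text{ and } |x_i/x_0 - y_i/y_0|_p < \delta \;\forall i \iff \bx \bmod p^{N_p} \in A_p,
\]
then setting $M = \prod_{p \in S} p^{N_p}$ and letting $A$ be the image of $\prod_p A_p$ under the CRT isomorphism $\ZZ/M\ZZ \cong \prod_{p \in S} \ZZ/p^{N_p}\ZZ$ yields the desired $M$ and $A$.

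To construct $N_p$ and $A_p$, consider the subset $C_p \subseteq \ZZ_p^{n+1}$ defined by the same two conditions interpreted $p$-adically. The claim is that $C_p$ is clopen in $\ZZ_p^{n+1}$. Granting this, the compactness of $\ZZ_p^{n+1}$ forces $C_p$ to be a finite union of balls $a + p^{N}\ZZ_p^{n+1}$; taking $N_p$ to be the largest exponent that appears and $A_p$ the corresponding finite set of residues gives the required data.

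To verify that $C_p$ is clopen: the coprimality condition cuts out the clopen complement of $p\ZZ_p^{n+1}$; the closeness conditions are open on the locus $\{x_0 \neq 0\}$ by continuity of $\bx \mapsto x_i/x_0$, and they are also closed because $|\cdot|_p$ takes values in the discrete set $\{0\} \cup p^{\ZZ}$, so $|\cdot|_p < \delta$ is equivalent to $|\cdot|_p \leq p^{-N}$ for a suitable integer $N$. The main obstacle, and the only subtle point, is ruling out that $C_p$ has limit points on $\{x_0 = 0\}$: if $\bx^{(k)} \in C_p$ converged to some $\bx$ with $x_0 = 0$, then the limit of a coprime primitive sequence is itself coprime, so some coordinate $x_j$ with $j \geq 1$ must remain a $p$-adic unit; but then $v_p(x_0^{(k)}) \to \infty$ while $v_p(x_j^{(k)})=0$ eventually, forcing $|x_j^{(k)}/x_0^{(k)}|_p \to \infty$ and hence $|x_j^{(k)}/x_0^{(k)} - y_j/y_0|_p \to \infty$, contradicting the bound $< \delta$. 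Thus $C_p$ is clopen, completing the proof.
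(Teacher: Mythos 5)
Your proof is correct, and it takes a genuinely different route from the paper. The paper's proof is an explicit computation: it case-splits on whether $p \nmid x_0$ or $p \mid x_0$ (taking $p \nmid x_1$ without loss of generality in the second case), and in each case manipulates the $p$-adic inequality directly to produce explicit congruences such as $x_i \equiv (y_i/y_0)x_0 \bmod p^m$ or $x_0 y_1 \equiv x_1 y_0 \bmod p^{m+s}$, with exponents depending on $\delta$ and the valuations of the $y_i$. By contrast, you prove the existence of the congruence conditions abstractly: you observe that the set $C_p \subset \ZZ_p^{n+1}$ cut out by the two conditions is clopen, hence a finite union of residue cosets by compactness of $\ZZ_p^{n+1}$, and then invoke CRT. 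Your approach is shorter and sidesteps the finicky case analysis entirely; the paper's approach, in exchange, produces the congruences explicitly (which might matter if one wanted to track the size of $M$, but is not needed for the lemma as stated). Both arguments use the hypothesis $y_0 \neq 0$; in yours it enters through the limit-point argument ruling out $x_0 = 0$, which is the one subtle step and which you handle correctly via the ultrametric inequality.
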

\begin{proof}
Let $p \in S$ and choose the largest $m \in \ZZ$ such that $\delta \leq p^{-m+1}$.
 The condition $\gcd(\bx,p) = 1$ is equivalent to $p \nmid x_i$, for some $i$.  We first assume that $p \nmid x_0$. Here $x_i/x_0 \in \ZZ_p$, thus also $y_i/y_0 \in \ZZ_p$ when $|x_i/x_0 - y_i/y_0|_p < 1$. Therefore in this case our condition is equivalent to the congruence
 $$x_i \equiv (y_i/y_0)x_0 \bmod p^m,$$
 as claimed.
Now consider the case $p \mid x_0$, so that without loss of generality $p \nmid x_1$. Then $x_0/x_1 \in \ZZ_p$. If $|x_0/x_1|_p \neq |y_0/y_1|_p$ then we have
$$|x_1/x_0 - y_1/y_0|_p = \max\{|x_1/x_0|_p, |y_1/y_0|_p\} \geq |x_1/x_0|_p \geq 1 > \delta,$$
thus we must have $|x_0/x_1|_p = |y_0/y_1|_p$ under our condition. 
 As $x_1 \in \ZZ_p^*$ we obtain
$$|x_1/x_0 - y_1/y_0|_p = |x_1y_0 - y_1x_0|_p/|x_0y_0|_p =  |x_1y_0 - y_1x_0|_p\cdot |y_1/y_0^2|_p.$$
Letting $s = v_p(y_0^2/y_1)$, we find that  $|x_1/x_0 - y_1/y_0|_p < \delta$ is equivalent to 
$$x_0y_1 \equiv x_1 y_0\bmod p^{m+s}.$$
Now let $j > 1$. Then we have
$$|x_j/x_0 - y_j/y_0|_p = |(x_jy_1/x_1y_0)(1 + O(p^{m+s})) - y_j/y_0|_p.$$
Letting $r = v_p(y_0)$, using $p \nmid x_1$ and $r \leq s$, we find that $|x_j/x_0 - y_j/y_0|_p < \delta$ is equivalent to the congruence
$$x_jy_1 \equiv y_jx_1 \bmod p^{m+r}.$$
This handles all cases and proves the result for all $p \in S$. One then deduces the result from the Chinese remainder theorem.
\end{proof}

Using this lemma in \eqref{eqn:MT} we therefore obtain the main term
\begin{align*}
& \sum_{\substack{k \leq B^\varepsilon \\ \gcd(k,S)= 1}} \mu(k) \rho(k)
\sum_{\substack{\bx \in \ZZ^{n+1} \\ 
	\max_i |x_i| \leq B/k \\
	\bx \bmod M \in A \\ 
	\max\limits_{i}|x_i/x_0 - y_i/y_0| < \delta}} 
	 \prod_{j=1}^r \rho_j(L_j(\bx)) \\
	& = \sum_{\ba \in A }\sum_{\substack{k \leq B^\varepsilon \\ \gcd(k,S)= 1}} \mu(k) \rho(k) \hspace{-30pt}
\sum_{\substack{\bx \in \ZZ^{n+1} \\ 
	\max_i |x_i + a_i/M| \leq B/Mk \\
	\max\limits_{i}|(x_i + a_i/M)/(x_0 +a_0/M) - y_i/y_0| < \delta}} 
	\hspace{-20pt}
	 \prod_{j=1}^r \rho_j(ML_j(\xx) +  L_j(\boldsymbol{a})) 	 
\end{align*}
after summing over the elements of $A$ and making the obvious change of variables. Here we make the abuse of notation of identifying each element of $\ZZ/M\ZZ$ with its representative in $[0,M-1] \cap \ZZ$.

\subsubsection{Applying Theorem \ref{thm:frob}}
We now apply Theorem \ref{thm:frob} to the above sum with
$$\mathfrak{K}= 
\{ \xx \in \RR^{n+1} : |x_i| \leq 1, |x_i/x_0 - y_i/y_0|<\delta \},$$
with $\boldsymbol{a}$ replaced by $(-a_0/M,\dots,-a_n/M)$, and as $B/Mk \to \infty$.
The non-constant parts $ML_j(\xx)$ of the linear polynomials  are still pairwise linearly independent.
Moreover, as $L_j(\by) > 0$ we have $ML_j(\by) + L_j(\ba) > ML_j(\boldsymbol{0}) + L_j(\boldsymbol{a})$.
Thus, all assumptions of Theorem \ref{thm:frob} are satisfied. Since 
$\mathfrak{K}$ and the linear forms are independent of $k$, and $B/Mk \to \infty$ as $B \to \infty$ for $k \leq B^{\varepsilon}$,  we obtain
$$N(B) \sim  \sum_{\ba \in A} \sum_{\substack{k \leq B^\varepsilon \\ \gcd(k,M)=1}} \mu(k)\rho(k) C_{\mathfrak{K},\mathbf{a},M} \frac{B^{n+1}}{(Mk)^{n+1}} \prod_{j=1}^r\left(\log \frac{B}{Mk}\right)^{m(\rho_j)-1},$$
for some constant $C_{\mathfrak{K},\mathbf{a},M} \geq 0$ which is non-zero for $\ba \equiv \by \bmod M$, as follows from our assumption that
$\prod_{j=1}^r \rho_j(L_j(\by)) > 0$. Expanding out gives
$$\prod_{j=1}^r\left(\log \frac{B}{Mk}\right)^{m(\rho_j)-1} = \left(1 + O(\log k / \log B)\right)  \prod_{j=1}^r\left(\log B \right)^{m(\rho_j)-1}.$$
The resulting error term here is a satisfactory since the sum $\sum_{k} |\mu(k)|\rho(k)\log k /k^{n+1}$ is convergent. As for the main term, the leading constant is given by
$$
\sum_{\ba \in A} \frac{C_{\mathfrak{K},\mathbf{a},M}}{M^{n+1}}  \sum_{\substack{k =1 \\ \gcd(k,M)=1}}^\infty \frac{\mu(k)\rho(k)}{k^{n+1}}.
$$
To show positivity of the leading constant in Theorem \ref{thm:frob_applications} it suffices to note that
$$\sum_{\substack{k =1 \\ \gcd(k,M) = 1}}^\infty \frac{\mu(k)\rho(k)}{k^{n+1}} = \prod_{\gcd(p,M)=1}\left(1 - \frac{\rho(p)}{p^{n+1}}\right)$$
is positive. Indeed, this Euler product is absolutely convergent so it suffices to show each Euler factor is non-zero. But it is easily checked that $\rho$ being completely multiplicative and Definition \ref{def:class_F} 
implies that $|\rho(p)| \leq 1$ for all primes $p$, as otherwise
this would contradict $\rho(n) \ll_{\varepsilon} n^{\varepsilon}$.
This proves Theorem \ref{thm:frob_applications}.
\qed

\medskip \noindent
Theorem \ref{thm:Serre} now follows from Lemma \ref{lem:varpi}, Lemma \ref{lem:conclusion} and Theorem \ref{thm:frob_applications}.

\begin{remark}
	The proof of Theorem \ref{thm:Serre} shows the following stronger statement. For any $y \in \PP^1(\QQ)$ with $\pi^{-1}(y)$  smooth and everywhere locally soluble, any finite set of places $S$ and any open neighbourhoods $y \in U_p \subset \PP^1(\QQ_p)$, we have
	$$
	\#\{x \in \PP^1(\QQ): H(x) \leq B, x \in \pi(V(\Adele_\QQ)), x \in U_p \, \forall p \in S\} \asymp \frac{B^{n+1}}{(\log B)^{\Delta(\pi)}}.
	$$
	This stronger statement is useful for applications,
	and can be viewed as a version of weak approximation.
	We will require this for the proof of Theorem \ref{thm:RP}.
\end{remark}

\begin{remark} \label{rem:const_BM}
	Let us now give an example of a global obstruction  to the positivity of the leading constant in Theorem \ref{thm:frob} (cf.~Remark \ref{rem:const}).
	
	Let $V$ be a smooth projective variety over $\QQ$ with a morphism 
	$\pi:V \to \PP^1$ whose generic fibre is rationally connected and 
	such that each non-split fibre lies over a rational point. Assume that
	$V$ fails the Hasse principle, but each smooth fibre of $\pi$
	satisfies the Hasse principle. 
	(See \cite[Prop.~7.1]{CTCS80} for an explicit example coming
	from a Brauer--Manin obstruction.)
	
	Let $S$ be a finite set of places of $\QQ$.
	The argument  in \S\ref{sec:small_primes} applies in this case,
	under our weaker assumption that only $V(\Adele_\QQ) \neq \emptyset$,
	and shows that there is a smooth fibre which is soluble at all places 
	in $S$. Taking $S$ sufficiently large and choosing such a point
	$y \in \PP^1(\QQ)$, analogously to Lemma \ref{lem:conclusion} we have 	
$$
	N_{\mathrm{loc}}(\pi,B) \geq \frac{1}{2}  \sum_{\substack{(x_0,x_1) \in \ZZ^2 \\ |x_0|,|x_1| \leq B  \\ \gcd(x_0,x_1) = 1   
	 \\ |x_1/x_0 - y_1/y_0|_v < \delta \, \forall v \in S }} 
	 \prod_{\theta \in \Theta(\pi)} \varpi_\theta(L_\theta(x_0,x_1)).
	$$
There is no local obstruction here to the vanishing of the leading constant, in the following sense: Recall that the frobenian multiplicative functions $\varpi_\theta$ satisfy $\varpi_\theta(p) = 1$ for all $p\in S$, so there is clearly no obstruction for such $p$. For $p \notin S$, providing $S$ is sufficiently large, there exists $\bx \in \ZZ^2$ such that $p \nmid \prod_{\theta \in \Theta(\pi)} L_\theta(x_0,x_1)$. Thus the $p$-adic component of $\prod_{\theta \in \Theta(\pi)} L_\theta(x_0,x_1)$ is just a unit, hence the $p$-adic part of  $\varpi_\theta$ equals $1$ in this case as well.
	
	But the leading constant in Theorem \ref{thm:frob_applications} must be zero here; indeed $\pi$ has no smooth everywhere locally soluble fibre, since otherwise this fibre would have a rational point, which contradicts that $V$ has no rational point.	
	Thus here there is no local obstruction to the vanishing of the leading constant in Theorem \ref{thm:frob}, but there is a global obstruction coming from a failure of the Hasse principle. 
		These observations show that, in general, there is no simple local-global principle, nor a simple condition involving a finite set of places $S$, for the positivity of the leading constant in Theorem \ref{thm:frob}.
	The crucial assumption in Theorem \ref{thm:Serre} that there is an everywhere locally soluble smooth fibre is required to show the positivity of the leading constant in our application of Theorem \ref{thm:frob_applications}. 
	
	The above construction uses the fact that $\varpi_\theta(p) = 1$ for all $p\in S$.
	Comparing with the expression from Remark \ref{rem:leading-constant} for the leading constant in Theorem \ref{thm:frob_applications},
	one might be tempted to think that the factor $C^*_{\boldsymbol{\rho}, \boldsymbol{L}}$ 
	can be forced to be positive by a condition of the form $\rho_j(p) = 1$ for all $p \in S$ and 
	$1 \leq j \leq r$, if $S$ is sufficiently large to include all primes $p \leq B_0$. 
	In this case, the asymptotic formula stated in Remark \ref{rem:leading-constant} would imply a 
	local-global principle.
	
	We point out that this line of reasoning does not apply to the situation above. 
	In fact, the linear polynomials $ML_j(\xx) +  L_j(\boldsymbol{a})$ that we apply Theorem \ref{thm:frob_applications} to depend on $M$, and therefore on the set $S$.
	Thus, the parameter $B_0$ in Remark \ref{rem:leading-constant} needs to be sufficiently large in terms of not only $H$, $r$ and $\boldsymbol{L}$, 
	but also in terms of $S$ in order to be able to decide positivity of the leading constant.
	Hence, the factor $C^*_{\boldsymbol{\varpi}, M\boldsymbol{L}+\boldsymbol{a}}$ involves primes outside 
	of $S$ at which the functions $\varpi_{\theta}$ are not trivially equal to $1$.
\end{remark}

\section{Controlling failures of the Hasse principle} \label{sec:RP}
In this section we prove Theorem \ref{thm:RP}. We first recall some facts about Brauer groups and the Brauer--Manin obstruction.

\subsection{The Brauer group} \label{sec:Brauer_basic}
Let $V$ be a regular integral Noetherian scheme. 
\subsubsection{Residues}
We define the (cohomological) Brauer group of $V$ to be
$\Br V = \HH^2(V,\Gm)$. A theorem of Grothendieck \cite[Prop.~6.6.7]{Poo17} states that the natural map 
$
	\Br V \to \Br \kappa(V)
$
is injective, where $\kappa(V)$ denotes the function field of $V$.
This in particular shows that $\Br V$ is a torsion group, so that 
\begin{equation} \label{eqn:ell}
	\Br V = \bigoplus_{\text{primes } \ell}\Br V\{\ell\}.
\end{equation}
Let $D \in V^{(1)}$. If $\ell$ is a prime which is invertible on $V$,
then there is a residue map $\res_D: \Br \kappa(V)\{\ell\} \to \HH^1( \kappa(D), \QQ_\ell/\ZZ_\ell)$.
Using \eqref{eqn:ell}, one defines the residue $\res_D(b) \in \HH^1( \kappa(D), \QQ/\ZZ)$ of any element
$b \in \Br \kappa(V)$ whose order is invertible on $V$. We say that $b$ is 
\emph{unramified} at $D \in V^{(1)}$ if $\res_D (b) = 0$.
The residue maps give rise (see \cite[\S 6.8]{Poo17} for details) to an exact sequence
\begin{equation} \label{seq:purity}
	0 \to \Br V\{\ell\} \to \Br \kappa(V)\{\ell\} \to \bigoplus_{D \in V^{(1)}} \HH^1( \kappa(D), \QQ_\ell/\ZZ_\ell).
\end{equation}

\subsubsection{Brauer--Severi schemes} \label{sec:BS}
To any Brauer--Severi scheme $\pi: B \to V$ one may associate a Brauer group element $\alpha \in \Br V$. This construction is such that $\pi$ has a section if and only if the class of $\alpha$ is trivial in $\Br V$. In particular, for $P \in V$, we have $\alpha(P) = 0$ if and only if $\pi^{-1}(P)$ has a $\kappa(P)$-rational point.

\subsubsection{Filtration}
If $V$ is defined over a field $k$, then we define the algebraic part  of the Brauer group of $V$ to be
$ \Br_1 V = \ker( \Br V \to \Br V_{\bar{k}})$
. The map $\Br k \to \Br V$ need not be injective in general, however it is injective
if $V(k) \neq \emptyset$.
An element of $\Br V$ which does not lie in $\Br_1 V$ is called \emph{transcendental}.

\subsubsection{The Brauer--Manin obstruction}
We recall some facts about the Brauer--Manin obstruction (see e.g.~\cite[\S8.2]{Poo17}).
We have the fundamental exact sequence
\begin{equation} \label{eqn:CFT}
	0 \to \Br \QQ \to \bigoplus_{v} \Br \QQ_v \to \QQ/\ZZ \to 0,
\end{equation}
where the direct sum is over the places $v$ of $\QQ$. The last map is given by the sum over all local invariants $\inv_v : \Br \QQ_v \to \QQ/\ZZ$.
Given a smooth variety $V$ over $\QQ$, there is a well-defined pairing
$$\Br V \times V(\Adele_\QQ) \to \QQ/\ZZ,\quad
(\alpha, (P_v)) \mapsto \sum_v \inv_v \alpha(P_v)$$
which is right continuous and trivial on the image of $V(\QQ)$. We denote the right kernel of a subset $\mathcal{A} \subset \Br V$ by $V(\Adele_\QQ)^\mathcal{A}$; note that $V(\QQ) \subset V(\Adele_\QQ)^\mathcal{A}$ by the fundamental exact sequence. For $\mathcal{A} = \Br V$ we simply write $V(\Adele_\QQ)^{\Br}$.

We say that the \emph{Brauer--Manin obstruction is the only obstruction to the Hasse principle for $V$} if the implication $V(\Adele_\QQ)^{\Br} \neq \emptyset \implies V(\QQ) \neq \emptyset$ holds.

\subsection{The result}

We prove the following generalisation of Theorem \ref{thm:RP}. 
\begin{theorem} \label{thm:RP2}
	Let $V$ be a smooth projective variety over $\QQ$ equipped with a morphism $\pi:V \to \PP^1$
	whose generic fibre is geometrically integral.
	Assume that each fibre of $\pi$ contains an irreducible component of multiplicity $1$ and that each
	non-split fibre of $\pi$ lies over a rational point.
	Assume also that
	$\HH^1(V_{\bar{\eta}}, \QQ/\ZZ) = \HH^2( V_{\bar{\eta}}, \OO_{V_{\bar{\eta}}}) = 0$,
	and that the Brauer--Manin obstruction
	is the only one to the Hasse principle for the smooth fibres of $\pi$. 
	If $V(\QQ) \neq \emptyset$ then
	$$N(\pi,B) \gg \frac{B^2}{(\log B)^{\omega(\pi)}},
	\quad \text{ for some } \omega(\pi)>0.$$
\end{theorem}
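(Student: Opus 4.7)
The plan is to build on the proof of Theorem \ref{thm:Serre} by introducing additional frobenian multiplicative detectors that enforce vanishing of the Brauer--Manin obstruction on the smooth fibres. Since the Brauer--Manin obstruction is, by assumption, the only obstruction to the Hasse principle for the smooth fibres, such a fibre $V_t$ acquires a rational point once (i) it is everywhere locally soluble, and (ii) the Brauer--Manin pairing is trivial at some adelic point. The detectors $\varpi_\theta$ from Section \ref{sec:detectors} already encode (i); the task is to encode (ii) in frobenian data.

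The first step is to isolate the relevant Brauer classes. The hypotheses $\HH^1(V_{\bar\eta},\QQ/\ZZ) = \HH^2(V_{\bar\eta}, \OO_{V_{\bar\eta}}) = 0$ force $\Br V_{\bar\eta}$ to be finite, so after enlarging the bad-prime set $S$ the Brauer--Manin obstruction on any smooth fibre $V_t$ is controlled, modulo constants, by a finitely generated algebraic ``vertical'' subgroup $\br_{\mathrm{vert}} \subset \Br V_\eta$ whose elements are unramified outside $\Theta(\pi)$. By purity, each $\alpha \in \br_{\mathrm{vert}}$ is determined up to constants by its residues $\partial_D \alpha \in \HH^1(\kappa(D), \QQ/\ZZ)$ for $D \in \Theta(\pi)$. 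For a prime $p$ such that $(x_0:x_1)\bmod p$ specialises to $D$, the local invariant $\inv_p \alpha(x_0:x_1)$ is the value of $\partial_D\alpha$ on the Frobenius at $p$ in the cyclic extension of $\kappa(D)$ cut out by the residue, so the indicator function of primes realising any prescribed invariant is frobenian. Then, following the Harpaz--Wittenberg fibration method \cite{HW16}, I would impose local conditions at $S$ and at a finite list of auxiliary ``switchable'' primes so that the global reciprocity \eqref{eqn:CFT} forces $\sum_v \inv_v \alpha(P_v) = 0$ for every $\alpha \in \br_{\mathrm{vert}}$ simultaneously. Combining these conditions with the $\varpi_\theta$ yields a product of frobenian multiplicative functions evaluated at the linear forms $L_\theta$, to which Theorem \ref{thm:frob_applications} applies.

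Positivity of the leading constant follows from $V(\QQ) \neq \emptyset$: after a standard reduction this provides a smooth fibre over a rational point $t_0$ with trivial Brauer--Manin obstruction, and restricting the count to a small adelic neighbourhood of $t_0$ (in the spirit of the weak-approximation-style refinement of Theorem \ref{thm:Serre} recorded in the remark at the end of Section \ref{sec:Serre}) exhibits a starting integer vector on which the combined detector is strictly positive. The main obstacle will be the combinatorial bookkeeping in the Harpaz--Wittenberg step: the local conditions must be arranged so that the global reciprocity obstruction is killed for every $\alpha \in \br_{\mathrm{vert}}$ without over-constraining the count. This delicate balance determines the exponent $\omega(\pi)$, which will in general be strictly larger than $\Delta(\pi)$ since each extra Brauer condition on a switchable prime costs a further factor of $\log B$. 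This is also the point where the stronger hypothesis that every \emph{non-split} fibre lies over a rational point is essential: at a pseudo-split but non-split fibre the residue $\partial_D \alpha$ need not vanish when restricted to the multiplicity-one component, so the associated Brauer contribution cannot be absorbed into the generic-prime analysis and must instead be pinned down by finitely many local conditions, which is only possible when $D$ itself is rational.
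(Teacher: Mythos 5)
Your high-level outline has the right ingredients (finiteness of the relevant Brauer group, Harpaz--Wittenberg switching, weak-approximation-style positivity), but the central mechanism you propose does not work, and it differs essentially from what the paper does.

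The key error is the identification of the classes controlling the Brauer--Manin obstruction on the fibres with a ``vertical'' subgroup $\br_{\mathrm{vert}}\subset\Br V_\eta$, i.e.\ with classes pulled back from $\Br\kappa(\eta)$ and hence having residues $\partial_D\alpha\in \HH^1(\kappa(D),\QQ/\ZZ)$ along $D\in\Theta(\pi)\subset(\PP^1)^{(1)}$. Under the hypotheses $\HH^1(V_{\bar\eta},\QQ/\ZZ)=\HH^2(V_{\bar\eta},\OO_{V_{\bar\eta}})=0$, what is finite is $\Br V_\eta/\Br\kappa(\eta)$ (by \cite[Lem.~8.6]{HW16}); the classes that matter are representatives $\alpha$ of this quotient, and these are genuinely \emph{non-vertical}. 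For such $\alpha$ and a smooth fibre $V_x$, the invariant $\inv_p\alpha(P_p)$ depends on the choice of local point $P_p\in V_x(\QQ_p)$, not only on $x\bmod p$, so there is no arithmetic function of $\xx=(x_0,x_1)$ whose vanishing or not records whether the Brauer--Manin pairing is trivial. Consequently one cannot ``encode (ii) in frobenian data'' as you propose: the reciprocity condition is a condition on adelic points, not on $x$, and no additional multiplicative detectors evaluated at $L_\theta(\xx)$ can impose it.

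The paper's proof avoids this by replacing the fibration $\pi$ with the composite $\pi\circ\psi$ where $\psi\colon Y\to V$ is (a smooth compactification of) the fibre product of Brauer--Severi schemes $Y_\alpha\to V_U$ over the representatives $\alpha\in\mathcal{A}$. Then for any $x$ the existence of a $\QQ_p$-point of $Y_x$ automatically produces $Q_p\in V_x(\QQ_p)$ with $\alpha(Q_p)=0$ for all $\alpha$; the Brauer constraint becomes a local-solubility statement, and Theorem~\ref{thm:frob_applications} is applied to the detectors of the non-split fibres of $\pi\circ\psi$ that still lie over rational points. The new non-split fibres of $\pi\circ\psi$ over non-rational closed points are not handled by detectors at all; they are handled by the finitely many auxiliary ``switching'' primes $p_i$ and \cite[Lem.~9.20]{HW16}, together with a thin-set argument (Serre's theorem) to ensure that $\mathcal{A}$ generates $\Br V_x/\Br\QQ$ for most $x$. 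Your remark that $\omega(\pi)>\Delta(\pi)$ ``since each extra Brauer condition on a switchable prime costs a further factor of $\log B$'' is also off: the switching primes are finitely many and fixed, costing only a bounded factor; the actual admissible exponent in the paper is $\Delta(\pi\circ\psi)$ (Remark~\ref{rem:admissible}), which exceeds $\Delta(\pi)$ only because $Y\to\PP^1$ may have additional non-split fibres.
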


\begin{proof}
Our approach combines the method of proof of Theorem \ref{thm:Serre} with the techniques from the proof of \cite[Thm.~9.17]{HW16}, as well as some conceptual improvements on \emph{loc.~cit.} due to Colliot-Th\'el\`ene (cf.~the proof of \cite[Thm.~7.13]{CT15}).

Let $\mathcal{A} \subset \Br V_\eta$ be a set of representative of the elements of $\Br V_\eta/ \Br \kappa(\eta)$. 
This is finite by our assumptions and \cite[Lem.~8.6]{HW16}. Choose some dense open set $U \subset \PP^1$ such that $V_U:= V \times_U \PP^1$ is smooth and each element of $\mathcal{A}$ is defined on $V_U$. Choose  Brauer--Severi schemes $\psi_\alpha : Y_\alpha \to V_U$ representing each $\alpha \in \mathcal{A}$. We let $\psi: Y \to V$ be a  smooth projective compactification of the fibre product $\prod_\alpha Y_\alpha$ over $V_U$. 

For motivation, let us briefly explain how we \emph{would like} the proof to go. Let $P \in V_U(\QQ)$. As $\alpha(P) \in \Br \QQ$, we may change our choice of representative for $\alpha$ to assume that $\alpha(P) = 0$ for all $\alpha \in \mathcal{A}$. As explained in \S \ref{sec:BS}, this implies that $P \in \psi_\alpha(Y_\alpha(\QQ))$ for all $\alpha$, hence $P \in \psi(Y(\QQ))$ and so $Y_U(\QQ) \neq \emptyset$. We now apply Theorem \ref{thm:Serre} to  $\pi \circ \psi: Y \to \PP^1$, which gives the stated order of magnitude $x \in \PP^1(\QQ)$ such that $Y_x(\Adele_\QQ) \neq \emptyset$, with $\omega(\pi) = \Delta(\pi \circ \psi)$. For such $x$ we have $V_x(\Adele)^{\mathcal{A}} \neq \emptyset$ by the construction of $Y$; however for almost all $x$ the group $\mathcal{A}$ generates $\Br V_x/\Br \QQ$, thus $V_x(\Adele_\QQ)^{\Br} \neq \emptyset$ and so $V_x(\QQ) \neq \emptyset$ by our assumptions, as required.

The problem with this argument is that $\pi \circ \psi$ may \emph{not} satisfy the assumptions of Theorem \ref{thm:Serre}: despite the non-split fibres of $\pi$ lying over rational points, there may be new non-split fibres of $\pi \circ \psi$ which don't lie over rational points. We thus need to re-run the proof of Theorem \ref{thm:Serre}, paying careful attention to the new non-split fibres. This subtlety also arises in the proof of \cite[Thm.~9.17]{HW16}, and the method to deal with it  originated in work of Harari \cite[Lem.~4.1.1]{Har94}. This is quite a delicate argument that requires us to introduce more notation and work with a larger set of Brauer elements than $\mathcal{A}$. We have modified this approach to our setting, which manages to avoid the use of Harari's ``formal  lemma''.

Let $S'$ be a sufficiently large set of places.
Let $\theta_1,\dots, \theta_n \in \PP^1(\QQ)$ be the points below the non-split fibres of $\pi$ and $L_i$ the corresponding primitive binary linear forms. We let $\varpi_{\theta_i}$ be the frobenian multiplicative function obtained by applying the construction from \S\ref{sec:detectors} to the fibre of $\pi \circ \psi$ above $\theta_i$.  Let $\theta_{n+1},\dots,\theta_N$ denote those closed points of $\PP^1$ below the new non-split fibres of $\pi \circ \psi$. Let $k_i$ be the residue field of $\theta_i$ and $K_i/k_i$ the splitting field of the irreducible components of $Y_{\theta_i}$. Choose sufficiently large distinct primes $p_i \notin S'$ which are completely split in  $K_i$. Let $\Gamma_i \subset \Br U$ be a finite subgroup such that the image of the residue map at $\theta_i$

$$\res_{\theta_i}: \Gamma_i \to \HH^1(k_i, \QQ/\ZZ)$$
contains $\HH^1(K_i/k_i, \QQ/\ZZ)$; this exists by   assumption (9.9) from \cite[Thm.~9.17]{HW16}, which holds in our case as at least one of the $\theta_i$ is rational (see \cite[Rem.~9.18(ii)]{HW16}). We let $\Gamma = \sum_{i=n+1}^N \Gamma_i$ and set $\mathcal{A}' = \mathcal{A} \cup \pi^* \Gamma$. Then, as above, we choose  Brauer--Severi schemes $\psi_\alpha' : Y_\alpha' \to V_U$ representing each $\alpha \in \mathcal{A}'$ and let $\psi': Y' \to V$ be a  smooth projective compactification of the fibre product of the $\psi_\alpha'$.

We have assumed the existence of a rational point in $V(\QQ)$. To get the proof to work, we need to choose this point carefully. By  \cite[Thm.~9.28]{HW16} and our assumptions, the variety $V$ satisfies weak weak approximation. Namely, choosing $S'$ sufficiently large, the set $V(\QQ)$ is dense in $\prod_{p \notin S'}V(\QQ_p)$. Moreover, as $p_i$ is completely split in $K_i$ and sufficiently large, we have $V_{\theta_i}(\QQ_{p_i}) \neq \emptyset$. Thus there exists a rational point $P \in V_U(\QQ)$ such that $y = \pi(P)$ is arbitrarily close to $\theta_i$ with respect to $p_i$, for each $i$. This is our choice of rational point, which we fix.

As $P$ lies in $V_U$, the evaluation $\alpha(P) \in \Br \QQ$ of each $\alpha \in \mathcal{A}'$ is well-defined. We  may change our choices of representatives $\alpha$ by an element of $\Br \QQ$ if we wish. So without loss of generality, we may assume that $\alpha(P) = 0$ for all $\alpha \in \mathcal{A}'$. Then  $\alpha(P) = 0$ implies that $P \in \psi'_\alpha(Y'_\alpha(\QQ))$. It follows that $P \in \psi'(Y'(\QQ))$. 

For $x \in \PP^1(\QQ)$, we let $\Omega_{x,i} = \{p \notin S' : x \bmod p \in \theta_i \bmod p\}$.
We now apply Theorem \ref{thm:frob_applications} to $\varpi_1(L_1(\bx)),\dots, \varpi_n(L_n(\bx))$ with $S = S' \cup \{p_{n+1},\dots, p_N\}$ and the chosen $y$. As in the proof of Theorem \ref{thm:Serre}, for all sufficiently small $\delta$ we obtain
\begin{equation}
	\#\{x \in U(\QQ): H(x) \leq B, x \text{ satisfies \eqref{eqn:Harari}}\}
	\gg B^2\prod_{i=1}^n(\log B)^{m(\varpi_i)-1} \label{eqn:HW}
\end{equation}
where
\begin{equation} \label{eqn:Harari}
	|x_1/x_0 - y_1/y_0|_v < \delta \,\, \forall v \in S \cup \{\infty\}, \quad Y'_x(\QQ_p) \neq \emptyset \,\, \forall p \notin \Omega_{x,n+1},\dots, \Omega_{x,N}.
\end{equation}
Let us clarify that we are only applying the method for $i =1,\dots,n$, so we do not claim local solubility at the primes in $\Omega_{x,n+1},\dots, \Omega_{x,N}$.
The leading constant in Theorem \ref{thm:frob_applications} is non-zero in this case, due to the existence of $y$.

Fix now $x$ satisfying \eqref{eqn:Harari}. For $p \notin \Omega_{x,n+1},\dots, \Omega_{x,N}$, we have $Y'_x(\QQ_p) \neq \emptyset$ by \eqref{eqn:Harari}. As explained in \S \ref{sec:BS}, it follows that the image $Q_p \in V_x(\QQ_p)$ of such a point satisfies $\alpha(Q_p) = 0$ for all $\alpha \in \mathcal{A}'$. In particular, for each $\alpha \in \mathcal{A}'$ we have
\begin{equation} \label{eqn:sum_0}
	\sum_{p \notin \Omega_{x,n+1},\dots, \Omega_{x,N}} \inv_p\alpha (Q_p)
	= \sum_{p \notin \Omega_{x,n+1},\dots, \Omega_{x,N}} 0 = 0.
\end{equation}
We now construct $p$-adic points $Q_p$ for the remaining primes $p$, to find an adele which is orthogonal to each $\alpha \in \mathcal{A}'$.

Fix $i=n+1,\dots, N$. First note that $p_i \in \Omega_{x,i}$ by our choice of $P$. For $p \in \Omega_{x,i}$, the fibre $V_x \bmod p$ is split. Thus by the Lang--Weil estimates, providing $S'$ is sufficiently large, there is a smooth $\FF_p$-point of $V_x$ which we can lift using Hensel's lemma to obtain a $\QQ_p$-point $Q_p$ of $V_x$. We do this for all $p \in \Omega\setminus\{p_i\}$. For $p_i$, we apply \cite[Lem.~9.20]{HW16} and the resulting arguments (cf.~\cite[(9.13)]{HW16} -- this uses the assumption that $p_i$ is completely split in $K_i$ and is the key step of Harari's trick). This yields the existence of a $\QQ_{p_i}$-point $Q_{p_i}$ such that
$$\inv_{p_i} \alpha(Q_{p_i}) = - \sum_{p \in \Omega_{x,i}\setminus \{ p_i\}} \inv_{p} \alpha(Q_p)$$
for all $\alpha \in \mathcal{A}'$.
Applying this to each $i$ and recalling \eqref{eqn:sum_0}, we find an adelic point $(Q_p) \in V_x(\Adele_\QQ)$ whose sum over all local invariants is trivial for each $\alpha$.
So for any $x$ satisfying \eqref{eqn:Harari}, we have  $V_x(\Adele_\QQ)^{\mathcal{A}'} \neq \emptyset$; in particular $V_x(\Adele_\QQ)^{\mathcal{A}} \neq \emptyset$.

However by \cite[Prop.~4.1]{HW16}, our  assumptions imply that $\Br V_\eta / \Br \kappa(\eta) \to \Br V_x / \Br \QQ$ is an isomorphism outside a thin set of $x \in \PP^1(\QQ)$. But, by a theorem of Serre  \cite[\S9.7]{Ser97}, only $O(B)$ of rational points in $\PP^1(\QQ)$ of height at most $B$ lie in any given thin set; thus \eqref{eqn:HW} still holds when restricted to  $x$ with the property that the image of $\mathcal{A}$ in $\Br V_x$ generates $\Br V_x / \Br \QQ$. For such $x$ we therefore have $V_x(\Adele_\QQ)^{\Br} \neq \emptyset$. But, by assumption, the Brauer--Manin obstruction is the only one to the Hasse principle for $V_x$, so $V_x(\QQ) \neq \emptyset$. This completes the proof.
\end{proof}

Theorem \ref{thm:RP} now follows immediately from Theorem \ref{thm:RP2}.

\begin{remark} \label{rem:admissible}
	The proof of Theorem \ref{thm:RP2} 
	shows that an admissible value of the exponent 
	$\omega(\pi)$ of $(\log B)^{-1}$ in the lower bound is
	$\Delta(\pi \circ \psi')$. In fact the proof gives exactly the value
	$\Delta(\pi \circ \psi)$ when
	$\pi \circ \psi$ has no new non-split fibres, i.e.~when
	$$
		Y_x \mbox{ split} \quad \iff \quad V_x \mbox{ split},
	\quad
	\mbox{for all closed points }x \in \PP^1.
	$$
\end{remark}

\section{Detector functions for general pencils} \label{sec:general}
We generalise our detector functions from \S\ref{sec:detectors} to general fibrations over $\PP^1$, i.e.~if there is a non-split fibre over a non-rational closed point.

\subsection{Set-up}
Let $V$ be a smooth projective variety over $\QQ$ equipped with a morphism $\pi:V \to \PP^1$ 	whose generic fibre is geometrically integral, such that each fibre of $\pi$ contains an irreducible component of multiplicity $1$. We assume that the fibre over some $y \in \PP^1(\QQ)$  is smooth and everywhere locally soluble.

Let $\Theta(\pi)$ be the set of closed points of $\PP^1$ which lie below the non-pseudo-split fibers of $\pi$. We let $U = \PP^1 \setminus \{\theta: \theta \in \Theta(\pi)\}$. Let $f_\theta(x_0,x_1) \in \ZZ[x_0,x_1]$ be a primitive binary  form whose zero locus in $\PP^1$ is $\theta$.
Let $S$ be a finite set of primes such that there exists a smooth proper scheme $\mathcal{V} \to \Spec \ZZ_S$
whose generic fibre is isomorphic to $V$, together with a morphism $\pi: \mathcal{V} \to \PP^1_{\ZZ_S}$ which extends the map 
$V \to \PP^1_{\QQ}$. We choose $S$ sufficiently large so that the fibre outside each $\theta \bmod p$ is pseudo-split and that the $\theta \bmod p$ are disjoint in $\PP^1_{\FF_p}$. ($\theta \bmod p$ is a collection of closed points of $\PP^1_{\FF_p}$ in general.)

\subsection{A negative result}
Analogously to Lemma \ref{lem:detector}, one might expect the existence of frobenian multiplicative functions $\varpi_\theta$ of mean $\delta_\theta(\pi)$ such that if $\prod_\theta \varpi_\theta(f_\theta(x_0,x_1)) = 1$, then $\pi^{-1}(x)$ has a $\QQ_p$-point for all sufficiently large primes $p$. In certain cases this holds.

\begin{example} \label{ex:too_simple!}
Consider the conic bundle surfaces
$$x^2 - ay^2 = f(t)$$
where $f$ is separable of even degree and $a \in \ZZ$ a non-square.
Then for $p \nmid 2a$, the condition that a fibre over $(t:1)$ with $p \| f(t)$ has a $\QQ_p$-point is that $a \in \QQ_p^{*2}$; a purely frobenian condition over $\QQ$.
\end{example}

This simple example is misleading; in general there are no such arithmetic functions, even for conic bundle surfaces 
(the next surface is a quartic del Pezzo).

\begin{lemma} \label{lem:DP4}
	Let $\pi:V \to \PP^1$ be the conic bundle surface given by a smooth compactification
	of
	$$x^2 - t y^2 = (t^2 -2 )z^2 \quad \subset \PP^2 \times \mathbb{A}^1.$$
	There is \textbf{no} arithmetic function $\varpi: \NN \to \{0,1\}$
	and \textbf{no} finite set of primes $S$ 
	with the following property: Let $p \notin S$ and $(t_0,t_1)$
	a primitive integer vector such that
	$p \| (t_0^2-2t_1^2)$. Then
	$\varpi(t_0^2-2t_1^2) = 1$ if and only if
	$\pi^{-1}(t_0:t_1)$ has a $\QQ_p$-point.
\end{lemma}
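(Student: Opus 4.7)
The plan is to falsify the existence of such $(\varpi,S)$ by producing, for any finite set $S$, two distinct primitive integer vectors $(t_0,t_1),(t_0',t_1') \in \ZZ^2$ with a common value $n := t_0^2 - 2t_1^2 = {t_0'}^2 - 2{t_1'}^2$ and a prime $p \notin S$ with $p \| n$, whose fibres $\pi^{-1}(t_0:t_1)$ and $\pi^{-1}(t_0':t_1')$ have \emph{opposite} local solubility at $p$. Any $\varpi$ satisfying the stated biconditional would then be forced into contradictory values at the single input $n$.

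First, I would rewrite the fibre above $(t_0:t_1)$ (with $t_1 \neq 0$) in the integral form $C_{t_0,t_1}\colon X^2 - t_0t_1\, Y^2 = (t_0^2 - 2t_1^2)\, Z^2$ and extract the relevant local condition. For odd $p$ with $p \| (t_0^2 - 2t_1^2)$, primitivity of $(t_0,t_1)$ automatically forces $p \nmid t_0 t_1$ (since $p \mid t_0$ would propagate to $p \mid t_1$). A standard Hilbert symbol computation then gives $(t_0 t_1,\, t_0^2 - 2t_1^2)_p = \bigl(\tfrac{t_0 t_1}{p}\bigr)$, so $C_{t_0,t_1}(\QQ_p) \neq \emptyset$ if and only if $t_0 t_1$ is a quadratic residue modulo $p$.

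The decisive observation is that the involution $(t_0,t_1) \mapsto (-t_0,t_1)$ preserves primitivity and the value of $n$, but multiplies the Legendre symbol $\bigl(\tfrac{t_0 t_1}{p}\bigr)$ by $\bigl(\tfrac{-1}{p}\bigr)$. I would therefore select a prime $p \notin S$ for which $2$ is a square modulo $p$ (so that primitive vectors with $p \mid t_0^2 - 2t_1^2$ exist at all) while $-1$ is not; these two conditions combine precisely to $p \equiv 7 \pmod{8}$, and Dirichlet's theorem supplies infinitely many such $p$. For such $p$, take $t_0 \in \ZZ$ to be any integer lift of a square root of $2$ modulo $p$, adjusted by a multiple of $p$ if necessary so that $v_p(t_0^2 - 2) = 1$ (which can always be achieved since replacing $t_0$ by $t_0 + p$ changes $t_0^2 - 2$ by $2 p t_0 + p^2$, an element of exact valuation $1$). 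The two pairs $(t_0,1)$ and $(-t_0,1)$ are then the desired counterexample; an explicit instance is $p = 7$, $t_0 = 3$, giving $n = 7$ and Legendre symbols $\bigl(\tfrac{3}{7}\bigr) = -1$ versus $\bigl(\tfrac{-3}{7}\bigr) = 1$.

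I do not foresee any genuine obstacle; the argument reduces to the routine Hilbert symbol identity together with Dirichlet. The one substantive insight is the symmetry $(t_0,t_1) \mapsto (-t_0,t_1)$, which reveals that knowing $n = t_0^2 - 2t_1^2$ alone is insufficient to determine which of the two square roots of $2$ modulo $p$ the point $(t_0:t_1)$ is approaching, whereas local solubility of the fibre genuinely depends on this additional information.
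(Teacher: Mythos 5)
Your proof is correct and follows essentially the same strategy as the paper: restrict to primes $p \equiv 7 \pmod 8$ so that $2$ is a square modulo $p$ but $-1$ is not, and exploit the involution that negates $t_0 t_1$ (you flip $t_0$, the paper flips $t_1$) to produce two primitive pairs sharing the value $n = t_0^2 - 2t_1^2$ with opposite local solubility detected by the Legendre symbol $\bigl(\tfrac{t_0 t_1}{p}\bigr)$. The one small difference is that the paper invokes the classical representability of such $p$ as $t_0^2 - 2t_1^2$, whereas you take $t_1 = 1$ and adjust a lift of $\sqrt{2}\bmod p$ to force $v_p(t_0^2-2)=1$, which is a slightly more elementary and self-contained way of producing the pair.
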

\begin{proof}
	Assume there exists $\varpi$ and $S$ as in the statement.
	Consider $\theta:t^2 - 2 = 0 \in \mathbb{A}^1_{\QQ}$.
	Let $p \equiv 7 \bmod 8$ with $p \notin S$.
	As $p \equiv 7 \bmod 8$ we have $2 \in \FF_p^{*2}$,
	so let $\alpha \in \FF_p$ be such that $\alpha^2 = 2$.
	Then we have $\theta \bmod p = (t- \alpha)(t + \alpha)$,
	and the fibres over these points are
	\begin{equation} \label{eqn:alpha}
		x^2 - \alpha y^2 = 0,\quad x^2 + \alpha y^2 = 0,
	\end{equation}
	respectively.	
	But 
	$
		\left(\frac{\alpha}{p}\right) \neq 
		\left(\frac{-\alpha}{p}\right)
	$
	since $-1 \notin \FF_p^{*2}$. Thus exactly one of $\pm \alpha$
	is in $\FF_p^{*2}$. Then
	\eqref{eqn:alpha} shows that the fibre over exactly one of 
	$t = \pm \alpha$ is split over $\FF_p$.

	Now let $t_0,t_1 \in \ZZ$ be such that $t_0^2 - 2t_1^ 2= p$
	(these exist by a classical theorem).
	Then $(t_0: \pm t_1) \bmod p$ are the points of
	$\theta \bmod p$, thus the fibre over exactly one is split
	(say the fibre over $(t_0:t_1) \bmod p$).
	A Hilbert symbol calculation shows that the fibre over
	$(t_0:t_1)$ has a $\QQ_p$-point, but the fibre over $(t_0:-t_1)$
	has no $\QQ_p$-point.
	However, by our assumptions on $\varpi$, we find that both 
	$\varpi(p)=\varpi(t_0^2 - 2t_1^2)  = 1$ and 
	$\varpi(p)=\varpi(t_0^2 - 2(-t_1)^2)  = 0$, which is a contradiction.	
\end{proof}

The problem above is the following: the condition $p \mid f_\theta(x)$ means that $x \bmod p \in \theta \bmod p$. But we don't know which closed point it corresponds to! The fibre over this closed point may or may not be split.

\subsection{The detector functions} \label{sec:ind_general}
One needs to work over the number field determined by $f_\theta$. Our choices are inspired by the constructions from \cite{BS16, HW16}. For simplicity of exposition, we assume that the fibre at infinity is smooth.

For each $\theta \in \Theta(\pi)$,  let $k_\theta = \QQ[x]/(f_\theta(x,1))$ and let  $\alpha_\theta$ denote the image of $x$ in $k_\theta$. Let $k_\theta \subset K_\theta$ be a finite Galois extension which contains the field of definition of every geometric irreducible component of $\pi^{-1}(\theta)$ and let $\Gamma_\theta = \Gal(K_\theta/k_\theta)$. We assume that $S$ contains all primes which ramify in the $K_\theta$. We identify the prime ideals of $k_\theta$ above $p$ with the irreducible factors of $f_\theta \bmod p$; in particular, we view these as closed points of $\PP^1_{\FF_p}$. We let
$$
	\mathcal{P}_\theta = \{ \fp \in S \} \cup \left\{ \fp \notin S : 
	\begin{array}{l}
		\Frob_\fp \in \Gamma_\theta \mbox{ fixes an irreducible component} \\
		\mbox{of $\pi^{-1}(\theta)$ of multiplicity $1$}
	\end{array}
	\right\}.
$$
Here $\fp$ is a non-zero prime ideal of the ring of integers of $k_\theta$. We abuse notation, and  write $\fp \in S$ if $\fp$  lies above a rational prime in $S$.
For $\theta \in \Theta(\pi)$ we define a completely multiplicative function $\varpi_\theta$ on the ideals of $k_\theta$ via
$$\varpi_\theta(\mathfrak{n}) =
\begin{cases}
	1, & \forall \, \mathfrak p \mid \mathfrak n \text{ we have } \mathfrak p \in \mathcal{P}_\theta \\
	0, & \text{otherwise}.
\end{cases}$$

The theory of frobenian (multiplicative) functions makes sense over any number field \cite[\S3.3]{Ser12}, and one immediately obtains the following.
\begin{lemma}
	Each $\varpi_\theta$ is a frobenian multiplicative function on the ideals of $k_\theta$ of mean $\delta_\theta(\pi)$.
\end{lemma}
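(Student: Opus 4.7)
The plan is to verify the defining properties of a frobenian multiplicative function (working over the number field $k_\theta$ instead of $\QQ$, using the version of \cite[\S3.3]{Ser12}) directly from the construction of $\varpi_\theta$. Since $\varpi_\theta$ is $\{0,1\}$-valued by construction, the two growth bounds in Definition \ref{def:e-weak_frobenian} (items (2) and (3)) hold trivially for every $\varepsilon > 0$ with $H = 1$, and complete multiplicativity is immediate from the prescription that $\varpi_\theta(\mathfrak{n}) = 1$ if and only if every prime ideal dividing $\mathfrak{n}$ lies in $\mathcal{P}_\theta$.

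It therefore suffices to check that the restriction of $\varpi_\theta$ to prime ideals of $k_\theta$ is a frobenian function of mean $\delta_\theta(\pi)$. First I would take the Galois data: the finite Galois extension $K_\theta / k_\theta$ with group $\Gamma_\theta$ and the finite set of primes $S$ (enlarged to contain all ramified primes and all primes of $k_\theta$ above rational primes in $S$). For $\fp \notin S$, $\Frob_\fp \in \Gamma_\theta$ is well-defined up to conjugacy, and the condition ``$\Frob_\fp$ fixes some irreducible component of $\pi^{-1}(\theta)_{\overline{k_\theta}}$ of multiplicity $1$'' depends only on the conjugacy class of $\Frob_\fp$; this is because the set of geometric irreducible components of multiplicity $1$ is $\Gamma_\theta$-stable, so the condition on $\gamma \in \Gamma_\theta$ of fixing such a component is itself a union of conjugacy classes. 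Let $\varphi_\theta : \Gamma_\theta \to \{0,1\}$ be the indicator class function of this subset; then $\varpi_\theta(\fp) = \varphi_\theta(\Frob_\fp)$ for all $\fp \notin S$, so $\varpi_\theta$ is frobenian in the sense of the number field analogue of Definition \ref{def:frob}.

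The mean is computed directly as
\[
m(\varpi_\theta) = \frac{1}{|\Gamma_\theta|} \sum_{\gamma \in \Gamma_\theta} \varphi_\theta(\gamma) = \frac{\#\{\gamma \in \Gamma_\theta : \gamma \text{ fixes a component of multiplicity } 1\}}{|\Gamma_\theta|} = \delta_\theta(\pi),
\]
where the last equality is the definition of $\delta_\theta(\pi)$ from \S\ref{sec:detectors} (transcribed from Definition \ref{def:Delta}). This finishes the verification.

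There is no real obstacle; the lemma is a bookkeeping statement analogous to Lemma \ref{lem:varpi}, the only subtlety being that $\varpi_\theta$ is defined on ideals of $k_\theta$ rather than on $\NN$, so one must invoke the number field version of the theory of frobenian functions (which is standard, see \cite[\S3.3]{Ser12}) and note that all properties in Definition \ref{def:e-weak_frobenian} have direct analogues via the absolute norm $\mathrm{N}(\mathfrak{n})$ in place of $n$.
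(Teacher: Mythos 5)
Your proof is correct and takes essentially the same approach as the paper, which simply asserts that the lemma "immediately" follows (as with its $\QQ$-analogue, Lemma~\ref{lem:varpi}); you have merely filled in the bookkeeping. The one point worth spelling out, which you handle correctly, is why the indicator of $\{\gamma \in \Gamma_\theta : \gamma \text{ fixes a multiplicity-}1 \text{ component}\}$ is a class function: the set of geometric irreducible components of $\pi^{-1}(\theta)$ of multiplicity $1$ is $\Gamma_\theta$-stable, and if $\gamma$ fixes a component $C$ then $\sigma\gamma\sigma^{-1}$ fixes $\sigma C$, which has the same multiplicity.
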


We now have the following generalisation of Lemma \ref{lem:detector}.

\begin{lemma} \label{lem:detector_general}
	On enlarging $S$ if necessary, the following holds. Let $(x_0,x_1) \in \ZZ^2$ be such that $\gcd(x_0,x_1)=1$.
	If $\prod_{\theta \in \Theta(\pi)} \varpi_\theta(x_0 - \alpha_\theta x_1) = 1$
	then $\pi^{-1}(x_0:x_1)$ has a $\QQ_p$-point for all $p \notin S$.
\end{lemma}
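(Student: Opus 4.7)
The plan is to mirror the proof of Lemma~\ref{lem:detector}: enlarging $S$ if necessary, I reduce to showing that the fibre $\pi^{-1}((x_0:x_1) \bmod p)$ is split over $\FF_p$, after which the Lang--Weil estimates and Hensel's lemma produce a $\QQ_p$-point exactly as before. Since the norm $N_{k_\theta/\QQ}(x_0 - \alpha_\theta x_1)$ equals $\pm f_\theta(x_0,x_1)$, there are two cases to consider: either $(x_0:x_1) \bmod p$ avoids every $\theta \bmod p$, which happens precisely when $p$ divides none of these norms, or it lies on some such $\theta \bmod p$.

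The first case is handled just as in Lemma~\ref{lem:detector}: then $(x_0:x_1) \bmod p$ lies in $\mathcal{U} \bmod p$, so the fibre above it is pseudo-split over $\FF_p$ by construction of $S$, and pseudo-split implies split since $\Gal(\overline{\FF_p}/\FF_p)$ is topologically generated by Frobenius (any geometric component fixed by Frobenius is fixed by the whole Galois group, as the action factors through a finite quotient). The second case is where the new mechanism enters. I pick $\theta \in \Theta(\pi)$ with $p \mid N_{k_\theta/\QQ}(x_0 - \alpha_\theta x_1)$, and a prime $\fp$ of $\mathcal{O}_{k_\theta}$ above $p$ dividing the ideal $(x_0 - \alpha_\theta x_1)$. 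The crucial observation is that $(x_0:x_1) \bmod p$ is an $\FF_p$-point of $\PP^1_{\FF_p}$ lying on $\theta \bmod p$, and hence must coincide with the closed point of $\theta \bmod p$ labelled by $\fp$; but an $\FF_p$-point cannot factor through a closed point whose residue field strictly contains $\FF_p$, so $\fp$ necessarily has residue degree~$1$ over $p$. The hypothesis $\prod_\theta \varpi_\theta(x_0 - \alpha_\theta x_1) = 1$ forces $\varpi_\theta((x_0 - \alpha_\theta x_1)) = 1$ and hence $\fp \in \mathcal{P}_\theta$, so $\Frob_\fp \in \Gamma_\theta$ fixes some geometric irreducible component of $\pi^{-1}(\theta)$ of multiplicity~$1$. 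Because $\fp$ has residue degree~$1$, a lift of $\Frob_\fp$ to $\Gamma_\theta$ topologically generates (after spreading out) the action of $\Gal(\overline{\FF_p}/\FF_p)$ on the geometric components of the fibre above $\fp$, and splitness of this fibre over $\FF_p$ follows.

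The main obstacle is the spreading-out implicit in the previous paragraph: one must enlarge $S$ so that the $K_\theta$-irreducible components of $\pi^{-1}(\theta)$ admit integral models whose reductions modulo primes of $K_\theta$ outside $S$ remain integral and preserve multiplicities, uniformly in $\theta$, so that the Galois action on the components of the reduced fibre is correctly identified with the $\Gamma_\theta$-action on the components of $\pi^{-1}(\theta)$ via the decomposition group at a prime above $\fp$. Since $\Theta(\pi)$ is finite, this absorbs only finitely many additional primes into $S$. Once the fibre is split over $\FF_p$, the Lang--Weil estimates (valid for $p$ outside a further finite set, which we also absorb into $S$) supply a smooth $\FF_p$-point, and Hensel's lemma lifts it to a $\QQ_p$-point as required.
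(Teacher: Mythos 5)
Your proof is correct and follows essentially the same route as the paper's: reduce to showing the fibre over $(x_0:x_1) \bmod p$ is split and then apply Lang--Weil and Hensel, with the second case handled by identifying $(x_0:x_1) \bmod p$ with a degree-$1$ prime $\fp$ of $k_\theta$ dividing $(x_0 - \alpha_\theta x_1)$ and using $\varpi_\theta = 1$ to conclude $\fp \in \mathcal{P}_\theta$. The paper is slightly more direct in that it writes $\fp = (x_0 - \alpha_\theta x_1, p)$ explicitly (so the residue degree $1$ is manifest), whereas you deduce this from the closed-point correspondence; your additional remarks on spreading out and on degree $1$ forcing $\Frob_\fp$ to govern the $\FF_p$-Galois action make explicit steps that the paper leaves implicit, and are correct.
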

\begin{proof}
	Let $x_0,x_1$ be such that $\prod_{\theta \in \Theta(\pi)} \varpi_\theta(x_0 - \alpha_\theta x_1) = 1$ and $\gcd(x_0,x_1)=1$.
	Let $p \notin S$. We claim that $(x_0:x_1) \bmod p$ lies below
	a split fibre. 
	
	If $p \nmid \prod_{\theta \in \Theta(\pi)} f_\theta(x_0,x_1)$
	then $(x_0:x_1) \bmod p \notin \theta \bmod p$ for 
	all $\theta \in \Theta(\pi)$, thus the fibre is split.
	If $p \mid f_\theta(x_0,x_1)$ for some $\theta \in \Theta(\pi)$
	then $(x_0:x_1) \bmod p \in \theta \bmod p$,
	so $(x_0:x_1) \bmod p$ corresponds to the prime ideal $\fp =(x_0 - \alpha_\theta x_1,p)$ of $k_\theta$.
	As $\fp \mid (x_0 - \alpha_\theta x_1)$ and 
	$\varpi_\theta(x_0 - \alpha_\theta x_1) = 1$, we find that
	$\fp \in \mathcal{P}_\theta$.
	Hence $\Frob_\fp$ fixes an irreducible component of multiplicity
	one of the fibre, so the fibre over $(x_0:x_1) \bmod p$ is  split, as required.
	
	The result now follows from the Lang--Weil estimates and
	Hensel's lemma, on enlarging $S$ if necessary.
\end{proof}

One deals with the small primes and the real place exactly as in \S\ref{sec:small_primes}. We deduce the following.

\begin{corollary} \label{cor:ind_general}
	There exists a finite set of primes $S$ and $\delta > 0$ such that
	$$
	N_{\mathrm{loc}}(\pi,B) \geq \frac{1}{2}  \sum_{\substack{(x_0,x_1) \in \ZZ^2   \\ |x_0|,|x_1| \leq B  \\ \gcd(x_0,x_1) = 1 
	 \\  |x_1/x_0 - y_1/y_0|_v < \delta \, \forall v \in S \cup \{\infty\} }} 
	 \prod_{\theta \in \Theta(\pi)} \varpi_\theta(x_0 - \alpha_\theta x_1).
	$$
\end{corollary}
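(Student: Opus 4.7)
The plan is to mimic the proof of Lemma~\ref{lem:conclusion}, now using the generalised detector functions from \S\ref{sec:ind_general} and invoking Lemma~\ref{lem:detector_general} in place of Lemma~\ref{lem:detector}. The broad idea is to pass to the affine cone and then argue that every primitive $(x_0,x_1) \in \ZZ^2$ contributing to the right-hand sum produces a point of $\PP^1(\QQ)$ whose fibre is everywhere locally soluble, hence is counted by $N_\mathrm{loc}(\pi,B)$.

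First I would note that each rational point $(x_0:x_1) \in \PP^1(\QQ)$ of height at most $B$ is represented by exactly two primitive integer pairs $\pm(x_0,x_1)$ with $\max(|x_0|,|x_1|) \leq B$; this explains the factor of $\tfrac{1}{2}$. The condition $|x_1/x_0 - y_1/y_0|_v < \delta$ and the value of $\prod_\theta \varpi_\theta(x_0 - \alpha_\theta x_1)$ are both invariant under this sign change: the former because ratios are preserved, and the latter because $\varpi_\theta$ is defined on ideals of $k_\theta$ and $(-1)$ is a unit. Without loss of generality I assume $y_0 \neq 0$; otherwise one swaps affine charts on $\PP^1$ and runs the argument symmetrically.

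For every prime $p \notin S$, Lemma~\ref{lem:detector_general} yields $\pi^{-1}(x_0:x_1)(\QQ_p) \neq \emptyset$ as soon as $\gcd(x_0,x_1) = 1$ and $\prod_\theta \varpi_\theta(x_0 - \alpha_\theta x_1) = 1$, both of which are enforced by the summation conditions. For the finitely many remaining places $v \in S \cup \{\infty\}$, I would pick a $\QQ_v$-point $P_v$ on the smooth fibre $\pi^{-1}(y)$; such a point exists by the standing hypothesis that $\pi^{-1}(y)$ is smooth and everywhere locally soluble. Since $\pi$ is smooth at $P_v$, the $v$-adic implicit function theorem produces an open neighbourhood $U_v \ni y$ in $\PP^1(\QQ_v)$ contained in $\pi(V(\QQ_v))$. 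One then chooses a single $\delta > 0$ small enough that the $v$-adic ball $\{z \in \PP^1(\QQ_v) : |z_1/z_0 - y_1/y_0|_v < \delta\}$ lies in $U_v$ for every $v \in S \cup \{\infty\}$, which is possible as $S \cup \{\infty\}$ is finite.

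Combining these ingredients, every primitive $(x_0,x_1)$ in the range of summation yields $(x_0:x_1) \in \pi(V(\Adele_\QQ))$ with $H(x_0:x_1) \leq B$, and the claimed lower bound follows upon dividing by two. The argument is essentially a transcription of Lemma~\ref{lem:conclusion}; the main (minor) obstacle is bookkeeping, namely enlarging $S$ so that the uniform Lang--Weil/Hensel step built into Lemma~\ref{lem:detector_general} applies to all $(x_0,x_1)$ at once, and shrinking $\delta$ so that the finite-place neighbourhood conditions are satisfied simultaneously. No new analytic input is required beyond what \S\ref{sec:ind_general} already supplies.
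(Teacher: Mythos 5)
Your proposal is correct and mirrors the paper's implicit argument exactly: the paper simply remarks that ``one deals with the small primes and the real place exactly as in \S\ref{sec:small_primes}'' and then states the corollary, and your write-up is precisely this transcription of the proof of Lemma~\ref{lem:conclusion} using Lemma~\ref{lem:detector_general} for the places outside $S$, the $v$-adic implicit function theorem at $v \in S \cup \{\infty\}$, and the sign-invariance of the ideal $(x_0 - \alpha_\theta x_1)$ to justify the factor $\tfrac{1}{2}$.
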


Note that it is still linear forms which are used for the detector functions. But the linear forms $x_0 - \alpha_\theta x_1$ are defined over the larger field $k_\theta$, not over $\QQ$.

\begin{example}
	(1) We show that our detector functions recover the naive ones
	for the conic bundle surfaces
	$x^2 - ay^2 = f(t)z^2$
	from Example~\ref{ex:too_simple!}. For simplicity 
	assume that $f$ is irreducible. Let $\varpi_\theta$ be as in 
	\S \ref{sec:ind_general}. Explicitly, for almost all $\fp$
	$$\varpi_\theta(\fp) = 1 \quad \iff \quad \left(\frac{a}{\fp}\right)
	= 1$$
	where the Legendre symbol is over $k_\theta$. However, let $\varpi$
	be the naive detector function over $\QQ$, where for almost all $p$
	we have
	$$\varpi(p) = 1 \quad \iff \quad \left(\frac{a}{p}\right)
	= 1.$$	
	For \emph{prime ideals $\fp \mid p$ of degree $1$} we have
	$\varpi_\theta(\fp) = \varpi(p);$
	indeed, the  map $\ZZ \to \FF_\fp$ induces a canonical
	isomorphism $\FF_\fp \cong \FF_p$. As $a \in \ZZ$, our condition is
	independent of the choice of the prime ideal $\fp$
	of degree $1$. 
	
	So let $x_0,x_1$ be such that $\gcd(x_0,x_1) = 1$
	and let $F$ be the homogenisation of $f$.
	Note that $F(x_0,x_1) = \Norm_{k_\theta/\QQ}(x_0 - \alpha_\theta x_1)$.
	It easily follows that the ideal $(x_0 - \alpha_\theta x_1)$ may
	only be divisible by prime ideals of degree $1$, and that
	$p \mid F(x_0,x_1)$ if and only if $x_0 - \alpha_\theta x_1$ 
	is divisible by some prime
	ideal of degree $1$ over $p$. We conclude that
	$\varpi_\theta( x_0 - \alpha_\theta x_1) = \varpi(F(x_0,x_1)),$
	which recovers Example \ref{ex:too_simple!}.

	(2) We next compute our detector functions for the conic bundle surfaces from Lemma \ref{lem:DP4}.	The non-split fibres
	occur over the closed points $\theta_1 : t = 0$ and $\theta_2 : t^2 - 2 = 0$, with corresponding frobenian functions
	\begin{align*}
		\varpi_1(p) = 1 \iff  \left(\frac{-2}{p}\right) = 1,
		\quad
		\varpi_2(\fp) = 1 \iff  \left(\frac{\sqrt{2}}{\fp}\right)
		=1,
	\end{align*}
	for $\fp$ a prime of $\QQ(\sqrt{2})$. The 
	summand in Corollary \ref{cor:ind_general} is
	$\varpi_1(x_0)\varpi_2(x_0 - \sqrt{2}x_1)$.
	Note that if $p \equiv 7 \bmod 8$ and $p = \fp \fq$, then we have
	$$\left(\frac{\sqrt{2}}{\fp}\right) = 
	\left(\frac{-\sqrt{2}}{\fq}\right) = -
	\left(\frac{\sqrt{2}}{\fq}\right)$$
	Hence
	$\varpi_2(\fp) \neq \varpi_2(\fq)$ so here $\varpi_2$ is not constant
	on prime ideals of degree $1$ above $p$.
	This agrees with the behaviour observed
	in the proof of Lemma \ref{lem:DP4}.
\end{example}

For sums of multiplicative functions as in Corollary \ref{cor:ind_general}, one expects that the asymptotic behaviour is controlled by expressions of the form
$$\sum_{\Norm \mathfrak a \leq x} \frac{\mu^2(\fa) \varpi(\fa)}{\Norm \fa}$$
(see \cite{BS17} for upper bounds of this shape). The following is a minor variant of the results from \S \ref{sec:frob}, and agrees with the conjectural lower bound.

\begin{lemma} 
	$$\sum_{\Norm \mathfrak a \leq x} \frac{\mu^2(\fa) \varpi(\fa)}{\Norm \fa}
	\asymp (\log B)^{m(\varpi)}.$$
\end{lemma}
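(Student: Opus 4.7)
The plan is to mimic the proof of Lemma \ref{lem:sum-rho} in the number-field setting and then transfer the result via partial summation; modulo the convention $B = x$, the asymptotic we are after is of Selberg--Delange type. First I would form the Dirichlet series
\[
F(s) = \sum_{\fa} \frac{\mu^2(\fa)\varpi_\theta(\fa)}{\Norm(\fa)^s} = \prod_{\fp} \left(1 + \frac{\varpi_\theta(\fp)}{\Norm(\fp)^s}\right), \qquad \re s > 1.
\]
Since $\varpi_\theta$ is a frobenian multiplicative function on the ideals of $k_\theta$ of mean $m(\varpi_\theta) = \delta_\theta(\pi) > 0$, the same argument as in Proposition \ref{prop:frob_zeta}, applied in the number field setting by decomposing the associated class function as a sum of irreducible characters of the corresponding Galois group and invoking standard zero-free regions and growth estimates for Artin $L$-functions over $k_\theta$, produces a factorization
\[
F(s) = \zeta_{k_\theta}(s)^{m(\varpi_\theta)} G(s),
\]
where $G(s)$ is holomorphic and of polynomial growth in $|\im s|$ in a suitable region of the shape $\re s > 1 - c/\log(|\im s|+3)$.

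Next, since $\varpi_\theta$ is real-valued and non-negative, each local factor of the corresponding Euler product representation of $\lim_{s\to 1}(s-1)^{m(\varpi_\theta)}F(s)$ is strictly positive, and the product converges; hence this limit is strictly positive. Applying the Selberg--Delange method in the form of \cite[Thm.~II.5.3]{Ten15} (whose proof transfers verbatim to Dirichlet series over number fields, since it relies only on contour integration and the analytic properties established above) gives
\[
\sum_{\Norm \fa \leq x} \mu^2(\fa)\varpi_\theta(\fa) = c\, x (\log x)^{m(\varpi_\theta)-1} + O\!\bigl(x (\log x)^{m(\varpi_\theta)-2}\bigr),
\]
with some constant $c > 0$ depending on $\varpi_\theta$.

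Finally, a routine partial summation turns this into
\[
\sum_{\Norm \fa \leq x} \frac{\mu^2(\fa)\varpi_\theta(\fa)}{\Norm \fa} = \frac{c}{m(\varpi_\theta)} (\log x)^{m(\varpi_\theta)} + O\!\bigl((\log x)^{m(\varpi_\theta)-1}\bigr),
\]
which, using $m(\varpi_\theta) > 0$ and $c > 0$, yields the claimed $\asymp (\log x)^{m(\varpi_\theta)}$. There is no real obstacle in this argument; the only point requiring a moment's thought is the verification that the analytic machinery of Section \ref{sec:frob}, stated over $\QQ$, extends without modification to the number field $k_\theta$. This is standard, as Chebotarev, Artin $L$-function estimates, and the Selberg--Delange method are all available over any number field.
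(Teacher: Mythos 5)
The paper gives no proof of this lemma, remarking only that it is ``a minor variant of the results from \S\ref{sec:frob},'' and your argument is a correct and faithful filling-in of that remark: you transplant Proposition~\ref{prop:frob_zeta} and Lemma~\ref{lem:sum-rho} to the number field $k_\theta$ (factoring through $\zeta_{k_\theta}(s)^{m(\varpi_\theta)}$ and Selberg--Delange) and then finish by partial summation, exactly as the cross-reference suggests. Two small cosmetic points: in the display the paper writes $\log B$ where it plainly means $\log x$, as you note; and since the asymptotic you derive is in fact stronger than the stated $\asymp$, it would be worth flagging that you prove a genuine asymptotic with positive leading constant $c/m(\varpi_\theta)$, which of course implies the two-sided bound.
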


\section{Brauer groups} \label{sec:Brauer}
In this section we prove Theorem \ref{thm:Brauer}. We will use the various properties of Brauer groups recalled in \S \ref{sec:Brauer_basic}.

\subsection{Specialisations and ramification}
The following will be used to construct the detector functions in the proof of Theorem \ref{thm:Brauer}.

\begin{proposition} \label{prop:Brauer_integral}
	Let $Y$ be a smooth geometrically integral variety over a number field $k$
	and let $b \in \Br k(Y)$. Then there exists a finite set 
	of primes $S$ of $k$ together with a regular model $\mathcal{Y}$ for $Y$ over $\OO_{k,S}$ such that the following holds.
	
	Let $v \notin S$ and assume that $b \otimes k_v$ is unramified at all codimension $1$ points of $Y_{k_v}$.
	Then for all $y \in \mathcal{Y}(\OO_v)$ we have $b(y) = 0 \in \Br k_v$.
\end{proposition}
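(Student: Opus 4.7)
The plan is to construct a model $\mathcal{Y}$ and a set $S$ so that, under the stated hypothesis, the class $b$ extends to an element of $\Br \mathcal{Y}_v$ for every $v \notin S$, where $\mathcal{Y}_v := \mathcal{Y} \times_{\OO_{k,S}} \OO_v$. Once this extension is in place, any $y \in \mathcal{Y}(\OO_v)$ gives a morphism $\Spec \OO_v \to \mathcal{Y}_v$ along which $b$ pulls back to an element of $\Br \OO_v$, and this group vanishes because $\OO_v$ is a complete discrete valuation ring with finite residue field (via the residue sequence $0 \to \Br \OO_v \to \Br k_v \to \HH^1(\kappa(v), \QQ/\ZZ)$ together with local class field theory). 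The whole proof therefore reduces to producing this extension.

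First I would include in $S$ every place whose residue characteristic divides the order of $b$, so that all relevant residue maps are defined. Next I would spread $Y$ out to a smooth model $\mathcal{Y} \to \Spec \OO_{k,S}$ with geometrically integral fibres (possible after enlarging $S$); then $\mathcal{Y}$ and each $\mathcal{Y}_v$ are regular. Since $b \in \Br \kappa(\mathcal{Y})$ has, by purity, only finitely many ramification divisors in $\mathcal{Y}$, and only finitely many of these are vertical, I would then enlarge $S$ once more so as to remove precisely the places underneath these vertical divisors. Taking $\mathcal{U} \subset \mathcal{Y}$ to be the complement of the surviving (horizontal) ramification divisors of $b$, purity for the regular scheme $\mathcal{U}$ gives $b \in \Br \mathcal{U}$, and $\mathcal{U}$ meets each special fibre $\mathcal{Y}_{\kappa(v)}$ in a dense open.

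With these choices fixed, take any $v \notin S$ and assume $b \otimes k_v$ is unramified at every codimension $1$ point of $Y_{k_v}$. The codimension $1$ points of the regular scheme $\mathcal{Y}_v$ split into horizontal ones, which are in bijection with codimension $1$ points of the generic fibre $Y_{k_v}$, and a single vertical one $\xi_v$, namely the generic point of the geometrically integral special fibre. The residues at the horizontal points vanish by hypothesis, and the residue at $\xi_v$ vanishes because $\xi_v \in \mathcal{U}$ and $b \in \Br \mathcal{U}$. Applying the purity exact sequence \eqref{seq:purity} to $\mathcal{Y}_v$ then yields $b \in \Br \mathcal{Y}_v$, and the argument concludes as in the opening paragraph.

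The main technical nuance will be coordinating the several enlargements of $S$: one must simultaneously spread out the variety $Y$, the class $b$, and its locus of regularity, while ensuring that the special fibres $\mathcal{Y}_{\kappa(v)}$ remain geometrically integral (so that vertical codimension $1$ behaviour on $\mathcal{Y}_v$ collapses to a single residue) and that the corresponding generic points lie in $\mathcal{U}$. No individual step is deep, but the orchestration of the spreading-out is the substance of the argument.
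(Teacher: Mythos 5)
Your proof is correct and follows essentially the same route as the paper's: spread out to a regular model, enlarge $S$ to invert the order of $b$ and make $b$ defined on an open $\mathcal{U}$ surjecting onto the base, then for $v\notin S$ check that the residues of $b$ vanish at all codimension $1$ points of $\mathcal{Y}_{\OO_v}$ by splitting them into horizontal ones (where the hypothesis applies, via the identification $\kappa(\mathcal{Y}_{\OO_v})=\kappa(Y_{k_v})$ and the induced isomorphism of residue maps) and the unique vertical one (whose generic point lies in $\mathcal{U}$), and conclude by purity and $\Br\OO_v=0$. The only cosmetic difference is that the paper first reduces to $\ell$-primary torsion rather than putting all primes dividing the order of $b$ into $S$, but the effect is the same.
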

If $b$ is in fact unramified at all codimension $1$ points of $Y$ (so that $b \in \Br Y$ by \eqref{seq:purity}),
then it is well-known that for all but finitely
many places $v$ we have $b(y) = 0$ for all $y \in \mathcal{Y}(\OO_v)$ \cite[Prop.~8.2.1]{Poo17}. Proposition \ref{prop:Brauer_integral} obtains
a generalisation of this to the case when $b$ may be ramified on $Y$.

\begin{proof}[Proof of Proposition \ref{prop:Brauer_integral}]
	By \eqref{eqn:ell}, it suffices to prove the result when  $b$ 
	has order  power of a prime $\ell$.
	Choose a finite set of primes $S$ such that $\ell \in \OO_{k,S}^*$,
	together with a regular integral model $\mathcal{Y}$ 
	for $Y$ over $\OO_{k,S}$. Enlarging $S$ if necessary, we extend $b$ to an element of 
	some open subset $\mathcal{U} \subset \mathcal{Y}$ such that $\mathcal{U} \to \Spec \OO_{k,S}$
	is surjective. Thus $b \otimes \FF_v \in \Br \mathcal{U}_{\FF_v}$ is well-defined for all $v \notin S$.
	Let $U=\mathcal{U} \cap Y$; by \eqref{seq:purity} we may assume that the complement of $U$ in $Y$ is pure of codimension $1$.
	We also assume that $\mathcal{Y} \otimes \FF_v$ is geometrically integral for all $v \notin S$.

	Now let $v \notin S$ be such that $b \otimes k_v$ is unramified at all codimenison $1$ points
	of $Y_{k_v}$. We claim that $b$ is also unramified at all codimension $1$ points of
	$\mathcal{Y}_{\OO_v}$. To see this, let $\mathcal{D}$ be an irreducible divisor of $\mathcal{Y}_{\OO_v}$.
	If $\mathcal{D} = \mathcal{Y}_{\FF_v}$,
	then $b$ is unramified along $\mathcal{D}$; indeed, by construction $b$ is well-defined on the non-empty
	open subset $\mathcal{Y}_{\FF_v} \cap \mathcal{U}$ of $\mathcal{Y}_{\FF_v}$.	
	Assume instead that $\mathcal{D}$ meets the generic fibre
	in some divisor $D$.
	The residue maps then give rise to the commutative diagram
	\[\xymatrix{
	\Br \kappa(\mathcal{Y}_{\OO_v})\{\ell\}  \ar[d] \ar[r]^{\res_{\mathcal{D}}\hspace{10pt}} & \HH^1(\kappa(\mathcal{D}), \QQ_\ell/\ZZ_\ell) \ar[d]  \\ 
	\Br \kappa(Y_{k_v}) \{\ell\} \ar[r]^{\res_D\hspace{10pt}} & \HH^1(\kappa(D), \QQ_\ell/\ZZ_\ell).
	} \] 
	However the maps $\kappa(\mathcal{Y}_{\OO_v}) \to \kappa(Y_{k_v})$ and 
	$\kappa(\mathcal{D}) \to \kappa(D)$ are isomorphisms,
	thus the downward maps are also isomorphisms. As $\res_D(b) = 0$ by assumption,
	we find that $\res_{\mathcal{D}}(b) = 0$. This proves the claim,
	hence $b \otimes k_v \in \Br \mathcal{Y}_{\OO_v}$
	by \eqref{seq:purity}.
	
	We may now prove the proposition. Let $y \in \mathcal{Y}(\OO_v)$. As $b \otimes k_v \in \Br \mathcal{Y}_{\OO_v}$,
	we have  $b(y) \in \Br \OO_v = 0$ (see \cite[Cor.~6.9.3]{Poo17}), as required.
\end{proof}

We now obtain a quantitative description of those primes $v$ which satisfy the assumptions of Proposition \ref{prop:Brauer_integral}.
For simplicity, we only consider geometrically irreducible divisors.

\begin{proposition} \label{prop:frob_residue}
	Let $Y$ be a smooth geometrically integral
	variety over a number field $k$ and let $\br \subset \Br k(Y)$ be a finite subgroup.
	Let $D \in Y^{(1)}$ and assume that $k$ is algebraically closed in the residue field $k(D)$.
	Then the set of places
	$$F_D(b) := \{v \in \Val(k) : \res_D( \br )\otimes k_v = 0\}$$ 
	is frobenian. Moreover:
	
	\begin{enumerate}
		\item If $\res_D(\br) \otimes \bar{k} \neq 0$ then $F_D(b) = \emptyset$. 
		\item If $\res_D(\br) \otimes \bar{k} = 0$ then 
		$\dens(F_D(b)) = 1 / |\res_D(\br)|.$
	\end{enumerate}
\end{proposition}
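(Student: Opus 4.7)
The plan is to translate the question into Galois theory and distinguish a ``constant'' regime from a ``geometric'' one.

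By Pontryagin duality, the finite subgroup
\[
\Phi := \res_D(\br) \subset \HH^1(\kappa(D), \QQ/\ZZ) = \Hom^{\mathrm{cont}}(G_{\kappa(D)}, \QQ/\ZZ)
\]
corresponds to a finite abelian Galois extension $M/\kappa(D)$ with $\Gal(M/\kappa(D))^\vee \cong \Phi$, and in particular $[M : \kappa(D)] = |\res_D(\br)|$. Since $k$ is algebraically closed in $\kappa(D)$, there is (in characteristic zero) an exact sequence
\[
1 \to G^{\mathrm{geom}} \to G_{\kappa(D)} \to G_k \to 1, \qquad G^{\mathrm{geom}} := \Gal(\overline{\kappa(D)}/\bar k \cdot \kappa(D)),
\]
and the condition $\res_D(\br) \otimes \bar k = 0$ is equivalent to $\Phi$ vanishing on $G^{\mathrm{geom}}$, i.e.\ to $M = k' \cdot \kappa(D)$ for a finite abelian extension $k'/k$ of degree $|\res_D(\br)|$.

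For part (2) I use this equivalence: $\Phi$ identifies with the character group of $\Gal(k'/k)$, and the pullback $\Phi \otimes k_v$ along $G_{\kappa(D_{k_v})} \to G_{\kappa(D)} \to G_k$ factors through the decomposition group at $v$. It thus vanishes if and only if $k' \subseteq k_v$, i.e.\ if and only if $v$ splits completely in $k'$. The indicator of complete splitting is a class function on $\Gal(k'/k)$, so $F_D(\br)$ is frobenian; its density is $1/[k':k] = 1/|\res_D(\br)|$ by Chebotarev.

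For part (1), I pick $\chi \in \Phi$ with $\chi|_{G^{\mathrm{geom}}} \neq 0$ and show $\chi \otimes k_v \neq 0$ for every place $v$. Using $\bar{k_v} \cdot \kappa(D_{k_v}) = \bar{k_v} \cdot \kappa(D)$, the restriction map $G_{\kappa(D_{k_v})} \to G_{\kappa(D)}$ restricts further to a map $G_{\bar{k_v} \cdot \kappa(D_{k_v})} \to G^{\mathrm{geom}}$ which is surjective (in fact an isomorphism): this is the standard fact that the absolute Galois group of a finitely generated field over an algebraically closed base is unchanged when one enlarges the base to another algebraically closed field. It follows that the restriction of $\chi \otimes k_v$ to $G_{\bar{k_v} \cdot \kappa(D_{k_v})}$ coincides, via this surjection, with $\chi|_{G^{\mathrm{geom}}} \neq 0$, and so is nonzero. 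Hence $F_D(\br) = \emptyset$ is trivially frobenian of density zero. The main technical point is this surjectivity, which hinges on the stability of the absolute Galois group of $\bar k \cdot \kappa(D)$ under the algebraically closed extension $\bar k \subset \bar{k_v}$.
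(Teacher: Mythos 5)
Your proof is correct and follows essentially the same route as the paper's: both proofs identify $\res_D(\br)$ with a finite abelian extension of $\kappa(D)$, use the exact sequence $1 \to G^{\mathrm{geom}} \to G_{\kappa(D)} \to G_k \to 1$ (the paper phrases this as inflation--restriction) to split the analysis into the case where the extension descends to $k$ (part (2), leading to Chebotarev for $k'/k$) and the case where it has a nontrivial geometric part (part (1), leading to nonvanishing after every local base change). The one presentational difference is in part (1): the paper passes to a finite extension of $k$ so that the relevant function fields are those of geometrically integral varieties and then argues with field extensions, whereas you work over $\bar k$ and $\bar{k_v}$ directly and phrase the key step as surjectivity of $G_{\bar{k_v}\cdot\kappa(D)} \to G_{\bar k\cdot\kappa(D)}$; these are the same mathematical fact (finite covers of geometrically integral varieties stay integral after an algebraically closed base extension). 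One small correction: your parenthetical ``(in fact an isomorphism)'' is false for the full absolute Galois group of the function field --- by the freeness results of Pop and Harbater, $G_{\bar k(t)}$ is free profinite of rank $|\bar k| = \aleph_0$ while $G_{\bar{k_v}(t)}$ is free of rank $|\bar{k_v}| = 2^{\aleph_0}$, so these are not isomorphic --- but since your argument only uses surjectivity, the proof is unaffected.
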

	In the statement, for a field extension $k \subset L$ we let 
	$\cdot \otimes_k L: \mathrm{H}^1(k(D), \QQ/\ZZ) \to \mathrm{H}^1(k(D) \otimes_k L, \QQ/\ZZ)$
	denote the usual restriction map on Galois cohomology.
\begin{proof}
	The group of residues $\res_D(\br)$ is a subgroup of $\mathrm{H}^1(k(D), \QQ/\ZZ)$. Thus it determines some finite abelian field extension $k(D) \subset R$ of degree $\res_D(\br)$.

	First assume that $\res_D(\br) \otimes \bar{k} \neq 0$. 
	We need to show that $\res_D(\br) \otimes k_v \neq 0$
	for all places $v$.
	To do this, it suffices to show that
	these residues are non-zero after a finite field extension. 
	In particular we may pass to a finite field extension if we wish,
	and assume that $k$ is algebraically closed in $R$.
	In this case $D$ is geometrically irreducible,
	the field $R$ is the function field of a geometrically irreducible variety over $k$, 
	and $k(D) \subset R$ is a non-trivial finite field extension.
	It follows that $k(D) \otimes k_v \subset R \otimes k_v$ is still a non-trivial
	finite field extension for all $v$. However, this is exactly the field extension
	corresponding to the residues $\res_D(\br) \otimes k_v$; this group is therefore
	non-trivial, as required.

	Now assume that $\res_D(\br) \otimes \bar{k} = 0$. Inflation-restriction
	yields
	$$0 \to \HH^1(\Gal(\bar{k}(D)/k(D)),\QQ/\ZZ) \to \HH^1( k(D),\QQ/\ZZ) \to \HH^1( \bar{k}(D),\QQ/\ZZ);$$
	thus $k(D) \to R$ is the base-change of some 
	finite abelian extension $k \subset K$ of number fields of degree $\res_D(\br)$.
	A moment's thought reveals that
	$$F_D(b) = \{ v \in \Val(k) : v \text{ is completely split in } K\}. $$
	The set of such places is clearly frobenian of density 
	$1/[K:k] = 1 / |\res_D(\br)|$.
\end{proof}

\subsection{Proof of Theorem \ref{thm:Brauer}}
The upper bound is obtained in \cite[\S 5.3]{LS16}. It therefore suffices to prove the lower bound. 

Let $S$ be a sufficiently large set of primes.
Let $U$ and $\br$ be as in Theorem~\ref{thm:Brauer} and $y \in U(\QQ)_\br$.
Let $\Theta$ denote the set of codimension $1$ points of $\PP^n$ which lie outside of $U$. For each $D \in \Theta$, let $L_D \in \ZZ[x_0,\dots,x_n]$ be the primitive linear form defining $D$ and let $\mathcal{P}_{D}$ be the union of $S$ and those primes $p$ for which $\res_D \br \otimes \QQ_p = 0$. As $\br \subset \Br_1 U$ and each $D$ is geometrically integral, Proposition \ref{prop:frob_residue} implies that $\mathcal{P}_{D}$ is frobenian of density $1/|\res_D(\br)|$. 
We define the completely multiplicative function $\varpi_D$ via
$$\varpi_D(n) =
\begin{cases}
	1, & \forall p \mid n \text{ we have } p \in \mathcal{P}_D \\
	0, & \text{otherwise}.
\end{cases}$$

\begin{lemma}
	Each $\varpi_D$ is a frobenian multiplicative function of mean $1/|\res_D(\br)|$.
\end{lemma}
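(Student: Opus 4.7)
The plan is to verify directly the three conditions of Definition \ref{def:e-weak_frobenian} (for all $\varepsilon \in (0,1)$, as required by Definition \ref{def:class_F}) and then to read off the mean from the density of $\mathcal{P}_D$.

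First, the restriction of $\varpi_D$ to the primes coincides with the indicator function of $\mathcal{P}_D$. Since each $D \in \Theta$ is a hyperplane in $\PP^n$, it is geometrically integral, so $\QQ$ is algebraically closed in $\QQ(D)$ and Proposition \ref{prop:frob_residue} applies. As $\br \subset \Br_1 U$, each residue $\partial_D \br$ trivialises over $\bar{\QQ}$, so case (2) of Proposition \ref{prop:frob_residue} gives that $\mathcal{P}_D$ (which differs from $F_D(b)$ only by the finite set $S$, a frobenian set of density $0$) is frobenian of density $1/|\res_D(\br)|$. This verifies condition (1) of Definition \ref{def:e-weak_frobenian}, and simultaneously identifies $m(\varpi_D) = 1/|\res_D(\br)|$.

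Conditions (2) and (3) are trivial: since $\varpi_D$ is $\{0,1\}$-valued we have $|\varpi_D(n)| \leq 1 \leq n^\varepsilon$ for every $n \geq 1$ and every $\varepsilon > 0$, and $|\varpi_D(p^k)| \leq 1 = H^k$ with $H=1$. Hence $\varpi_D$ is $\varepsilon$-weak frobenian for every $\varepsilon \in (0,1)$, and therefore a frobenian multiplicative function in the sense of Definition \ref{def:class_F}.

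There is no real obstacle here; the lemma is essentially bookkeeping that assembles Proposition \ref{prop:frob_residue} with the definitions. The only point worth double-checking is that passing from $F_D(b)$ to $\mathcal{P}_D = S \cup F_D(b)$ (and possibly deleting from $F_D(b)$ finitely many primes of $S$) does not affect the property of being frobenian or its density, which is immediate since altering a frobenian set on a finite set of places leaves it frobenian with the same mean.
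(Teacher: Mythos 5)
Your proof is correct and takes the same route as the paper, which simply says ``Follows immediately from Proposition~\ref{prop:frob_residue}''; you have unpacked that one-liner by verifying the three conditions of Definition~\ref{def:e-weak_frobenian} directly and checking the hypotheses of Proposition~\ref{prop:frob_residue} (geometric integrality of $D$ and vanishing of $\res_D(\br)\otimes\bar{\QQ}$, which follows from $\br\subset\Br_1 U$). Nothing is missing.
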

\begin{proof}
	Follows immediately from Proposition \ref{prop:frob_residue}.
\end{proof}

These functions enjoy an analogue of Lemma \ref{lem:detector}.
\begin{lemma} \label{lem:Brauer_large_primes}
	Enlarging $S$ if necessary, the following holds.
	Let $\xx=(x_0,\ldots,x_n)$ be a primitive integer vector.
	If $\prod_{D \in \Theta} \varpi_D(L_D(\xx)) = 1$
	then $b(x_0:\cdots:x_n) \otimes \QQ_p = 0 \in \Br \QQ_p$ for all $p \notin S$ and all $b \in \br$.
\end{lemma}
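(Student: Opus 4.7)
The plan is to combine Proposition \ref{prop:Brauer_integral} with the definition of the frobenian detectors $\varpi_D$. Fix a primitive integer vector $\xx$ satisfying $\prod_{D \in \Theta} \varpi_D(L_D(\xx)) = 1$, a prime $p \notin S$, and an element $b \in \br$. Primitivity ensures that $y = (x_0:\cdots:x_n)$ extends uniquely to a point $\tilde y \in \PP^n(\ZZ_p)$. The key invariant to introduce is the subset
$$T = T_p(\xx) := \{D \in \Theta : p \mid L_D(\xx)\} \subseteq \Theta,$$
which is precisely the collection of boundary divisors whose reduction mod $p$ passes through $y \bmod p$.

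The first step would be to introduce the Zariski open subscheme $Y_T := \PP^n \setminus \bigcup_{D \in \Theta \setminus T} D$ together with its natural integral model $\mathcal{Y}_T \subset \PP^n_{\ZZ_S}$ obtained by removing the Zariski closures of those same divisors. By the very definition of $T$, the point $\tilde y$ lies in $\mathcal{Y}_T(\ZZ_p)$. Next I would analyse the codimension one points of $Y_T$: each is either a codimension one point of $U$ (where $b$ is automatically unramified because $b \in \Br U \subset \Br_1 U$) or one of the finitely many divisors $D \in T$. For the latter, the hypothesis $\varpi_D(L_D(\xx)) = 1$ together with $p \mid L_D(\xx)$ forces $p \in \mathcal{P}_D$, which by construction means $\res_D(\br) \otimes \QQ_p = 0$. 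Hence $b \otimes \QQ_p$ is unramified at every codimension one point of $(Y_T)_{\QQ_p}$.

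With this set-up, Proposition \ref{prop:Brauer_integral} applied to $(Y_T, b)$ yields a finite set of primes $S_{T,b}$ outside of which the vanishing $b(\tilde y) = 0 \in \Br \QQ_p$ holds for every $\tilde y \in \mathcal{Y}_T(\ZZ_p)$. Enlarging the initial $S$ to contain the finite union $\bigcup_{T \subseteq \Theta, b \in \br} S_{T,b}$ — finite because both $\Theta$ and $\br$ are finite — delivers the statement uniformly in $\xx$ and in $b$. The main point to verify is merely that the regular models provided by Proposition \ref{prop:Brauer_integral} can be taken to be our explicit $\mathcal{Y}_T$; this is routine because $\PP^n_{\ZZ_S}$ is smooth over $\Spec \ZZ_S$ and, after further enlarging $S$ so that the reductions of the hyperplanes $L_D$ remain distinct hyperplanes over every residue field, $\mathcal{Y}_T$ is the complement of a smooth divisor in a smooth scheme, hence regular. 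I do not expect any real obstacle: the entire argument is an organised application of Proposition \ref{prop:Brauer_integral} once the combinatorial bookkeeping via $T$ is introduced.
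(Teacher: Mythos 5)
Your argument is correct and follows the same route as the paper: introduce the set $T_p(\xx)$ of boundary divisors through $\xx \bmod p$, pass to the open $\mathcal{Y}_T$ obtained by deleting the closures of the remaining divisors, note that the hypotheses force $b\otimes\QQ_p$ to be unramified at every codimension-one point of $(Y_T)_{\QQ_p}$ (either by purity on $U$ or because $p\in\mathcal{P}_D$), and conclude via Proposition~\ref{prop:Brauer_integral} after enlarging $S$ uniformly over the finitely many pairs $(T,b)$. One small slip: you wrote $\Br U \subset \Br_1 U$, but the containment goes the other way ($\Br_1 U \subset \Br U$); the point you need, that $b\in\Br U$, still holds since $\br\subset\Br_1 U\subset\Br U$.
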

\begin{proof}
	Let $p$ be a prime and 
	let $\Theta_p(\xx)$ be the subset of $\Theta$ of those $D$ for which $p \mid L_D(\xx)$.
	Let $\mathcal{Y} = \PP^n_{\ZZ} \setminus (\cup_{D \notin \Theta_p(\xx)} \mathcal{D})$,
	where $\mathcal{D}$ is the closure of $D$ in $\PP^n_{\ZZ}$.
	Our choice of $\Theta_p(\xx)$ implies that $x \in \mathcal{Y}(\ZZ_p)$. As $\varpi_D(p) = 1$ for all $D \in \Theta_p(\xx)$,
	we have $p \in \cap_{D \in \Theta(\xx)}\mathcal{P}_D$. Thus, by definition, 
	the Brauer elements $\br \otimes \QQ_p$ are unramified at each $D \in \Theta_p(\xx)$. 
	If $p$ is sufficiently large (independently of $\xx$),
	it now follows from Proposition~\ref{prop:Brauer_integral} that $b(x) \otimes \QQ_p = 0$
	for all $b \in \br$, as required.
\end{proof}

We next obtain an analogue of Lemma \ref{lem:conclusion}.
\begin{lemma} \label{lem:Brauer_bound}
	There exists $\delta > 0$ such that
	$$N(U,\br,B) \geq \frac{1}{2} \sum_{\substack{(x_0,\ldots,x_n) \in \ZZ^{n+1} 
	\\ \gcd(x_0,\ldots,x_n) = 1  \\ \max_i |x_i| \leq B \\ 
	\max\limits_{\mathclap{v \in S \cup \{\infty\}}} \,  |x_i/x_0 -y_i/y_0|_v < \delta }} 
	 \prod_{D \in \Theta} \varpi_D(L_D(x_0,\ldots,x_n)).$$
\end{lemma}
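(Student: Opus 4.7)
The strategy is to run the same argument as in the proof of Lemma~\ref{lem:conclusion}, splitting the verification that $b(\xx) = 0 \in \Br \QQ$ for all $b \in \br$ into a large-prime contribution handled by Lemma~\ref{lem:Brauer_large_primes} and a contribution at the places of $S \cup \{\infty\}$ handled by continuity of the Brauer pairing, and then gluing via the fundamental exact sequence~\eqref{eqn:CFT}.

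First I would extend each $\varpi_D$ to $\ZZ$ by setting $\varpi_D(m) = 0$ for $m \leq 0$, in line with the convention of Theorem~\ref{thm:frob}. Then each summand on the right-hand side is $0$ or $1$, and is equal to $1$ precisely when $L_D(\xx) > 0$ and $\varpi_D(L_D(\xx)) = 1$ for every $D \in \Theta$. In particular any $\xx$ contributing to the sum satisfies $L_D(\xx) \neq 0$ for every $D \in \Theta$, so it defines a rational point $(x_0:\cdots:x_n) \in U(\QQ)$. The factor $1/2$ accounts for the over-counting of each projective point by the pair $\pm \xx$ of primitive integer representatives.

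Next I would verify that for any such $\xx$ one has $b(x_0:\cdots:x_n) = 0 \in \Br \QQ$ for every $b \in \br$. For $p \notin S$ this is precisely the conclusion of Lemma~\ref{lem:Brauer_large_primes}, provided $S$ has been enlarged to include the finite set of primes supplied by that lemma. For $v \in S \cup \{\infty\}$ I would argue as follows. Since $y \in U(\QQ)_\br$, we have $b(y) = 0 \in \Br \QQ$, hence in particular $b(y) \otimes \QQ_v = 0 \in \Br \QQ_v$. The Brauer evaluation map $U(\QQ_v) \to \Br \QQ_v$ associated with $b$ is locally constant (see e.g.\ \cite[Prop.~8.2.9]{Poo17}), so there exists a $v$-adic neighbourhood of $y$ in $U(\QQ_v)$ on which this evaluation vanishes. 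Choosing $\delta > 0$ small enough that the condition $|x_i/x_0 - y_i/y_0|_v < \delta$ forces $(x_0:\cdots:x_n)$ to lie inside this neighbourhood for every $v \in S \cup \{\infty\}$ and every $b \in \br$ (a finite set of conditions), we conclude $b(x_0:\cdots:x_n) \otimes \QQ_v = 0$ for those places as well. Combining with the previous paragraph, the local invariants of $b(x_0:\cdots:x_n)$ vanish at every place, and injectivity of $\Br \QQ \hookrightarrow \bigoplus_v \Br \QQ_v$ from \eqref{eqn:CFT} yields $b(x_0:\cdots:x_n) = 0$ in $\Br \QQ$, so $(x_0:\cdots:x_n) \in U(\QQ)_\br$.

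The only mild subtlety is ensuring that $\delta$ can be chosen uniformly in $b \in \br$ and $v \in S \cup \{\infty\}$; this is immediate because $\br$ is finite and $S$ is finite, so one simply takes the minimum of finitely many positive constants. The hard part of the whole argument is really located upstream, in Lemma~\ref{lem:Brauer_large_primes} and Proposition~\ref{prop:Brauer_integral}, which handle the ramified residues; once those are in place, the present lemma is just a counting statement packaging the local-to-global reconstruction of the triviality of $b$.
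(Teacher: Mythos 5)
Your proposal is correct and follows essentially the same route as the paper: use Lemma~\ref{lem:Brauer_large_primes} for the primes outside $S$, local constancy of the Brauer pairing around the fixed point $y \in U(\QQ)_\br$ for the places in $S \cup \{\infty\}$, and then the injectivity part of \eqref{eqn:CFT} to conclude $b(x) = 0$ in $\Br\QQ$. Your extra remarks (that the convention $\varpi_D \equiv 0$ on non-positive integers forces $L_D(\xx) \neq 0$, hence $x \in U(\QQ)$, and that the factor $1/2$ corrects for $\pm\xx$) are details the paper leaves implicit, but they do not change the argument.
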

\begin{proof}
	Let $\xx=(x_0,\ldots,x_n)$ be a primitive integer vector with the property
	$\prod_{D \in \Theta} \varpi_D(L_D(x_0,\ldots,x_n)) =1$.
	By Lemma \ref{lem:Brauer_large_primes}, we have $b(x) \otimes \QQ_p = 0$ for all primes $p \notin S$, where $x=(x_0:\cdots:x_n)$.
	
	For the real place and small primes recall that $y \in U(\QQ)_\br$.
	As the Brauer pairing is locally constant for the real and $p$-adic topologies \cite[Prop.~8.2.9]{Poo17},
	we deduce the existence of $\delta > 0$ such that if 
	$|x_i/x_0 -y_i/y_0|_v < \delta$
	for each $i \neq 0$ and each $v \in S \cup \infty$, then $b(x) \otimes \QQ_v = 0$
	for all $v \in S \cup \{\infty\}$ and all $b \in \br$.
	
	For $\xx$ as in the sum, we have shown that $b(x) \otimes \QQ_v = 0$
	for all places $v$ of $\QQ$ and all $b \in \br$. However 
	the Hasse principle for $\Br \QQ$ \eqref{eqn:CFT}
	implies that $b(x) = 0 \in \Br \QQ$ for all $b \in \br$, as required.
\end{proof}

Given Lemma \ref{lem:Brauer_bound}, we see that Theorem \ref{thm:Brauer} follows from Theorem \ref{thm:frob_applications}. \qed

\subsection{A negative result}
We finish this section by highlighting some of the subtleties which arise if one is trying to generalise the proof of Theorem \ref{thm:Brauer} to the case $\br \subset \Br U$, i.e. where the Brauer group elements can be  transcendental. Here we  have a transcendental analogue of Lemma \ref{lem:DP4}.

\begin{example} \label{ex:Hooley}
	Consider the conic bundle
	\begin{equation} \label{eqn:Hooley}
		a_0x^2 + a_1x_1^2 + a_2x_2^2 = 0.
	\end{equation}
	There is no arithmetic function $\varpi$ with the following properties:
	Let $p$ be an odd prime and $(a_0,a_1,a_2)$ a primitive integer vector such that
	$p \| a_0$ but $p \nmid a_1a_2$. Then
	$\varpi(a_0) = 1$ if and only if the conic \eqref{eqn:Hooley} has a $\QQ_p$-point.
\end{example}

This is proved without  difficulty.
The papers \cite{LTT17,Lou13} also restrict to algebraic Brauer group elements, as the transcendental case is more difficult in general. The only transcendental cases known are the lower bounds obtained by Hooley in \cite{Hoo93, Hoo07}, which includes the correct lower bound for Example \ref{ex:Hooley}. It would be interesting to try to solve Serre's problem for other transcendental cases.

\section{Multinorms} \label{sec:norms}
We now prove Theorem \ref{thm:norms}. We let $V,W$ and $E$ be as in the statement
of Theorem \ref{thm:norms}. Let $\psi:W \to \PP^n$ be the projection given by the 
$x$-coordinate. By \cite[Lem.~5.2]{LS16} we have
$$N_{\mathrm{loc}}(\psi,B) = N_{\mathrm{loc}}(\pi,B) + O_\varepsilon(B^{n + 1/2 + \varepsilon})$$
for any $\varepsilon > 0$. Thus to prove a lower bound we may work with the explicit equation \eqref{eqn:multinorm}
for $W$. 
Theorem \ref{thm:norms} concerns rational numbers, so we first pass to a homogeneous problem
involving integers.	Let
\begin{equation}
	e = \gcd\{ [E_i: \QQ]: i = 1, \dots, s\},
\end{equation}
i.e.~$e$ is the $\gcd$ of the degrees of the maximal subfields $E_i$ of the finite \'etale $\QQ$-algebra $E$.
We let $\mathbf{L}_j$
be the homogenisation of the linear polynomial $L_j$. We also let 
$\mathbf{L}_0 = x_0$ and let $a_0 \in \ZZ$ be a representative of the congruence class $-r \bmod e$. 
A moment's thought reveals the following.

\begin{lemma} \label{lem:moment}
	Let $\xx \in \ZZ^{n+1}$ be a primitive integer vector with $x_0 \neq 0$. Then  $\psi^{-1}(x_1/x_0,\dots,x_n/x_0)$ is everywhere locally soluble if and only if
	$ \mathbf{L}_0(\xx)^{a_0}\cdots \mathbf{L}_r(\xx)^{a_r}$
	is a norm from $\prod_{i=1}^r \Adele_{E_i}$.
\end{lemma}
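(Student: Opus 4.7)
The plan is to reduce local solubility of the fibre to a norm condition on $c(\xx) := \prod_{j=1}^r L_j(x_1/x_0,\dots,x_n/x_0)^{a_j}$, and then, using B\'ezout, absorb the negative power of $x_0$ arising from dehomogenisation into the positive power $\mathbf{L}_0(\xx)^{a_0} = x_0^{a_0}$.

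First I would note that, by construction, the fibre $\psi^{-1}(x_1/x_0,\dots,x_n/x_0)$ has a $\QQ_v$-point precisely when $c(\xx)$ lies in the image of the norm map $(E \otimes_\QQ \QQ_v)^* \to \QQ_v^*$, which equals $\prod_{i=1}^s \Norm_{E_i \otimes \QQ_v/\QQ_v}((E_i \otimes \QQ_v)^*)$. Dehomogenising gives
\begin{equation*}
c(\xx) = x_0^{-\sum_{j=1}^r a_j} \prod_{j=1}^r \mathbf{L}_j(\xx)^{a_j},
\end{equation*}
so everywhere local solubility of the fibre is equivalent to $c(\xx)$ being a norm from $\prod_{i=1}^s \Adele_{E_i}$.

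The key step is to verify that $x_0^e$ is a global (and hence everywhere local) norm from $E$. By B\'ezout, I would choose integers $m_1,\dots,m_s$ with $\sum_{i=1}^s m_i [E_i:\QQ] = e$; then
\begin{equation*}
x_0^e = \prod_{i=1}^s \Norm_{E_i/\QQ}(x_0^{m_i})
\end{equation*}
exhibits $x_0^e$ as a product of individual $E_i$-norms, and therefore as a norm from $E = \prod_i E_i$.

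Finally, since $a_0$ is chosen so that $a_0 + \sum_j a_j \equiv 0 \Mod{e}$, the rational number $x_0^{a_0 + \sum_j a_j}$ is an integer power of $x_0^e$ and therefore an adelic norm. The identity
\begin{equation*}
\mathbf{L}_0(\xx)^{a_0}\cdots \mathbf{L}_r(\xx)^{a_r} = x_0^{a_0 + \sum_{j=1}^r a_j} \cdot c(\xx)
\end{equation*}
then shows that $c(\xx)$ is an adelic norm if and only if $\prod_{j=0}^r \mathbf{L}_j(\xx)^{a_j}$ is, which is what the lemma asserts. The only step that requires any thought is the B\'ezout identity used to realise $x_0^e$ as a norm; everything else is elementary bookkeeping, and the primitivity of $\xx$ plays no role in the argument.
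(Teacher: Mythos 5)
The paper offers no written proof of this lemma (``A moment's thought reveals the following''), and your argument supplies exactly the implicit reasoning: dehomogenise to produce the factor $x_0^{-\sum_j a_j}$, realise $x_0^e$ as a global norm from $E$ via B\'ezout applied to $e = \gcd_i[E_i:\QQ]$, and absorb the power of $x_0$ into $\mathbf{L}_0(\xx)^{a_0}$ using the congruence $a_0 + \sum_j a_j \equiv 0 \Mod{e}$. This is correct, and your observation that the image of the norm map $(E\otimes\QQ_v)^* \to \QQ_v^*$ is a subgroup is the point that makes the absorption legitimate.

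One caveat: the paper sets $a_0$ to be a representative of $-r \bmod e$, whereas your argument needs (and uses) $a_0 \equiv -\sum_{j=1}^r a_j \Mod{e}$; these agree only when every $a_j = 1$. The latter is the right normalisation — it is also what Lemma~\ref{lem:norms_finish} requires, since $L_1^{a_1}\cdots L_r^{a_r}$ has a pole of order $\sum_j a_j$ along the hyperplane at infinity — so the ``$-r$'' in the paper should be read as ``$-\sum_j a_j$'', and you have (perhaps silently) made this correction. Likewise $\prod_{i=1}^{r}$ in the lemma's statement should be $\prod_{i=1}^{s}$, which you also use. The only point left implicit in your write-up is the degenerate case $\prod_{j\geq 1}\mathbf{L}_j(\xx)=0$: there the group-theoretic manipulation does not apply literally, but both sides of the equivalence hold trivially (take $t=0$), so nothing is lost.
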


Thus we  need to understand when a $p$-adic number is a norm from a product of finite field extensions.
This is achieved by the following simple lemma.

\begin{lemma} \label{lem:local_norm}
	Let $F$ be a finite \'{e}tale $\QQ_p$-algebra such that the integral closure $\mathcal{O}_F$ of $\ZZ_p$
	in $F$ is unramified over $\ZZ_p$.
	Let 
	$f = \gcd_{k \subset F} \,\, [k : \QQ_p],$
	where the greatest common divisor is taken over all maximal subfields $k$ of $F$.
	Then an element $x \in \QQ_p$ is a norm from $F$ if and only if $f \mid v_p(x)$.
\end{lemma}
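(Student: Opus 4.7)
The plan is to reduce to the classical computation of the image of the norm map for a single unramified local extension, and then combine the factor contributions via a Bezout argument.

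First I would use that $F$ is \'etale to write $F \cong \prod_{i=1}^m F_i$ as a product of finite field extensions of $\QQ_p$; the hypothesis that $\mathcal{O}_F = \prod_i \mathcal{O}_{F_i}$ is unramified over $\ZZ_p$ translates into each $F_i/\QQ_p$ being unramified, and I write $d_i = [F_i:\QQ_p]$. Next I would identify $f$ with $\gcd(d_1,\dots,d_m)$. Any subfield $k \subset F$ embeds diagonally, because each projection $k \hookrightarrow F \to F_i$ is a nonzero (hence injective) field map: $k$ must contain the identity $(1,\dots,1)$ of $F$. Since the subfields of an unramified extension of $\QQ_p$ are themselves unramified and totally ordered by divisibility of degree, this forces $[k:\QQ_p] \mid d_i$ for every $i$; and conversely the unique unramified extension of $\QQ_p$ of degree $\gcd(d_i)$ embeds diagonally into $F$. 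Hence the maximal subfield degree is exactly $\gcd(d_1,\dots,d_m)$.

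For each factor, standard properties of unramified local extensions give the identity $v_p \circ \Norm_{F_i/\QQ_p} = d_i \cdot v_{F_i}$ on $F_i^*$ together with a surjection $\Norm_{F_i/\QQ_p}\colon \mathcal{O}_{F_i}^* \twoheadrightarrow \ZZ_p^*$. The second is the only non-trivial input and follows, via Hensel's lemma along the filtration $1 + p^n \mathcal{O}_{F_i}$, from surjectivity of the finite-field norm $\FF_{p^{d_i}}^* \twoheadrightarrow \FF_p^*$. Thus the image of $\Norm_{F_i/\QQ_p}$ is exactly $\{y \in \QQ_p^* : d_i \mid v_p(y)\}$.

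Finally I would combine the factor images. The forward direction is immediate: if $x = \prod_i \Norm_{F_i/\QQ_p}(y_i)$ then $v_p(x) = \sum_i d_i v_{F_i}(y_i) \in f\ZZ$. For the converse, given $x \in \QQ_p^*$ with $f \mid v_p(x)$, Bezout lets me write $v_p(x) = \sum_i d_i n_i$ with $n_i \in \ZZ$, so that $x \cdot p^{-\sum_i d_i n_i} \in \ZZ_p^*$ is a norm $\Norm_{F_1/\QQ_p}(u)$ for some $u \in \mathcal{O}_{F_1}^*$ by the surjectivity above; then $(u p^{n_1}, p^{n_2}, \dots, p^{n_m}) \in F^*$ has norm $x$. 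I anticipate no serious obstacle here: the only point requiring a moment's care is the diagonal-embedding characterisation of subfields of a product used to identify $f$ with $\gcd(d_i)$.
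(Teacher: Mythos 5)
Your proof is correct and essentially matches the paper's: write $F = \prod_i F_i$ with each $F_i/\QQ_p$ unramified of degree $d_i$, use $v_p \circ \Norm_{F_i/\QQ_p} = d_i\,v_{F_i}$ together with the surjectivity of the norm on units (you reprove this via Hensel and the finite-field norm; the paper simply cites Serre, \emph{Local Fields}, Prop.~V.2.3), and then combine the factors by a $\gcd$/B\'ezout argument. One small remark: in the paper the phrase ``maximal subfields of $F$'' is meant to refer to the direct factor fields $F_i$ (so $f = \gcd(d_1,\dots,d_m)$ is immediate from the decomposition), whereas you read ``subfield'' in the strict sense of a sub-$\QQ_p$-algebra containing $1_F$, which forces the diagonal embedding; as you observe, both readings yield the same value $\gcd(d_1,\dots,d_m)$, so the discrepancy is harmless and the conclusion is unaffected.
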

\begin{proof}
	We write $F$ as a product of its maximal subfields,
	and $p$ is a uniformiser in each of these subfields
	as $\mathcal{O}_F$ is unramified over $\ZZ_p$. This shows that
	$$\{ v_p(N_{F/\QQ_p}(y)) : y \in F\} = f\ZZ$$
	as ideals of $\ZZ$. In particular if $f \nmid v_p(x)$ then $x$ is clearly not a norm from $F$.
	
	So assume that $f \mid v_p(x)$. Then as $p^f$ is a norm from $F$, it suffices to show that
	all units of $\ZZ_p$ are norms from $F$. However this follows from the fact that $\mathcal{O}_F$ is unramified over $\QQ_p$
	\cite[Prop.~V.2.3]{Ser79}.
\end{proof}

We each $j =0,\dots,r$ we therefore let
$$\mathcal{P}_j = S \cup \left\{ \text{primes } p: \gcd_{ \mathclap{k_p \subset E_p}} \,\, [k_p : \QQ_p] \text{  divides } a_j\right\},$$
where the greatest common divisor is taken over  all maximal subfields $k_p$ of the finite \'{e}tale $\QQ_p$-algebra
$E_p = E \otimes_\QQ \QQ_p$ and $S$ is the set of primes which are ramified in $E$. Let
$$\varpi_j(n) =
\begin{cases}
	1, & \forall p \mid n \text{ we have } p \in \mathcal{P}_j \\
	0, & \text{otherwise}.
\end{cases}$$
This is easily seen to be a frobenian multiplication function with $m(\varpi_j) \neq 0$.

\begin{lemma}
	Let $\xx=(x_0,\ldots,x_n)$ be a primitive integer vector with $x_0 \neq 0$.
	Suppose that $\prod_{j=0}^r \varpi_j(\mathbf{L}_j(\xx)) = 1.$
	Then $\psi^{-1}(x)$ has a $\QQ_p$-point for all $p \notin S$.
\end{lemma}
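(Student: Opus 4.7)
The plan is to chain Lemma~\ref{lem:moment} with Lemma~\ref{lem:local_norm} and then verify a valuation-divisibility condition directly from the definition of the $\varpi_j$. First, I would apply the local-at-$p$ version of Lemma~\ref{lem:moment} to reduce the existence of a $\QQ_p$-point on $\psi^{-1}(x_1/x_0,\ldots,x_n/x_0)$ to the condition that $y := \prod_{j=0}^r \mathbf{L}_j(\xx)^{a_j}$ lies in the image of $N_{E_p/\QQ_p}$, where $E_p := E \otimes_\QQ \QQ_p$. Since $S$ includes all primes at which $E$ ramifies, for $p \notin S$ the extension $E_p/\QQ_p$ is unramified, so Lemma~\ref{lem:local_norm} further reduces this to the divisibility $f_p \mid v_p(y)$, where $f_p$ is the gcd of $[k_p:\QQ_p]$ as $k_p$ ranges over the maximal subfields of $E_p$.

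Second, the hypothesis $\prod_{j=0}^r \varpi_j(\mathbf{L}_j(\xx)) = 1$ forces each factor to be $1$, so for every index $j$ and every prime divisor $q$ of $\mathbf{L}_j(\xx)$ one has $q \in \mathcal{P}_j$. Specialising to $q = p \notin S$ and unfolding the definition of $\mathcal{P}_j$ yields $f_p \mid a_j$ whenever $p \mid \mathbf{L}_j(\xx)$. Writing
$$
v_p(y) = \sum_{j=0}^r a_j \, v_p(\mathbf{L}_j(\xx)),
$$
each summand is divisible by $f_p$: this is trivial when $p \nmid \mathbf{L}_j(\xx)$, and it follows from $f_p \mid a_j$ otherwise. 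Hence $f_p \mid v_p(y)$, as required.

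The main subtlety (more bookkeeping than genuine obstacle) is that Lemma~\ref{lem:moment} is phrased in terms of the adelic norm, whereas here only the place-at-$p$ analogue is needed. This should follow immediately from the fact that $N_{E/\QQ}$ factors as the product of the component norms $N_{E_i/\QQ}$, so that a local norm presentation of $y$ over $\QQ_p$ translates to a $\QQ_p$-point of $\psi^{-1}$ by the same argument as Lemma~\ref{lem:moment}. The inclusion of the extra factor $\mathbf{L}_0(\xx)^{a_0} = x_0^{a_0}$ with $a_0 \equiv -r \pmod{e}$ is precisely the device that absorbs the homogenisation discrepancy $x_0^{-\sum_{j \geq 1} a_j}$ arising when passing from affine to projective coordinates, so that the chain of equivalences collapses to the clean valuation condition handled above.
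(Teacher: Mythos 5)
Your proof is correct and takes essentially the same route as the paper: reduce (implicitly via the local form of Lemma~\ref{lem:moment}) to showing that $\prod_{j=0}^r\mathbf{L}_j(\xx)^{a_j}$ is a norm from $E_p$, then apply the valuation criterion of Lemma~\ref{lem:local_norm} together with the observation that $\varpi_j(\mathbf{L}_j(\xx))=1$ forces $f_p\mid a_j$ whenever $p\mid\mathbf{L}_j(\xx)$. The only cosmetic difference is that the paper argues factor-by-factor (each $\mathbf{L}_j(\xx)^{a_j}$ is shown to be a local norm, and then a product of norms is a norm), whereas you sum the valuations directly; this is the same calculation.
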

\begin{proof}
	Let $p \notin S$ and let $j \in \{0,\dots,r\}$ with $\varpi_j(\mathbf{L}_j(\xx)) = 1$.
	First suppose that $p \nmid \mathbf{L}_j(\xx)$. Then $\mathbf{L}_j(\bx)^{a_j}$ is a $p$-adic unit
	hence a local norm  by Lemma \ref{lem:local_norm}. Next suppose that 
	$p \mid \mathbf{L}_j(\xx)$. Then as $\varpi_j(p)=1$ we have $p \in \mathcal{P}_j$.
	As  $a_j \mid v_{p}(\mathbf{L}_j(\xx)^{a_j})$, 
	it follows from Lemma \ref{lem:local_norm} and the 
	choice of $\mathcal{P}_j$ that $\mathbf{L}_j(\xx)^{a_j}$ is a local norm.
	
	Thus when $\prod_{j=0}^r \varpi_j(\mathbf{L}_j(\xx)) = 1$ we see that each 
	 $\mathbf{L}_j(\xx)^{a_j}$ is a local norm.
	 To finish, it suffices to note that the product of norms is again a norm.
\end{proof}

This takes care of the primes not in $S$. For small primes and the real place one proceeds
in an analogous manner to \S \ref{sec:small_primes}. An application
 of Theorem \ref{thm:frob_applications} then completes the proof of the following
more explicit version of Theorem \ref{thm:norms}.

\begin{theorem} \label{thm:norms_2}
	In the above notation and the notation of Theorem \ref{thm:norms} we have
	$$N_{\mathrm{loc}}(\pi,B) \gg B^{n+1}\prod_{j=0}^r(\log B)^{\dens(\mathcal{P}_j)-1}.$$
\end{theorem}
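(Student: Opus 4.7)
The plan is to assemble the ingredients already developed in this section and reduce the theorem to a direct application of Theorem \ref{thm:frob_applications}. The previous lemma shows that any primitive integer vector $\xx$ with $x_0 \neq 0$ satisfying $\prod_{j=0}^r \varpi_j(\mathbf{L}_j(\xx)) = 1$ yields a fibre $\psi^{-1}(\xx)$ with a $\QQ_p$-point for every $p \notin S$. What remains is (i) to find an anchor point at which all the detectors take value one, (ii) to enforce local solubility at the places in $S \cup \{\infty\}$, and (iii) to invoke Theorem \ref{thm:frob_applications}.

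For the anchor, I would use the hypothesis that $V$ has a smooth fibre that is everywhere locally soluble. Using that the $O_\varepsilon(B^{n+1/2+\varepsilon})$ discrepancy between $N_{\mathrm{loc}}(\psi,B)$ and $N_{\mathrm{loc}}(\pi,B)$ is absorbed into the lower bound, this produces a primitive $\by = (y_0,\dots,y_n) \in \ZZ^{n+1}$ with $\psi^{-1}(\by)$ smooth and everywhere locally soluble. After enlarging $S$ if necessary to include every prime dividing $\prod_{j=0}^r \mathbf{L}_j(\by)$, Lemmas \ref{lem:moment} and \ref{lem:local_norm} force $\varpi_j(\mathbf{L}_j(\by)) = 1$ for every $j$, since for such primes $\varpi_j(p)=1$ by construction.

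For the conditions at places of $S \cup \{\infty\}$, I would mimic the argument of \S\ref{sec:small_primes}: the $v$-adic (or real) implicit function theorem applied at the smooth fibre $\psi^{-1}(\by)$ produces $\delta > 0$ so that every primitive $\xx \in \ZZ^{n+1}$ with $\max_i |x_i/x_0 - y_i/y_0|_v < \delta$ for every $v \in S \cup \{\infty\}$ has $\psi^{-1}(\xx)(\QQ_v) \neq \emptyset$ at each such $v$. Combined with the detector lemma, this gives
$$N_{\mathrm{loc}}(\psi,B) \gg \sum_{\substack{\xx \in \ZZ^{n+1} \\ \max_i|x_i| \leq B \\ \gcd(\xx)=1 \\ \max_{v \in S \cup \{\infty\}} |x_i/x_0 - y_i/y_0|_v < \delta}} \prod_{j=0}^r \varpi_j(\mathbf{L}_j(\xx)).$$

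Theorem \ref{thm:frob_applications} then applies to this sum: each $\varpi_j$ is completely multiplicative frobenian with nonzero mean $\dens(\mathcal{P}_j)$, the anchor $\by$ satisfies $\varpi_j(\mathbf{L}_j(\by)) > 0$ for all $j$, and the resulting asymptotic is $\gg B^{n+1}\prod_{j=0}^r (\log B)^{\dens(\mathcal{P}_j)-1}$. The only subtlety I anticipate is verifying pairwise linear independence of the augmented family $\mathbf{L}_0,\dots,\mathbf{L}_r$: the hypothesis of Theorem \ref{thm:norms} only gives this for $\mathbf{L}_1,\dots,\mathbf{L}_r$, so one has to check that no $\mathbf{L}_j$ (for $j \geq 1$) is proportional to $\mathbf{L}_0 = x_0$. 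If such a proportionality occurs, one absorbs the offending factor into the $x_0$ exponent by replacing $a_0$ with $a_0 + a_j$ and dropping that $L_j$ from the product, which preserves the setup and only modifies the constant $a_0$ used in defining $\varpi_0$.
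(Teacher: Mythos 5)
Your proposal is correct and follows essentially the same route as the paper: detect local solubility at primes outside $S$ via the frobenian functions $\varpi_j$, handle the places in $S\cup\{\infty\}$ by the implicit-function-theorem argument of \S\ref{sec:small_primes}, and then feed the resulting sum to Theorem~\ref{thm:frob_applications}. The "subtlety" you raise about $\mathbf{L}_0=x_0$ possibly being proportional to some $\mathbf{L}_j$ can only occur if some $L_j$ is constant; since each $L_j\in\ZZ[x_1,\dots,x_n]$ is a linear polynomial of degree one, its homogenisation has a nonzero part in $x_1,\dots,x_n$ and so is never proportional to $x_0$, which is why the paper leaves this implicit.
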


To complete the proof of Theorem \ref{thm:norms} it suffices to prove the following.

\begin{lemma} \label{lem:norms_finish}
	We have
	\begin{equation}	
		\dens(\mathcal{P}_j) = \delta_{D_j}(\pi) , \quad j \in \{0,\dots,r\},
	\end{equation}
	where $D_j$ is the hyperplane in $\PP^n$ determined by $\mathbf{L}_j$.
\end{lemma}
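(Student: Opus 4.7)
The plan is to compute $\dens(\mathcal{P}_j)$ and $\delta_{D_j}(\pi)$ as two densities of primes and match them by routing both through the density of primes at which fibres of $\pi$ are $\QQ_p$-soluble above a generic reduction onto $D_j$.

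First, I would translate $\dens(\mathcal{P}_j)$ via Chebotarev. Let $K$ be the Galois closure of $E_1 \cdots E_s$ over $\QQ$, set $G = \Gal(K/\QQ)$, and let $H_i \leq G$ be the subgroup with $E_i = K^{H_i}$. For $p \notin S$, the maximal subfields of $E_p = E \otimes_\QQ \QQ_p$ are the completions $E_{i,\wp}$, and their degrees $f_{i,\wp} = [E_{i,\wp}:\QQ_p]$ coincide with the orbit lengths of $\langle \Frob_p \rangle$ acting on $\Sigma := \coprod_i G/H_i$. Hence $\dens(\mathcal{P}_j)$ equals the proportion of $g \in G$ such that $\gcd_{O \in \Sigma/\langle g\rangle} |O|$ divides $a_j$.

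Next, I would use Lemma~\ref{lem:local_norm} to link $\mathcal{P}_j$ to local solubility. For $p \notin S$ and a primitive integer representative $\tilde y$ of a point $y \in \PP^n(\QQ_p)$ whose reduction modulo $p$ is a generic point of $D_j \bmod p$ (so that $v_p(L_j(\tilde y)) = 1$ and $v_p(L_i(\tilde y)) = 0$ for $i \neq j$), the fibre $W_{\tilde y}$ has a $\QQ_p$-point iff $\prod_i L_i(\tilde y)^{a_i}$ is a norm from $E_p$. By Lemma~\ref{lem:local_norm} this happens iff $\gcd_{i,\wp} f_{i,\wp}$ divides $v_p\bigl(\prod_i L_i(\tilde y)^{a_i}\bigr) = a_j$, i.e.\ iff $p \in \mathcal{P}_j$. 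Hence the density of primes for which local solubility holds above generic reductions onto $D_j$ equals $\dens(\mathcal{P}_j)$.

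Finally, I would run the standard Lang--Weil + Hensel argument (exactly as in the proof of Lemma~\ref{lem:detector}) to relate the same density to $\delta_{D_j}(\pi)$. For $p$ sufficiently large, $V_y(\QQ_p) \neq \emptyset$ for generic $y$ as above iff the geometric fibre of $\pi$ over the generic point of $D_j$ contains an irreducible component of multiplicity $1$ fixed by $\Frob_p$. The Galois action on these components factors through $G$ via the permutation action on $\Sigma$ (since $E \otimes \overline{\kappa(D_j)} \cong \overline{\kappa(D_j)}^{\Sigma}$, and generically $\kappa(D_j) \cap \bar\QQ = \QQ$), so by Definition~\ref{def:Delta} the density of such primes is $\delta_{D_j}(\pi)$. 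Comparing the two density computations yields $\dens(\mathcal{P}_j) = \delta_{D_j}(\pi)$. The main obstacle is the ``only if'' direction of the Lang--Weil step: one needs that absence of a $\Frob_p$-fixed multiplicity-$1$ component actually forces $V_y(\QQ_p) = \emptyset$ for large $p$ (not merely the trivial ``if'' direction); for the multinorm variety this is automatic because Lemma~\ref{lem:local_norm} gives an exact equivalence, so both sides of the biconditional are controlled by the same invariant $f_p$, closing the argument.
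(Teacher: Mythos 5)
Your approach is a detour through local solubility that reintroduces the precise difficulty the paper resolves by citation, and the resolution you offer for that difficulty does not hold up. You compute $\dens(\mathcal{P}_j)$ as a Chebotarev density (fine), relate $\mathcal{P}_j$ to local solubility of the fibre above a generic reduction onto $D_j$ via Lemma~\ref{lem:local_norm} (also fine), and then try to identify that density with $\delta_{D_j}(\pi)$ via a Lang--Weil biconditional. You correctly flag the ``only if'' direction as the obstacle, but your resolution --- ``automatic because Lemma~\ref{lem:local_norm} gives an exact equivalence, so both sides are controlled by the same invariant $f_p$'' --- is not a valid deduction. Lemma~\ref{lem:local_norm} gives you only (A) $V_y(\QQ_p) \neq \emptyset \iff p \in \mathcal{P}_j$, while Lang--Weil plus Hensel gives only (B$'$) if $\Frob_p$ fixes a multiplicity-$1$ component of $\pi^{-1}(D_j)$ then $V_y(\QQ_p) \neq \emptyset$. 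Combining these yields only $\delta_{D_j}(\pi) \leq \dens(\mathcal{P}_j)$. To get equality you need the reverse implication: the gcd condition forces $\Frob_p$ to fix a multiplicity-$1$ component. That is not a consequence of Lemma~\ref{lem:local_norm} (a purely local statement about norms from unramified \'etale algebras, with no reference to the multiplicity structure of the fibre of $\pi$); it is precisely the content of Theorem~5.4 of \cite{LS16}, which directly identifies the split-ness of $\pi^{-1}(D_j)$ over $\QQ_p$ with the condition $\gcd_{k_p \subset E_p}[k_p:\QQ_p] \mid a_j$. Your remark that the Galois action on components of $\pi^{-1}(D_j)$ factors through the permutation action on $\Sigma$ gestures at this identification, but without determining which components have multiplicity~$1$ and proving the factorization you have not closed the argument.

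The paper's proof is shorter and avoids the detour: it expresses $\delta_{D_j}(\pi)$ as a Chebotarev density, notes that $\Frob_p$ fixing a multiplicity-$1$ component is the same as $\pi^{-1}(D_j)$ being split over $\QQ_p$, invokes \cite[Thm.~5.4]{LS16} to identify the latter with the gcd condition, and observes that this is the defining condition of $\mathcal{P}_j$. You should either cite that theorem or supply a proof of its content; it is the genuinely missing ingredient in your argument.
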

\begin{proof}
 	The proof is inspired by the proof of \cite[Thm.~5.5]{LS16}.
 	Choose a finite Galois extension $k/ \QQ$ which contains the splitting fields of the $E_i$.
 	Let $\Gamma = \Gal(k/\QQ)$. Then the Chebotarev density theorem implies that
 	$$\delta_{D_j}(\pi) = \dens\left( \text{primes } p: 
 	\begin{array}{ll}
 		\Frob_p \in \Gal(k/\QQ) \text{ fixes some multiplicity $1$} \\
 		\text{geometric irreducible component of }\pi^{-1}(D_j)
 	\end{array}
 	\right).$$
 	However $\Frob_p$ fixes some multiplicity $1$ geometric irreducible
 	component if and only if the fibre $\pi^{-1}(D_j)$ is split over $\QQ_p$.
 	As the divisors $D_j$ are geometrically integral, \cite[Thm.~5.4]{LS16} shows
 	that this happens if and only if 
 	$\gcd_{k_p \subset E_p} [k_p : \QQ_p]$ divides $a_j,$
 	where the greatest common divisor is  over  all maximal subfields $k_p$ of 
 	$E_p = E \otimes_\QQ \QQ_p$. The lemma now follows from the definition of $\mathcal{P}_j$.
\end{proof}

This completes the proof of Theorem \ref{thm:norms}. \qed

\section{Multiple fibres} \label{sec:multiple}

We finish with the proof Theorem \ref{thm:double}, using the method from \cite[\S 2]{CTSSD97}.
Let $V$ be a smooth projective variety over a number field $k$
equipped with a morphism $\pi:V \to \PP^1$
whose generic fibre is geometrically integral. 
We assume that $\pi$ has at least $6$ double fibres over $\bar{k}$. 
(We say that $\pi$ has a \emph{double fibre} over a point $P \in \PP^1$
if $\pi^*P = 2D$ for some divisor $D$ on $V$.)

To prove the result, we may assume that the fibre at infinity is smooth. Moreover, we are 
free to pass to a finite field extension of $k$, so that we may assume that every 
double fibre over $\bar{k}$ is actually defined over $k$.

Choose a squarefree polynomial $f \in k[x]$ of degree $6$
such that the fibre over every root of $f$ is a double fibre.
For $\alpha \in k^*/k^{*2}$
we denote by $C_\alpha$ the hyperelliptic curve  $a y^2 = f(x)$,
for some representative $a \in k^*$ of $\alpha$. Let
$\mathcal{T}_{\alpha}$ be the normalisation of $V \times_{\PP^1} C_\alpha$, so that we obtain the commutative diagram
\begin{equation} \label{eqn:torsor}
\begin{split}
\xymatrix{
\mathcal{T}_{\alpha} \ar[d]^{\pi_\alpha} \ar[r]^{\tau_\alpha} & V \ar[d]^\pi  \\ 
C_{\alpha} \ar[r]^{w_\alpha} & \PP^1.
}
\end{split}
\end{equation}
By \cite[Rem.~2.1.1]{CTSSD97} the map $\tau_\alpha$ is a $\ZZ/2\ZZ$-torsor. Moreover, let $U \subset \PP^1$ be the complement of the singular locus of $\pi$. Then the diagram \eqref{eqn:torsor} is cartesian on restricting to $U$, since the fibre product is smooth above $U$.

Let now $S$ be a finite set of places of $k$ and let $k^S = \prod_{v \notin S} k_v$.
Let $x \in U(k) \cap \pi(V(k^S))$. Choose $\alpha \in k^*/k^{*2}$ such that $x \in w_{\alpha}(C_{\alpha}(k))$
(e.g.~$\alpha = f(x)$). As the diagram \eqref{eqn:torsor} is cartesian over $U$,
we see that the fibre $\pi_{\alpha}^{-1}(w_{\alpha}^{-1}(x))$ has a $k^S$-point.
In particular $\mathcal{T}_\alpha(k^S) \neq \emptyset$.

As $V$ is projective and $\tau_\alpha$ is a $\ZZ/2\ZZ$-torsor, there exists a finite set $A_S \subset k^*/k^{*2}$
such that $\mathcal{T}_\alpha(k^S) = \emptyset$ for all $\alpha \notin A_S$ (see \cite[Prop.~5.3.2]{Sko01}).
It follows that 
$$
	\{x \in \PP^1(k): x \in \pi(V(k^S))\}  \subset
	(\PP^1 \setminus U)(k) \bigcup_{\alpha \in A_S} \{x \in \PP^1(k): x \in w_\alpha(C_{\alpha}(k))\}.
$$
As $\deg f = 6$, each $C_\alpha$ is a hyperelliptic
curve of genus $2$. Therefore each $C_\alpha(k)$ is finite by Faltings's theorem \cite{Fal83}.
Theorem \ref{thm:double} follows. \qed

\end{document}